\newtheorem{theorem}{Theorem}
\newtheorem{cor}[theorem]{Corollary}
\newtheorem{definition}[theorem]{Definition}
\newtheorem{lemma}[theorem]{Lemma}
\newtheorem{proposition}[theorem]{Proposition}
\numberwithin{equation}{section}
\numberwithin{theorem}{section}
\numberwithin{figure}{section}
\theoremstyle{remark}
\newtheorem{remark}[theorem]{Remark}
\newtheorem{assumption}[theorem]{Assumption}
\def\R{{\mathbb{R}}}
\def\C{{\mathbb{C}}}
\def\N{{\mathbb{N}}}
\def\X{\mathcal{X}}
\def\A{\mathcal{A}}
\def\T{\mathcal{T}}
\def\D{\mathcal{D}}
\def\Re{{\rm Re}\,}
\def\su{D_1}
\def\sv{D_2}
\def\RL{\prescript{{RL}}{}}
\newcommand{\hidden}[1]{}
\title[Fractional reaction diffusion systems]{Tools for stability analysis of fractional reaction diffusion systems}
\author[S. Ahmad]{Sofwah Ahmad} 
\address[S. Ahmad]{Department of Mathematics, College of Computing and Mathematical Sciences, Khalifa University of Science and Technology, P.O. Box 127788, Abu Dhabi, UAE\\ 
\href{https://orcid.org/0000-0001-7641-7759}{orcid.org/0000-0001-7641-7759}}
\email{100059797@ku.ac.ae, alaydrus.sofwah@gmail.com}
\author[S. Cygan]{Szymon Cygan} 
\address[S. Cygan]{	Instytut Matematyczny, Uniwersytet Wroc\l{}awski, pl. Grunwaldzki 2/4, \hbox{50-384} Wroc\l{}aw, Poland \\ 
\href{https://orcid.org/0000-0002-8601-829X}{orcid.org/0000-0002-8601-829X}}
\email{szymon.cygan@math.uni.wroc.pl}
\urladdr {http://scygan.math.uni.wroc.pl}
\author[G. Karch]{Grzegorz Karch} 
\address[G. Karch]{	
Instytut Matematyczny, Uniwersytet Wroc\l{}awski, pl. Grunwaldzki 2/4, \hbox{50-384} Wroc\l{}aw, Poland \\ 
\href{https://orcid.org/0000-0001-9390-5578}{orcid.org/0000-0001-9390-5578}}
\email{grzegorz.karch@math.uni.wroc.pl}
\urladdr {http://karch.math.uni.wroc.pl}
\date{\today}
\subjclass{35R11, 35B35, 35K57, 35B40, 34G20}
\keywords{linearization principle, fractional reaction-diffusion systems, stability, Turing instability}
\begin{document}

    \begin{abstract}
The linearization principle states that the stability (or instability) of solutions to a suitable linearization of a nonlinear problem implies the stability (or instability) of solutions to the original nonlinear problem. In this work, we prove this principle for 
solutions of abstract fractional reaction-diffusion equations with a fractional derivative in time of order $\alpha\in (0,1)$. Then, we apply these results to particular fractional reaction-diffusion equations, obtaining, for example, the counterpart of the classical Turing instability in the case of fractional equations.
    \end{abstract}

    \maketitle

\tableofcontents
\newpage
\section{Introduction}
\subsection{Revisiting the Malthusian growth model}
Denote by $u_n$ the total population (the number of species) at time $n\in\mathbb{N}$. The classical discrete  Malthusian growth model states that
    \begin{align}
        \label{eq:Malth}
        u_n = u_{n-1} + r u_{n-1},
    \end{align}
where $r\in \mathbb{R}$ is a given growth rate. 
Assuming that $u_n = u(nh)$ for some differentiable function $u= u(t)$, with arbitrary small $h>0$, and for the growth rate $r = r_0 h$
with fixed $r_0\in\R$,
after passing to the limit $h\to 0$,
we obtain the continuous version of the Malthusian growth model
    \begin{align}\label{eq:Malth:2}
       \frac{d}{dt} u = r_0 u \quad \text{with the solution} \quad u(t)= u(0) e^{r_0t}.
    \end{align}

Now, we rewrite the Malthusian model \eqref{eq:Malth} by direct calculation in the following form
    \begin{align} \label{eq:Malth:1}
        u_n &= u_0 + \sum_{k=0}^{n-1} r u_k = u_0 + \sum_{k=1}^{n} r u_{k-1} \\
        &= u_0 + \sum_{k=1}^{n}  \left(r u_0 + \sum_{j=1}^{k-1} r\left( u_j - u_{j-1}\right)\right),
    \end{align}
    with the convention that for $n=1$, we set $\sum_{k=1}^n \sum_{j=1}^{k-1} r(u_{j}-u_{j-1})=0.$
Notice that we have the following components in formula \eqref{eq:Malth:1}
    \begin{itemize}
        \item $u_0$ -- the initial number of the species,
        \item $r u_0$ -- the number of the descendants (offspring) of the initial population born at every unit of time, 
        \item $u_j - u_{j-1}$ -- number of descendants (offspring) born at $j$-th unit of time (\textit{called $j$-th generation})
        \item $r (u_{j} - u_{j-1})$ -- the number of descendants of the $j$-th generation born after every unit of time,
        \item $ru_0 + \sum_{j=1}^{k-1} r\left( u_j - u_{j-1}\right)$ total number of descendants (offspring) born at $k$-th unit of time.
    \end{itemize}

In equation \eqref{eq:Malth:1},  all generations have the same growth rate $r\in\R$. 
We generalize this model by taking into account that the growth rate of each individual may depend on its age.
More precisely, we introduce numbers $r_k\in \mathbb{R}$ (with $k\in \mathbb{N}$) which are the growth rates of a generation after $k$ units of time (years, days, \textit{etc.}) and propose the following model
    \begin{equation}
        \begin{aligned}
            u_n &= u_0 + \sum_{k=1}^{n} \left(r_k u_0  + \sum_{j=1}^{k-1} r_{k-j}\left(u_j - u_{j-1}\right) \right),
            \label{eq:ModelGen}
        \end{aligned}
    \end{equation}
    where, as in equation \eqref{eq:Malth:1}, for $n=1$, we choose $\sum_{k=1}^n \sum_{j=1}^{k-1} r_{k-j}(u_{j}-u_{j-1})=0$.
Formula \eqref{eq:ModelGen} means that the following numbers contribute to  
the total population $u_n$:
    \begin{itemize}
        \item $u_0$ -- the number of the species at the beginning,
        \item $r_k u_0$ -- the number of the descendants of the initial population born at $k$-th unit of time, 
        \item $r_{k-j} (u_{j} - u_{j-1})$ -- the number of descendants of the $j$-th generation born at $k$-th unit of time,
        \item $r_ku_0 + \sum_{j=1}^{k-1} r_{k-j}\left( u_j - u_{j-1}\right)$ total number of descendants born at $k$-th unit of time.
    \end{itemize}

Changing the order of summation in equation \eqref{eq:ModelGen} we obtain
    \begin{equation}
        \begin{aligned}
            u_n &= u_0 + \left(\sum_{k=1}^{n} r_k u_0 \right) + \sum_{j=1}^{n-1} \left(\sum_{k=j+1}^{n} r_{k-j}\left(u_j - u_{j-1}\right) \right) \\
            &=  u_0 + \left(\sum_{k=1}^{n} r_k u_0 \right) + \sum_{j=1}^{n-1}\left( \left(u_j - u_{j-1}\right)  \sum_{k=1}^{n-j} r_{k} \right).  
            \label{eq:ModelGen2}
        \end{aligned}
    \end{equation}

Denoting $S_0=0$ and $S_k = \sum_{j=1}^k r_j$ for $k\ge 1$ we rewrite  formula \eqref{eq:ModelGen2} in the form
    \begin{equation}
        \begin{split}
        \label{eq:ModelSum}
        u_n &= u_0 + u_0 S_{n} + \sum_{k=1}^{n-1} (u_k - u_{k-1}) S_{n-k}\\& = u_0 + \sum_{k=0}^{n-1} \left(S_{n-k} - S_{n-k-1}\right)u_k.
     \end{split}
    \end{equation}
Note that $S_{n-k} - S_{n-k-1}=r_{n-k}$, and equation \eqref{eq:ModelSum} takes the form
\begin{equation}
    u_n = u_0 + \sum_{k=0}^{n-1} r_{n-k}u_k.
\end{equation}
Thus, it would seem that we could obtain this equation directly by replacing $r$ with $r_{n-k}$ in the first equation in \eqref{eq:Malth:1}.
However, to give a biological motivation for the model obtained in this way, we should still go through 
formula \eqref{eq:ModelGen} and 
the calculations in \eqref{eq:ModelGen2}
involving the contribution of $j$-th generations to the total population.

In order to obtain a continuous counterpart of model \eqref{eq:ModelSum}, we assume that $u(t)$ is a continuous function and $S(t)$ is continuously differentiable. 
For $t>0$, we introduce the numbers 
    \begin{align}
            h=t/n \qquad \text{and}\qquad 
            t_k = kh \quad \text{for} \quad k\in\lbrace 1, \ldots, n-1\rbrace.
    \end{align}
Thus, for $u_k = u(t_k)$ and $S_k = S(t_k)$, equation \eqref{eq:ModelSum} takes the form
    \begin{align}
        \label{eq:ModelSumCont}
        u(t) &= u_0 + \sum_{k=1}^{n-1} \big(S(t - t_k) - S(t - t_k - h)\big)u_k\\ & = u_0 + \sum_{k=1}^{n-1} \frac{S(t - t_k) - S(t - t_k - h)}{h} \, u_k h, 
    \end{align}
where the right-hand side is the Riemann approximation of a certain integral. Indeed, by passing to the limit with $h\to 0$,
we obtain the continuous counterpart  of model \eqref{eq:ModelSum} (hence also of model \eqref{eq:ModelGen})
    \begin{align}
        u(t) = u(0) + \int_0^t S'(t-s) u(s) \, ds.
    \end{align}

Notice that by the definition of $S_k$ in formula \eqref{eq:ModelSum}, the quantity 
$S'(\tau)$ can be interpreted as a growth rate of $\tau$-th generation ({\it i.e.} the population which was born exactly $\tau$-time ago).  
Here, we discuss some particular versions of this function. 
If $S(\tau) = r_0 \tau$, we obtain the equation
    \begin{align}
        u(t) = u(0) + r_0 \int_0^t u(s)\,  d s,
    \end{align}
which is the integral formulation of the classical model \eqref{eq:Malth:2}. 
Similarly, the step-like function \begin{align}
        S(\tau) =  \begin{cases}
            0, & \text{for} \quad \tau \in [0, \ t^*), \\
            r_0 (\tau-t^*), & \text{for} \quad \tau \in [t^*, \ \infty),
        \end{cases}
    \end{align}
leads the equation  the integral formulation of the time delay equation
    \begin{align}
        u(t) = u(0) + r_0\int_0^{t-t^*} u(s)\,  ds,
    \end{align}
corresponding the Cauchy problem for the time delay equation $u'(t)=r_0 u(t-t^*)$.

In this work,  we choose the particular function 
    \begin{equation}
        S(\tau) = r_0  \frac{\tau^\alpha}{\alpha \Gamma(\alpha)} \quad \text{with}\quad  \alpha \in (0,1) \quad\text{and}\quad  r_0\in\R,
    \end{equation}
to obtain the Volterra equation
    \begin{align}\label{Volterra:0}
        u(t) = u(0) +  \frac{r_0}{\Gamma(\alpha)}\int_0^t (t-s)^{\alpha-1} u(s) \, ds , 
    \end{align}
which appears to have an equivalent formulation as a fractional differential equation. 

\subsection{Fractional derivatives} 
In order to write integral equation \eqref{Volterra:0} as a fractional differential equation, we briefly recall some well-known facts   (see, {\it e.g.}, \cite{KilbasSrivastavaTrujillo}) 
from  the theory of fractional-in-time derivatives.

Let $\alpha \in (0, 1]$. In the following,  $u=u(t)$ represents an arbitrary $C^1$-function defined for $t\ge 0$.
The Riemann-Liouville fractional integral 
    \begin{align}\label{fractional integral}
        J_t^{\alpha} u(t) \equiv
        \frac{1}{\Gamma(\alpha)}\int_{0}^{t} (t-\tau)^{\alpha-1} u(\tau)\, d\tau,
    \end{align}
    and the Caputo fractional derivative 
    \begin{equation}
        \partial^\alpha_t u(t) = J_t^{1-\alpha} \left(\frac{d}{dt} u\right)(t)=\frac{1}{\Gamma(1-\alpha)} \int_{0}^{t} (t-\tau)^{- \alpha} u'(\tau)\, d\tau,
    \end{equation}
are related by the formula
     \begin{equation}
         \partial_t^{\alpha} J_t^{\alpha} u(t) =u(t).
     \end{equation}
Thus, applying the fractional derivative $ \partial_t^{\alpha}$ to equation \eqref{Volterra:0}  written in the form $u(t)=u(0)+r_0J_t^\alpha (u(t))$ 
we obtain the fractional differential equation 
    \begin{align}
        \partial_t^\alpha u(t) = r_0u(t),
    \end{align}
which we discuss in the next section.


\section{Fractional linear Cauchy problem} \label{sec:linear}
Here, we recall results on linear equations with Caputo fractional-in-time derivatives of order $\alpha\in (0,1)$,
which will be needed in our considerations.

\subsection{Fractional differential equations} The simple fractional differential equation
        \begin{align}
            \partial^\alpha_t u &= \lambda u,
            \label{eq: FDE simple:0}
        \end{align}
 with $\alpha \in (0,1]$, a parameter $\lambda \in \mathbb{C}$, and  supplemented with an initial datum $u(0)=u_0$ has 
the solution in the following form 
        \begin{align}
            u(t) = u_0 \ E_{\alpha}(\lambda t^\alpha), \quad\text{where}\quad E_{\alpha}(z)=E_{\alpha,1}(z)
        \label{eq: sol D^a u=ru:0}
        \end{align}
         belongs to the  family of the two parameters Mittag-Leffler functions (see, {\it e.g.},~\cite{diethelm2010analysis,GKMR20})
    given by the formula
        \begin{equation}\label{Mittag-Leffler}
            E_{\alpha,\beta}(z) = \sum_{k=0}^\infty \frac{z^k}{\Gamma(\alpha k +\beta)}, \qquad \text{for}\quad \alpha,\beta,z \in \mathbb{C}\;\;\text{with} \;\; \Re \alpha>0.
        \end{equation}
Here, we emphasize that the functions $E_\alpha$ and $E_{\alpha,\alpha}$ with $\alpha\in (0,1]$ appear when solving 
 the Cauchy problem 
\begin{align}
            \partial^\alpha_t u &= \lambda u+f(t),\\
            u(0)&=u_0,
            \label{eq:inhom}
        \end{align}
with 
$\alpha \in (0,1]$, 
$\lambda \in \C$, and  $f\in C\big([0,\infty)\big)$,
which has the explicit solution 
    \begin{equation}\label{Duh:0}
        u(t) = E_\alpha(\lambda t^\alpha) u_0 + \int_0^t (t-s)^{\alpha-1} E_{\alpha,\alpha}\big(\lambda (t-s)^\alpha\big) f(s) \, d s.
    \end{equation}
Moreover, both functions are related by the formula
$$ E_\alpha (\lambda t^\alpha) = J^{1-\alpha}_t\big( t^{\alpha - 1} E_{\alpha, \alpha} (\lambda t^\alpha)\big) $$
and they
satisfy the following
initial value problems (see, {\it e.g.}, \cite[Thm.~7.2 and Rem.~7.1]{diethelm2010analysis} or \cite[Sec.~7.2.1]{GKMR20})
    \begin{equation}\label{Ea:Cauchy}
        \begin{split}
            \partial_t^\alpha E_\alpha (\lambda t^\alpha) &= \lambda E_\alpha (\lambda  t^\alpha),\\
            E_\alpha(0)&=1
        \end{split}
    \end{equation}
        and
    \begin{equation}\label{Eaa:Cauchy}
        \begin{split}
            \RL\partial_t^\alpha \left( t^{\alpha - 1} E_{\alpha, \alpha} (\lambda t^\alpha) \right) &= \lambda t^{\alpha - 1} E_{\alpha, \alpha} (\lambda t^\alpha),
            \\
            \lim_{t\to 0} J^{1-\alpha}_t\left( t^{\alpha - 1} E_{\alpha, \alpha} (\lambda t^\alpha)\right)&=1,
        \end{split}
    \end{equation}
where $\RL\partial_t^\alpha$ denotes the Riemann-Liouville fractional derivative of order $\alpha\in (0,1)$ given by the formula
    \begin{equation}\label{RL:derivative}
        \RL\partial^\alpha_t u(t) = \frac{d}{dt} J_t^{1-\alpha}  u(t)=\frac{1}{\Gamma(1-\alpha)} 
        \frac{d}{dt} \int_{0}^{t} (t-\tau)^{- \alpha} u(\tau) d\tau.
    \end{equation}

    The Mittag-Leffler functions can also be represented as the Laplace transforms of the classical Wright function (see, for example,
    \cite[Ch.~F.2]{GKMR20} for other properties of the Wright function and for additional references).
    \begin{lemma} 
    \label{lem:E:E}
        For every $\alpha\in (0,1)$ and $z\in \C$
            \begin{equation} \label{subord:ML}
                E_\alpha(z) = \int_0^\infty \Psi_\alpha (s) e^{zs}\, ds \quad  \text{and} \quad
                E_{\alpha,\alpha} (z) = \int_0^\infty \alpha s \Psi_\alpha (s) e^{zs}\, ds,
            \end{equation}
        with the function of the Wright-type
            \begin{equation}\label{Wright}
                \Psi_\alpha(z)= \sum_{n=1}^\infty \frac{(-z)^n}{n!\Gamma(-\alpha n+1-\alpha)},
            \end{equation}
        which has the following properties: 
            \begin{equation*}
                \Psi_\alpha(s)\geq 0\quad \text{for} \quad s>0 \quad \text{and}\quad \int_0^\infty \Psi_\alpha (s) \, ds=1.
            \end{equation*}
    \end{lemma}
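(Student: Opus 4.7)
The strategy is to reduce both identities to a single moment computation for $\Psi_\alpha$. Once I know that $\Psi_\alpha$ decays sufficiently fast on $(0,\infty)$ so that all its moments are finite and the stated Laplace integrals converge absolutely for every $z \in \C$, expanding $e^{zs} = \sum_{k\geq 0}(zs)^k/k!$ and swapping sum with integral via Fubini yields
\begin{equation}
\int_0^\infty \Psi_\alpha(s) e^{zs}\,ds = \sum_{k=0}^\infty \frac{z^k}{k!}\int_0^\infty s^k \Psi_\alpha(s)\,ds.
\end{equation}
Comparing with \eqref{Mittag-Leffler} shows that the first equality in \eqref{subord:ML} is equivalent to the moment identity
\begin{equation}
\int_0^\infty s^k \Psi_\alpha(s)\,ds = \frac{k!}{\Gamma(\alpha k+1)}, \qquad k\geq 0,
\end{equation}
whose $k=0$ case recovers the normalization $\int_0^\infty \Psi_\alpha(s)\,ds = 1$. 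The second identity follows from the same computation: the extra factor $\alpha s$ produces $\alpha(k+1)!/\Gamma(\alpha(k+1)+1) = k!/\Gamma(\alpha k+\alpha)$ after one use of the recursion $\Gamma(\alpha k+\alpha+1)=(\alpha k+\alpha)\Gamma(\alpha k+\alpha)$, which matches the power series of $E_{\alpha,\alpha}$.

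To establish the moment identity I would start from the Hankel contour representation
\begin{equation}
\Psi_\alpha(s) = \frac{1}{2\pi i}\int_{Ha}\sigma^{\alpha-1}e^{\sigma - s\sigma^\alpha}\,d\sigma,
\end{equation}
which is equivalent to \eqref{Wright}: expanding $e^{-s\sigma^\alpha}$ and applying the classical Hankel formula $\tfrac{1}{2\pi i}\int_{Ha}e^{\sigma}\sigma^{-\beta}\,d\sigma = 1/\Gamma(\beta)$ term by term recovers the given power series. Multiplying by $s^k$, interchanging the Hankel integral with $\int_0^\infty s^k e^{-s\sigma^\alpha}\,ds = k!\,\sigma^{-\alpha(k+1)}$, and applying the Hankel formula once more produces $k!/\Gamma(\alpha k+1)$, as required.

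Nonnegativity of $\Psi_\alpha$ on $(0,\infty)$ I would obtain from the same contour representation by collapsing the Hankel contour onto the cut along the negative real axis: the resulting real integrand has constant sign. Alternatively one invokes the identification of $\Psi_\alpha$ with Mainardi's $M$-Wright function, i.e.\ the density of the inverse $\alpha$-stable subordinator at time one. The super-exponential decay of $\Psi_\alpha(s)$ assumed at the start, which is what lets the Laplace integrals converge for all $z\in\C$, follows from a steepest-descent analysis of the same Hankel integral.

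The main technical obstacle is the rigorous justification of the contour manipulations, and in particular the interchange between the Hankel integral and $\int_0^\infty s^k e^{-s\sigma^\alpha}\,ds$; this requires uniform growth estimates of $|\sigma^{\alpha-1}e^{\sigma - s\sigma^\alpha}|$ along a suitably chosen representative of $Ha$. Once those estimates are in place, the remainder of the argument is bookkeeping with $\Gamma$-function identities.
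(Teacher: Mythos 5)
The paper itself does not prove this lemma: it is quoted from the literature (the text points to \cite[Ch.~F.2]{GKMR20}), so there is no internal argument to compare against. Your outline essentially reconstructs the standard textbook proof, and the Laplace-transform part of it is sound: expanding $e^{zs}$, reducing to the moments $\int_0^\infty s^k\Psi_\alpha(s)\,ds=k!/\Gamma(\alpha k+1)$, and the $\Gamma$-recursion giving $\alpha(k+1)!/\Gamma(\alpha k+\alpha+1)=k!/\Gamma(\alpha k+\alpha)$ for $E_{\alpha,\alpha}$ are all correct. The interchange of the Hankel integral with $\int_0^\infty s^k e^{-s\sigma^\alpha}\,ds$ that you flag is indeed the technical point, and it is fixable exactly as you anticipate: take the Hankel rays at angle $\theta$ with $\pi/2<\theta<\pi/(2\alpha)$ (possible since $\alpha<1$), so that $e^{\sigma}$ decays while $\Re\sigma^\alpha>0$ on the contour, making the $s$-integral and the interchange legitimate. (One side remark: your Hankel expansion produces the series starting at $n=0$, i.e.\ the standard Mainardi $M$-function; that is the normalization actually needed for $\int_0^\infty\Psi_\alpha=1$, and the $n\ge1$ lower limit in \eqref{Wright} should be read accordingly.)

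The one genuine gap is the nonnegativity argument. Collapsing the Hankel contour onto the cut gives
\begin{equation}
\Psi_\alpha(s)=\frac{1}{\pi}\int_0^\infty r^{\alpha-1}e^{-r-s r^\alpha\cos(\pi\alpha)}\,
\sin\bigl(\pi\alpha-s r^\alpha\sin(\pi\alpha)\bigr)\,dr,
\end{equation}
and the sine factor oscillates in $r$ for every fixed $s>0$, so the integrand does \emph{not} have constant sign; positivity of $\Psi_\alpha$ cannot be read off this representation. Nonnegativity of the $M$-Wright function is genuinely nontrivial: the accepted proofs go through your ``alternative'' route, namely the identification of $\Psi_\alpha$ with the density of the inverse $\alpha$-stable subordinator (equivalently, via complete monotonicity of $e^{-\lambda^\alpha}$ and Bernstein's theorem applied to the one-sided stable density). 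Since the lemma is in any case a quoted classical fact, invoking that identification with a reference is acceptable, but the contour-sign argument should be dropped or replaced, as it would fail if one tried to carry it out.
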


Finally, we recall the well-known asymptotic properties of the considered Mittag-Leffler functions.

\begin{lemma}\label{lem:ML}
Let $\alpha \in (0,1]$ and $\lambda \in \mathbb{C}$.
There exist positive real numbers $m(\alpha, \lambda)>0$ such that the following inequalities hold for all $t>0$. 
    \begin{enumerate}
        \item If $|\arg \lambda| >\alpha\pi/2 $, then
            \begin{align}\label{Ealpha:Ealphaalphs:estimates}
                \left|E_\alpha(\lambda t^\alpha)\right|&\le m(\alpha, \lambda) \min\{t^{-\alpha},1\},
                \\
                \left|t^{\alpha-1}E_{\alpha,\alpha}(\lambda t^\alpha)\right|&\le m(\alpha, \lambda) \min\{t^{-\alpha-1},t^{\alpha-1}\}.
            \end{align}
           
        \item If $|\arg \lambda| <\alpha\pi/2$, then 
            \begin{align}
                \left|E_\alpha(\lambda t^\alpha)- \frac{1}{\alpha} \exp(\lambda^{1/\alpha}t) \right|&\le m(\alpha, \lambda) \min\{t^{-\alpha},1\},
                \label{Ea:growth}
                \\
                \left|t^{\alpha-1}E_{\alpha,\alpha}(\lambda t^\alpha)- \frac{1}{\alpha} \lambda^{1/\alpha-1}\exp(\lambda^{1/\alpha}t) \right|&\le m(\alpha, \lambda) \min\{t^{-\alpha-1},t^{\alpha-1}\}.
            \end{align}
        Moreover, there exist numbers $T_1=T_1(\alpha,\lambda)>0$ and $C=C(\alpha,\lambda)>0$ such that
         \begin{equation}\label{Ea:lower}
                C(\alpha,\lambda) \exp\big((\Re \lambda)^{1/\alpha}t\big) \le\left|E_\alpha(\lambda t^\alpha)\right| \qquad\text{for all} \quad
                t\geq T_1.
            \end{equation}
    \end{enumerate}
\end{lemma}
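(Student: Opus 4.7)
The plan is to reduce both statements to the classical Wiman-type asymptotic expansions of the two-parameter Mittag-Leffler function as $|z|\to\infty$, combined with its continuity at $z=0$ to handle small $t$. Recall that for $\alpha\in(0,1)$, any $\mu\in(\alpha\pi/2,\pi)$, and each integer $N\ge 1$ one has (see, e.g., \cite[Ch.~F.3]{GKMR20})
\begin{equation*}
E_{\alpha,\beta}(z) = \frac{1}{\alpha}\, z^{(1-\beta)/\alpha} \exp(z^{1/\alpha}) - \sum_{k=1}^{N} \frac{z^{-k}}{\Gamma(\beta-\alpha k)} + O(|z|^{-N-1}) \quad \text{for } |\arg z| \le \mu,
\end{equation*}
\begin{equation*}
E_{\alpha,\beta}(z) = -\sum_{k=1}^{N} \frac{z^{-k}}{\Gamma(\beta-\alpha k)} + O(|z|^{-N-1}) \quad \text{for } \mu \le |\arg z| \le \pi,
\end{equation*}
and the substitution $z=\lambda t^\alpha$ preserves the argument, since $t^\alpha>0$.

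For part (1), fix $\mu$ strictly between $\alpha\pi/2$ and $|\arg\lambda|$, placing $z=\lambda t^\alpha$ in the algebraic sector. Taking $\beta=1$, $N=1$, we obtain $|E_\alpha(\lambda t^\alpha)|\le Ct^{-\alpha}$ for large $t$. For $E_{\alpha,\alpha}$ the leading $k=1$ term vanishes, since $1/\Gamma(\alpha-\alpha)=1/\Gamma(0)=0$; hence $|E_{\alpha,\alpha}(\lambda t^\alpha)|\le Ct^{-2\alpha}$ and multiplying by $t^{\alpha-1}$ gives the claimed $Ct^{-\alpha-1}$ decay. For small $t$ we use that $E_\alpha(0)=1$ is bounded and that $t^{\alpha-1}E_{\alpha,\alpha}(\lambda t^\alpha)\sim t^{\alpha-1}/\Gamma(\alpha)$ as $t\to 0^+$. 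Combining the two regimes produces the stated $\min$-form bounds.

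For part (2), any $\mu>\alpha\pi/2$ works (since $|\arg\lambda|<\alpha\pi/2$), placing us in the exponential sector. With $\beta=1$ the first expansion gives directly $|E_\alpha(\lambda t^\alpha)-\frac{1}{\alpha}\exp(\lambda^{1/\alpha}t)|\le Ct^{-\alpha}$ for large $t$. With $\beta=\alpha$ the leading prefactor equals $\frac{1}{\alpha}(\lambda t^\alpha)^{(1-\alpha)/\alpha}=\frac{1}{\alpha}\lambda^{1/\alpha-1}t^{1-\alpha}$, and multiplying by $t^{\alpha-1}$ reproduces the claimed leading term $\frac{1}{\alpha}\lambda^{1/\alpha-1}\exp(\lambda^{1/\alpha}t)$ with remainder of order $t^{-\alpha-1}$. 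The small-$t$ part follows from continuity as before. For the lower bound \eqref{Ea:lower}, the assumption $|\arg\lambda|<\alpha\pi/2$ forces $|\arg\lambda^{1/\alpha}|<\pi/2$, so $\Re(\lambda^{1/\alpha})>0$; the exponential term therefore dominates the $O(t^{-\alpha})$ remainder as soon as $t\ge T_1$, producing $|E_\alpha(\lambda t^\alpha)|\ge C\exp(\Re(\lambda^{1/\alpha})t)$, which is the form of \eqref{Ea:lower} (the symbol $(\Re\lambda)^{1/\alpha}$ being understood as the exponential growth rate $\Re(\lambda^{1/\alpha})$).

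The only subtlety beyond quoting the correct Wiman expansion is to notice the cancellation $1/\Gamma(0)=0$, which is responsible for the sharper $t^{-\alpha-1}$ decay of $t^{\alpha-1}E_{\alpha,\alpha}(\lambda t^\alpha)$ in part (1); everything else amounts to routine extraction of leading terms and bookkeeping of remainders.
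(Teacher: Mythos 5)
Your proof is correct and takes essentially the same route as the paper: the paper's proof likewise rests on the two Wiman-type asymptotic expansions of $E_{\alpha,\beta}$ in the exponential and algebraic sectors (deferring the bookkeeping, including the $1/\Gamma(0)=0$ cancellation that yields the sharper $t^{-\alpha-1}$ decay, to \cite[Lemma A1]{DiethelmSiegmundTuan2017}), combined with boundedness near $t=0$. Two small remarks. First, the statement allows $\alpha=1$, which your expansions (quoted for $\alpha\in(0,1)$) do not cover; the paper dispatches that case in one line via $E_1(z)=E_{1,1}(z)=e^{z}$. Second, concerning \eqref{Ea:lower}: what you prove is a lower bound with rate $\Re\big(\lambda^{1/\alpha}\big)$, and your reading of the statement is the appropriate one. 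Indeed, writing $\lambda=re^{i\theta}$ with $|\theta|<\alpha\pi/2$, concavity of $\ln\cos$ gives $(\cos\theta)^{1/\alpha}\ge\cos(\theta/\alpha)$, i.e. $(\Re\lambda)^{1/\alpha}\ge\Re\big(\lambda^{1/\alpha}\big)$ with equality only for real $\lambda>0$, so the literal exponent $(\Re\lambda)^{1/\alpha}$ would overstate the actual growth of $|E_\alpha(\lambda t^\alpha)|\sim\frac1\alpha e^{\Re(\lambda^{1/\alpha})t}$ for non-real $\lambda$; note that the paper's own argument (``\eqref{Ea:lower} follows from Part (2) since $\Re\lambda>0$'') delivers exactly the same $\Re\big(\lambda^{1/\alpha}\big)$ rate that you obtain.
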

\begin{proof}
 For $\alpha=1$, the estimates are true because $E_1(z)=E_{1,1}(z)=\exp(z)$.
 To prove them for $\alpha \in (0,1)$, one should use the following asymptotic expansions of the Mittag-Leffler function (see, {\it e.g.}, \cite[Sec.~4.7]{GKMR20})
 which hold true for all $p \in \mathbb{N}$:
        \begin{itemize}
            \item if $ z \in \mathbb{C}$ with $|\arg(z)|<\alpha\pi/2$ then
                \begin{align}
                    E_{\alpha, \beta}(z)&=\frac{1}{\alpha} z^{(1-\beta) / \alpha} \exp \left(z^{1 / \alpha}\right)-\sum_{k=1}^p \frac{z^{-k}}{\Gamma(\beta-\alpha k)}+O\left(|z|^{-p-1}\right) \quad\text{when} \quad |z|\to\infty;
                \label{eq: asymptotics Ealphabeta unstable:0}
                \end{align}
                
            \item if $ z \in \mathbb{C}$ with $|\arg(z)|>\alpha\pi/2$  then
                \begin{equation}
                    E_{\alpha, \beta}(z)=-\sum_{k=1}^p \frac{z^{-k}}{\Gamma(\beta-\alpha k)}+O\left(|z|^{-p-1}\right)
                    \quad\text{when} \quad |z|\to\infty.
                \label{eq: asymptotics Ealphabeta stable:0}
                \end{equation}
        \end{itemize}
        Here, $z^\gamma$ for $z\in\C$ and $\gamma\in\R\setminus\N$ denotes the principal branch of the corresponding multivalued function.
We refer, for example, to \cite[Lemma A1]{DiethelmSiegmundTuan2017} for more detailed explanations of Parts 1 and 2.
 Inequality \eqref{Ea:lower} immediately follows from the estimates in Part (2), because $\Re\lambda>0$.
\end{proof}        
      
\begin{remark}\label{rem:stab:FDE}
    Lemma \ref{lem:ML} implies immediately that 
if $|\arg \lambda| >\alpha\pi/2 $, then the solution $u(t)\equiv 0$  of the linear homogeneous equation 
$\partial_t^\alpha u=\lambda u$
is asymptotically stable and is unstable if $|\arg \lambda| <\alpha\pi/2$. Moreover, for $|\arg \lambda| >\alpha\pi/2 $ and for $f\in ([0,\infty))$ satisfying $\lim_{t\to\infty} f(t)=0$, the solution of the linear inhomogeneous equation 
$\partial_t^\alpha u=\lambda u+f$
given by formula 
\eqref{Duh:0} satisfies $\lim_{t\to\infty} u(t)=0$.
This can be shown by combining estimates \eqref{Ealpha:Ealphaalphs:estimates} with the reasoning in Lemma \ref{lem:integral}, 
below (see, for example,  \cite[Lemma~A.4]{DiethelmSiegmundTuan2017} for detailed calculations).
\end{remark}

\subsection{Abstract linear fractional Cauchy problem.}
Now, we consider a general
      densely defined, closed (possibly unbounded) linear operator 
       $(\A, D(\A))$
      on a Banach space $(\X, \|\cdot\|)$ and the linear fractional differential equation
 \begin{equation}
     \partial_t^\alpha u=\A u \qquad\text{with} \quad \alpha\in (0,1).
 \end{equation}     
 
      \begin{assumption}\label{ass:A}
For a certain Banach space $(\X, \|\cdot\|)$, we assume  that the linear operator 
$
\A : D(\A )\subset \X \mapsto \X
$
 generates a strongly continuous  semigroup of linear operators $\{T(t)\}_{t\geq 0}$ on $\X$.
 Moreover, we assume that the Spectral Mapping Theorem holds true, namely, 
 \begin{equation}\label{spectra:mapping}
     \sigma(T(t))\setminus \{0\}=e^{t\sigma(\A )} \quad \text{for all} \quad t\ge 0.
 \end{equation}
\end{assumption}

It is well known that the semigroup $\{T(t)\}_{t\geq 0}$ is exponentially bounded (see {\it e.g} \cite[Ch. 1, Proposition 5.55]{Engel-Nagel}), namely,  there exist constants $M\geq 1$ and $\omega \in \R$ such that, for every $u_0\in \X$,  the following estimate holds true
    \begin{equation}
    \label{Tt:exp}
        \|T(t) u_0\|\leq Me^{\omega t}\|u_0\| \qquad \text{for all} \quad t\geq 0.
    \end{equation}
    Recall that the Spectral Mapping Theorem holds true for sufficiently regular semigroups \cite[Corollary 3.12]{Engel-Nagel}:
for example, for
uniformly continuous semigroups and
analytic semigroups.

In this general setting, we discuss  the Cauchy problem for the abstract linear fractional differential equation with $\alpha \in (0,1]$
and with 
  $f\in C\big([0,\infty),\X\big)$
 \begin{align}
            \partial^\alpha_t u &= \A u+f(t),\\
            u(0)&=u_0,
            \label{eq: FDE simple:1}
        \end{align}
which has a unique  mild solution of the following form 
    \begin{equation}\label{duh:lin}
        u(t) = S_\alpha(t)u_0 +\int_0^t (t-s)^{\alpha-1}P_\alpha (t-s) f(s) \, ds.
    \end{equation}
    Here, 
 the {\it resolvent family} $\{S_\alpha(t)\}_{t\geq 0}$
  is a strongly continuous family of bounded and
linear operators defined on $\X$, such that $S_\alpha(0)=I$,
which commute with $\A $ and satisfy the resolvent
equation
\begin{equation}
        S_\alpha (t)u_0 = u_0 + \int_0^t \frac{(t-s)^{\alpha-1}}{\Gamma(\alpha)} \A  S_\alpha  (s)u_0\, ds, 
    \end{equation}
    for all  $u_0\in D(\A )$ and  $t \geq 0$.
The second family $\{P_\alpha(t)\}_{t> 0}$ in equation \eqref{duh:lin} is called the {\it integral resolvent} and it consists of a strongly continuous family of bounded and
linear operators, which commute with $\A $ and satisfy the equation
$$
t^{\alpha-1}P_\alpha (t)u_0= \frac{t^{\alpha-1}}{\Gamma(\alpha)} u_0  +
\int_0^t \frac{(t-s)^{\alpha-1}}{\Gamma(\alpha)} s^{\alpha-1}\A  P_\alpha  (s)u_0\, ds 
$$
for all  $u_0\in D(\A )$ and  $t > 0$.
Here, we also recall the well-known relation for these resolvent families which is obtained directly from their definitions and 
which hold true for all $u_0\in D(\A )$ and all $t>0$:
 \begin{equation} \label{Salpha:Cauchy:problem}
      S_\alpha(t)u_0= \frac{1}{\Gamma(1-\alpha)}\int_0^t (t-s)^{-\alpha} s^{\alpha-1} P_\alpha(s)u_0\, ds.
  \end{equation}
Moreover, both resolvents are solutions of the initial value problems
\begin{align}\label{Sa:Cauchy}
    \partial^\alpha_t S_\alpha(t)u_0 &= \A S_\alpha (t)u_0, \\
    S_\alpha(0)u_0&=u_0
\end{align}
and
\begin{equation} 
    \begin{split}
        \label{Pa:Cauchy}
        \RL\partial^\alpha_t \big(t^{\alpha-1}P_\alpha(t)u_0\big) &= \A \big(t^{\alpha-1}P_\alpha (t)\big)u_0,\\
        \lim_{t\to 0} J^{1-\alpha}_t\big(t^{\alpha-1} P_\alpha(t)u_0\big)&=u_0.
    \end{split}
\end{equation}

The resolvent operators $S_\alpha(t)$ and $P_\alpha(t)$ are extensions of the Mittag-Leffler functions $E_\alpha(t)$ and $E_{\alpha,\alpha}(t)$
to Banach spaces and have analogous properties (compare, for example, the Cauchy problems \eqref{Sa:Cauchy} and \eqref{Pa:Cauchy} with those in \eqref{Ea:Cauchy} and \eqref{Eaa:Cauchy}.
These resolvents were introduced by Pr\"uss \cite{P93} who extensively studied their properties (also those mentioned above) and their connections to the abstract Cauchy and Volterra equations.
Those results have been further generalized and expanded in several other works; see, for example, \cite{L00,Bajlekova2001,KJ12,WCX12,MPZ13,KLW16,AML16,AA18} for the proofs of the relations above and for other references.

The definitions of the operators $S_\alpha(t)$ and $P_\alpha(t)$ do not require that $\A $ generates a semigroup.
However, in the case where $\A $ generates a strongly continuous semigroup of linear operators, as stated in Assumption \ref{ass:A},
the resolvent families $S_\alpha(t), P_\alpha(t):\X \to \X$ 
are given explicitly by the following subordination formulas which are due to Bazhlekova \cite{B00,Bajlekova2001} (see also \cite{KLW16,MT19} for generalizations)
\begin{equation}\label{Salpha}
    S_\alpha(t) u_0 = \int_0^\infty \Psi_\alpha (s) T(st^\alpha) u_0\, ds 
 \end{equation}
 and
 \begin{equation}\label{Palpha}
      P_\alpha(t) u_0 =   \int_0^\infty \alpha s \Psi_\alpha (s) T(st^\alpha) u_0\, ds,
\end{equation}
where $\Psi_\alpha$ is the Wright-type function \eqref{Wright} (compare with the formulas for $E_\alpha$ and $E_{\alpha,\alpha}$ in Lemma \ref{lem:E:E}).



\subsection{Linear stability}\label{sec:lin:stab}

The stability of a strongly continuous semigroup of linear operators $\{T(t)\}_{t\ge0}$, expressed by its convergence to zero as $t\rightarrow\infty$, is often connected with the properties of the spectrum of the operator $(\A , D(\A ))$ that generates the semigroup. To explain this idea, it is useful to introduce the growth bound
\begin{align}
\omega_{0}(\A )
=\inf\left\{\omega \in\mathbb{R}\; : \; \exists M_{\omega}\ge1 \quad\text{such that} \quad \|T(t)\|\leq M(\omega) e^{\omega t}
\quad \text{for all} \quad t\ge 0
\right\}.
\end{align}
From this definition, it is immediately clear that $\{T(t)\}_{t\ge0}$ is uniformly exponentially stable if and only if
$$\omega_{0}(\A )<0.$$  
Now, we recall (following, for example, the monograph \cite[Chapter IV]{Engel-Nagel}) the direct characterizations of uniform exponential stability of the semigroup in terms of its generator by using spectral theory. More precisely, if the operator and the semigroup generated by it satisfy the Spectral Mapping Theorem \eqref{spectra:mapping}, then the spectral bound
$$s(\A ) \equiv \sup\{\Re \lambda \;:\; \lambda\in\sigma(\A )\}$$
satisfies $s(\A )=\omega_0(\A )$. Consequently, such a semigroup $\{T(t)\}_{t\ge0}$ is uniformly exponentially stable if $s(\A )<0$.

In this paper, we extend this idea to the resolvent families 
and {\it the linear stability} is expressed by the following estimates (see Assumption~\ref{ass:stab:resolvent}, below)
\begin{equation} \label{SP:ass:stab:000}
               \|S_\alpha (t)v\|\le K \|v\| \min\{t^{-\alpha},1\}
               \quad\text{and}\quad    \|t^{\alpha-1}P_\alpha (t)v\|\le  K \|v\| \min\{t^{-\alpha-1},t^{\alpha-1}\}
    \end{equation}
        for  all $v\in \X$ and all $t> 0$.
In other words, in the case of linear stability, we require that $\|S_\alpha (t)v\|$ and $\|P_\alpha (t)v\|$ decay algebraically in the same way as the Mittag-Leffler functions $E_\alpha(\lambda t)$ and $E_{\alpha,\alpha}(\lambda t)$ with $|\arg \lambda| >\alpha\pi/2 $, see Lemma \ref{lem:ML}.

In the following, we discuss the conditions imposed on $\sigma(\A )$ under which these two decay estimates are true, and we begin with the following general estimate. 

\begin{lemma}[{\cite[Corollary 3.2]{Bajlekova2001}}]\label{lem:SPE}
Let $\alpha\in (0,1)$.
    For the strongly continuous semigroup $\{T(t)\}_{t\geq 0}$ 
    satisfying inequality \eqref{Tt:exp} with some $M\ge 1$ and $\omega\in \R$
    and for the corresponding resolvents given by formulae \eqref{Salpha} and \eqref{Palpha},
    we have the estimates
       \begin{align}
           \|S_\alpha (t)u_0\|\le M E_\alpha(\omega t^\alpha)\|u_0\| 
           \qquad\text{and}\qquad
           \|P_\alpha (t)u_0\|\le M E_{\alpha,\alpha}(\omega t^\alpha)\|u_0\|,
       \end{align}
    for all $t\ge 0$ and all $u_0\in \X$.
\end{lemma}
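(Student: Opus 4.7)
The plan is to exploit the subordination formulas \eqref{Salpha} and \eqref{Palpha} together with the exponential bound \eqref{Tt:exp} on the semigroup, and then recognize the resulting scalar integrals as Mittag-Leffler functions via Lemma \ref{lem:E:E}. Since the subordination kernel $\Psi_\alpha$ is nonnegative on $(0,\infty)$ (by Lemma \ref{lem:E:E}), passing the Banach space norm inside the integral will be harmless and will preserve the estimate with the same constant $M$.

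First I would treat $S_\alpha(t)u_0$. Starting from formula \eqref{Salpha}, the triangle inequality for the Bochner integral gives
\begin{equation}
\|S_\alpha(t)u_0\| \le \int_0^\infty \Psi_\alpha(s)\, \|T(st^\alpha)u_0\|\, ds,
\end{equation}
where I use that $\Psi_\alpha(s)\ge 0$ to drop the absolute value around $\Psi_\alpha$. Applying \eqref{Tt:exp} to the factor $\|T(st^\alpha)u_0\|\le M e^{\omega s t^\alpha}\|u_0\|$ yields
\begin{equation}
\|S_\alpha(t)u_0\| \le M\|u_0\|\int_0^\infty \Psi_\alpha(s)\, e^{\omega t^\alpha s}\, ds.
\end{equation}
By the first identity in \eqref{subord:ML} with $z=\omega t^\alpha$, the last integral equals $E_\alpha(\omega t^\alpha)$, which gives the desired bound on $S_\alpha$.

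The estimate for $P_\alpha(t)u_0$ follows in exactly the same way, using \eqref{Palpha} in place of \eqref{Salpha}: since $\alpha s\Psi_\alpha(s)\ge 0$ on $(0,\infty)$, the triangle inequality combined with \eqref{Tt:exp} produces
\begin{equation}
\|P_\alpha(t)u_0\| \le M\|u_0\|\int_0^\infty \alpha s\, \Psi_\alpha(s)\, e^{\omega t^\alpha s}\, ds,
\end{equation}
and the second identity in \eqref{subord:ML} identifies the right-hand integral as $E_{\alpha,\alpha}(\omega t^\alpha)$.

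I do not expect any serious obstacle here; the proof is essentially a direct substitution once the subordination formulas are in place. The only point that requires care is the justification of passing the norm inside the Bochner integral and checking absolute convergence — but this is guaranteed by the bound $\|T(st^\alpha)u_0\|\le M e^{\omega s t^\alpha}\|u_0\|$ together with the rapid decay of $\Psi_\alpha$ that makes $\int_0^\infty \Psi_\alpha(s) e^{\omega t^\alpha s}\, ds$ finite (indeed equal to $E_\alpha(\omega t^\alpha)$). Nonnegativity of $\Psi_\alpha$, stated in Lemma \ref{lem:E:E}, is what ensures that no cancellation is lost when moving from the vector-valued integral to the scalar one, and it is the reason the constant $M$ carries over without inflation.
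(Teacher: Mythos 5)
Your argument is correct and is exactly the paper's proof: apply the norm to the subordination formulas \eqref{Salpha}--\eqref{Palpha}, use the exponential bound \eqref{Tt:exp}, and identify the resulting scalar integrals as $E_\alpha(\omega t^\alpha)$ and $E_{\alpha,\alpha}(\omega t^\alpha)$ via Lemma \ref{lem:E:E}, with nonnegativity of $\Psi_\alpha$ keeping the constant $M$. No differences worth noting.
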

\begin{proof}
    Apply the norm $\|\cdot \|$ to the explicit expressions for $S_\alpha (t)$ and $P_\alpha (t)$ in \eqref{Salpha}-\eqref{Palpha}, then use the semigroup estimate \eqref{Tt:exp} and Lemma \ref{lem:E:E}.
\end{proof}

Consequently, the negative spectral bound of $\A$ implies linear stability in the case of resolvents.

\begin{cor}\label{cof:negative spectal bound}
Consider an operator $\A $, satisfying Assumption \ref{ass:A}, which has a negative spectral bound $s(\A )<0$.
Then the corresponding resolvent families $S_\alpha(t)$ and $P_\alpha(t)$
    satisfy estimates \eqref{SP:ass:stab:000}.
\end{cor}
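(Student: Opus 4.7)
The plan is to chain three facts already available in the excerpt: the Spectral Mapping Theorem \eqref{spectra:mapping} from Assumption \ref{ass:A}, the subordination estimate of Lemma \ref{lem:SPE}, and the algebraic decay of the Mittag-Leffler functions from Lemma \ref{lem:ML}(1). The first step is to translate the spectral hypothesis $s(\A)<0$ into a semigroup estimate with a strictly negative exponential rate. By Gelfand's formula the spectral radius of $T(t)$ equals $e^{t\omega_0(\A)}$, while \eqref{spectra:mapping} yields $r(T(t))=e^{ts(\A)}$. Comparing the two gives the identity $\omega_0(\A)=s(\A)<0$, so one may fix any $\omega\in(s(\A),0)$ and a constant $M\ge 1$ such that $\|T(t)\|\le M e^{\omega t}$ for all $t\ge 0$.

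With this negative $\omega$ in hand, Lemma \ref{lem:SPE} applied to the corresponding semigroup gives, for every $v\in\X$ and $t>0$,
$$\|S_\alpha(t)v\|\le M E_\alpha(\omega t^\alpha)\|v\|,\qquad \|t^{\alpha-1}P_\alpha(t)v\|\le M\bigl(t^{\alpha-1}E_{\alpha,\alpha}(\omega t^\alpha)\bigr)\|v\|,$$
where the Mittag-Leffler expressions on the right are nonnegative thanks to the subordination formulas in Lemma \ref{lem:E:E}, so one may work without absolute values. Since $\omega<0$, we have $|\arg\omega|=\pi>\alpha\pi/2$ for every $\alpha\in(0,1)$; hence Lemma \ref{lem:ML}(1) with $\lambda=\omega$ supplies the bounds $E_\alpha(\omega t^\alpha)\le m(\alpha,\omega)\min\{t^{-\alpha},1\}$ and $t^{\alpha-1}E_{\alpha,\alpha}(\omega t^\alpha)\le m(\alpha,\omega)\min\{t^{-\alpha-1},t^{\alpha-1}\}$. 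Substituting these into the previous display produces exactly \eqref{SP:ass:stab:000} with $K=M\,m(\alpha,\omega)$.

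I do not foresee a genuine obstacle: the argument is essentially a direct concatenation of the three lemmas. The only subtlety worth flagging is that Assumption \ref{ass:A} is used in an essential way to identify the spectral bound $s(\A)$ with the growth bound $\omega_0(\A)$; without the Spectral Mapping Theorem this identity can fail for general strongly continuous semigroups, and then the hypothesis $s(\A)<0$ alone would be insufficient to obtain an exponential bound on $T(t)$ with negative rate, which is the only ingredient that feeds the subordination argument.
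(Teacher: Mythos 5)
Your proof is correct and follows essentially the same route as the paper: obtain $\|T(t)\|\le Me^{\omega t}$ for some $\omega\in(s(\A),0)$ via the Spectral Mapping Theorem, feed this into Lemma \ref{lem:SPE}, and then invoke Lemma \ref{lem:ML}, Part 1, with $\lambda=\omega$ (for which $|\arg\omega|=\pi>\alpha\pi/2$). Your explicit justification of $\omega_0(\A)=s(\A)$ and the remark on nonnegativity of the Mittag-Leffler bounds are just slightly more detailed versions of steps the paper leaves implicit.
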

\begin{proof}
    Use Lemma \ref{lem:SPE} and the inequality  
    \begin{equation}\label{T:omega:estimate}
    \|T(t)u_0\|\le M(\omega) e^{\omega t}\|u_0\| \quad\text{for all} \quad u_0\in\X, \; t\geq 0,
    \end{equation}
    which is true for every $\omega\in \big(s(\A ), 0\big)$ when the Spectral Mapping Theorem \eqref{spectra:mapping} is satisfied.
    Then, apply the estimates from Lemma \ref{lem:ML}, part 1.
\end{proof}

\begin{remark}\label{remark:linear:stability}
{\it It is not expected} that the negative spectral bound (as in Corollary \ref{cof:negative spectal bound})
is an optimal assumption to prove linear stability estimates \eqref{SP:ass:stab:000}.
There is a conjecture that it suffices to require 
\begin{equation}\label{conj:stab}
    \sigma(\A )\subset \left\{\lambda\in\C\;:\; \left|\arg(\lambda )\right| >\frac{\alpha\pi}{2}\right\}.
\end{equation}
Here, the motivation for such an expectation comes from the stability analysis of solutions to the simplest fractional differential equation
\eqref{eq: FDE simple:0} described in Lemma \ref{lem:E:E} and in Remark \ref{rem:stab:FDE}.
In this work, we prove decay estimates \eqref{SP:ass:stab:000} under assumption \eqref{conj:stab} in the study of stability of constant solutions to 
\begin{itemize}
    \item systems of fractional differential equations, when $\A $ is just a matrix - see Section \ref{sec:FDE}
    \item systems of fractional reaction-diffusion equations, when  $\A $ consists of Laplace operators perturbed by a constant coefficient matrix - see Section \ref{sec:FRD}.
\end{itemize}
\end{remark}

\begin{remark}\label{rem:Pa:from:Sa}
    Notice that the second inequality for $P_\alpha(t)$ in \eqref{SP:ass:stab:000}
implies the first estimate for $S_\alpha(t)$.
    Indeed, by equation \eqref{Salpha:Cauchy:problem}, we obtain 
    \begin{align}
         \|S_\alpha(t)v\|&\le \frac{1}{\Gamma(1-\alpha)}\int_0^t (t-s)^{-\alpha} s^{\alpha-1} \|P_\alpha(s)v\|\, ds\\
         &\le  \frac{K\|v\|}{\Gamma(1-\alpha)}\int_0^t (t-s)^{-\alpha} \min\{s^{-\alpha-1},s^{\alpha-1}\}\, ds\\
         &\le  \frac{K\|v\|}{\Gamma(1-\alpha)} C \min\{t^{-\alpha},1\},
    \end{align}
    where last estimate holds true by Lemma \ref{lem:integral}, below. 
\end{remark}

\subsection{Linear instability}\label{sec:lin:instab}
Obviously, to show that the zero solution of the abstract linear equation $\partial_tu=\A u$ is unstable, it suffices to find an eigenvalue $\lambda\in \C$ of the operator $\A $ such that $\Re\lambda>0$.
It is not very difficult to show that if $s(\A )>0$ then the zero solution is also unstable. Here, we should use the fact that each $\lambda\in \sigma(\A )$ is such that $\Re\lambda=s(\A )$ corresponds to an approximate eigenvalue and proceed, for example, as shown in \cite{SS98}.
Then, the element of $\sigma(\A )$, which satisfies $\Re\lambda=s(\A )$, together with the semigroup estimates \eqref{T:omega:estimate} with all $\omega >s(\A )>0$
are used to show the instability of the zero solution of suitable semilinear problems (see, for example,  \cite{SS98}). 
Here, we extend this idea to the resolvent families $S_\alpha(t)$ and $P_\alpha(t)$.

First, we notice that if $u_0$ is an eigenfunction of the operator $\A $ corresponding to the eigenvalue $\lambda$, then
    \begin{equation}\label{Sa:eigen}
        S_\alpha (t) u_0 = E_\alpha(\lambda t^\alpha) u_0 \quad \text{for all} \quad t\ge 0.
    \end{equation}
For the proof of formula \eqref{Sa:eigen}, it suffices to use the representation of $S_\alpha(t)$ from  \eqref{Salpha},
the following well-known fact for strongly continuous semigroups of linear operators 
$$T(t)u_0=e^{\lambda t} u_0 \quad \text{for all} \quad t\ge 0$$
(see, for example, \cite[Theorem 3.6]{Engel-Nagel}), and the representation of the Mittag-Leffler function $E_\alpha(t)$ from Lemma \ref{lem:E:E}.
Now, by the asymptotic expansion of the Mittag-Leffler function from Lemma \ref{lem:ML}, if
\begin{equation}\label{lambda:unstable}
 \lambda\neq 0 \quad\text{and\quad }   \left|\arg(\lambda )\right| <\frac{\alpha\pi}{2},
\end{equation}
then the solution \eqref{Sa:eigen} of the equation $\partial_t^\alpha u=\A u$
grows exponentially when $t\to\infty$.

The existence of the eigenvalue $\lambda\in\C$ satisfying  inequality \eqref{lambda:unstable} is not sufficient on its own
in our proof of the instability of the zero solution of semilinear problems (discussed in Section~\ref{sec:instability})
and we will also require that $\Re\lambda$ controls the exponential growth of the resolvent families (as stated in Assumption \ref{ass:instab}, below). 
The following proposition presents a non-optimal result in this direction.

\begin{proposition}\label{prop:instab}
Under Assumption \ref{ass:A}, suppose, moreover, that the operator $\A $ has 
the eigenvalue $\lambda$ such that 
    \begin{equation}
        \Re \lambda  =s(\A )>0 \qquad\text{and}\qquad \left|\arg(\lambda )\right| <\frac{\alpha\pi}{2}.
    \end{equation}
Then for every $\omega>0$ there exists $C(\omega)>0$
    such that 
    \begin{equation}\label{SaPa:instab:0000}
        \begin{split}
            \|S_\alpha(t)u_0\|
        &\le C(\omega) e^{(\Re \lambda  +\omega)^{1/\alpha}t}\|u_0\| 
        \\
          \|t^{\alpha-1} P_\alpha(t)u_0\|
        &\le C(\omega) e^{(\Re \lambda  +\omega)^{1/\alpha}t}\|u_0\|
        \end{split}
    \end{equation}
    for all $t\ge 0$ and $u_0\in\X$.
    \end{proposition}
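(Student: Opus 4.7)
The plan is to combine the a priori bounds on the resolvent families in Lemma~\ref{lem:SPE} with the second part of the asymptotic expansion in Lemma~\ref{lem:ML}, converting the spectral assumption $s(\A )=\Re\lambda$ into a useful exponential bound on the semigroup via the Spectral Mapping Theorem. No eigenvalue information beyond the location of $s(\A )$ in the relevant angular sector is actually needed for the proof.

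First, Assumption~\ref{ass:A} guarantees the Spectral Mapping Theorem and hence $\omega_0(\A )=s(\A )=\Re\lambda$. For each fixed $\omega>0$, the value $\omega':=\Re\lambda+\omega$ therefore exceeds $\omega_0(\A )$, so estimate \eqref{T:omega:estimate} yields a constant $M(\omega')\ge 1$ with
\[
\|T(t)u_0\|\le M(\omega')\,e^{\omega' t}\|u_0\| \qquad\text{for all}\quad t\ge 0,\ u_0\in\X.
\]
Feeding this pointwise bound into the subordination representations \eqref{Salpha}--\eqref{Palpha} (equivalently, applying Lemma~\ref{lem:SPE}) immediately produces
\[
\|S_\alpha(t)u_0\|\le M(\omega')\,E_\alpha(\omega' t^\alpha)\|u_0\|
\quad\text{and}\quad
\|P_\alpha(t)u_0\|\le M(\omega')\,E_{\alpha,\alpha}(\omega' t^\alpha)\|u_0\|.
\]

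Next, since $\omega'>0$ satisfies $|\arg\omega'|=0<\alpha\pi/2$, part~(2) of Lemma~\ref{lem:ML} applies with $\lambda$ replaced by $\omega'$. Inequality \eqref{Ea:growth} and its companion for $t^{\alpha-1}E_{\alpha,\alpha}$ give
\[
E_\alpha(\omega' t^\alpha)\le \tfrac{1}{\alpha}\,e^{(\omega')^{1/\alpha}t}+m(\alpha,\omega')\min\{t^{-\alpha},1\},
\]
\[
t^{\alpha-1}E_{\alpha,\alpha}(\omega' t^\alpha)\le \tfrac{(\omega')^{1/\alpha-1}}{\alpha}\,e^{(\omega')^{1/\alpha}t}+m(\alpha,\omega')\min\{t^{-\alpha-1},t^{\alpha-1}\}.
\]
Because $e^{(\omega')^{1/\alpha}t}\ge 1$ on $[0,\infty)$ and the remainders are bounded on the range where the claim is used, these right-hand sides can be absorbed into a single constant times $e^{(\Re\lambda+\omega)^{1/\alpha}t}$, giving \eqref{SaPa:instab:0000} with $C(\omega):=M(\omega')C'(\omega)$.

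The main subtlety I expect is the behavior of the $P_\alpha$-bound near $t=0$: the Mittag-Leffler remainder $\min\{t^{-\alpha-1},t^{\alpha-1}\}$ diverges like $t^{\alpha-1}$ as $t\to 0^+$, and indeed \eqref{Palpha} with $E_{\alpha,\alpha}(0)=1/\Gamma(\alpha)$ implies $\|t^{\alpha-1}P_\alpha(t)u_0\|\sim t^{\alpha-1}\|u_0\|/\Gamma(\alpha)$ as $t\to 0^+$, which is not controllable by an exponential. Thus the second estimate in \eqref{SaPa:instab:0000} must be read for $t$ bounded away from $0$, with $C(\omega)$ absorbing the bounded contribution on any fixed interval $[t_0,\infty)$; this is precisely the regime used in the subsequent semilinear instability arguments, and for the $S_\alpha$-bound no such restriction is needed.
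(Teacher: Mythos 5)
Your argument is correct and follows the paper's own proof essentially verbatim: the assumption $\Re\lambda=s(\A)>0$ together with the Spectral Mapping Theorem gives $\|T(t)u_0\|\le M(\omega)e^{(\Re\lambda+\omega)t}\|u_0\|$, and then the subordination formulas \eqref{Salpha}--\eqref{Palpha} (i.e.\ Lemma \ref{lem:SPE}, via Lemma \ref{lem:E:E}) combined with the Mittag--Leffler asymptotics of Lemma \ref{lem:ML}, Part (2), yield \eqref{SaPa:instab:0000}. Your closing caveat is also well taken and is a point the paper's proof leaves implicit: since $P_\alpha(t)u_0\to u_0/\Gamma(\alpha)$ as $t\to0^+$, the quantity $\|t^{\alpha-1}P_\alpha(t)u_0\|$ behaves like $t^{\alpha-1}\|u_0\|/\Gamma(\alpha)$ near the origin, so the second bound in \eqref{SaPa:instab:0000} is literally valid only for $t$ bounded away from $0$ (or after inserting a factor such as $1+t^{\alpha-1}$ on the right-hand side, which is harmless where the estimate is later used, the singularity being integrable in the Duhamel term).
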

    \begin{proof}
Since  $ \Re \lambda  =s(\A )>0$,  for every $\omega>0$ there exists $M(\omega)>0$
    such that 
    \begin{equation}
        \|T(t)u_0\|\le M(\omega) e^{(\Re \lambda  +\omega)t}\|u_0\|
        \qquad 
        \text{for all} \quad  t\ge 0 \quad \text{and} \quad  u_0\in\X
    \end{equation}
(see, {\it e.g.}, \cite[Ch.~IV, Corollary 3.12]{Engel-Nagel}).
Next, we apply the explicit formulas for the resolvent families in \eqref{Salpha} and \eqref{Palpha} combined with Lemma~\ref{lem:E:E}.
\end{proof}

\begin{remark}\label{rem:instab} 
In this work, we describe the linear instability by requiring (in Assumption \ref{ass:instab} below) the existence of $\lambda \in \sigma(\A )$ satisfying conditions \eqref{lambda:unstable} and which controls the growth of the resolvent families, as stated in \eqref{SaPa:instab:0000}.
To have such an element of the spectrum, it seems that it suffices to require that 
\begin{equation}
    \Re \lambda = \sup \left\{\Re \eta \,:\, \eta\in \sigma(\A)\quad \text{and} \quad |\arg(\eta)|< \frac{\alpha \pi}{2}  \right\};
\end{equation}
 however, we can prove this in two cases only:
\begin{itemize}
    \item when $\A $ is just a matrix - see Section \ref{sec:FDE}
    \item when $\A $ consists of Laplace operators perturbed by a constant coefficient matrix - see Section \ref{sec:FRD}.
\end{itemize}
\end{remark}

\section{Linearization principle}

\subsection{Fractional semilinear Cauchy problem}\label{sec:gen:FRDS}

We prove the linearization principle for the following abstract 
fractional semilinear  equation with $\alpha\in (0,1]$
    \begin{equation}   \label{eq:FRDS}
                \partial^\alpha_t u =  \A  u + f(t,u), \qquad t>0,
    \end{equation}
 supplemented with an initial condition 
    \begin{equation}   \label{ini:FRDS}
                u(\cdot,0) = u_0,
    \end{equation}
 with a linear (possibly unbounded) operator $\A $.

\begin{theorem}\label{thm:exist}
    Let $\alpha\in (0,1]$. 
Assume that $\A $ generates a strongly continuous semigroup and 
$f: [0,\infty)\times \X \mapsto \X$ is a continuous function which is also locally Lipschitz continuous in the variable $u$.
 For every $u_0 \in \X$ there exists $T>0$ such that problem \eqref{eq:FRDS}-\eqref{ini:FRDS} has a unique mild solution $u \in C\big( [0,T];  \X \big)$.
    Moreover, if $T_{max}>0$ is a maximal time of the existence of this solution and 
    \begin{equation} \label{u:blowup}
   \text{if} \quad  T_{max}<\infty, \quad \text{then} \quad \sup_{t\in [0, T_{max})}\|u(t)\|=\infty.\end{equation}
\end{theorem}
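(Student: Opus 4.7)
The plan is to prove the theorem by a standard Banach fixed point argument applied to the mild formulation \eqref{duh:lin}, then to extract a maximal interval of existence and the blow-up alternative by continuation.

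First, I would set up the fixed point problem. The mild solution satisfies
\begin{equation}
    \Phi(u)(t) := S_\alpha(t)u_0 + \int_0^t (t-s)^{\alpha-1} P_\alpha(t-s) f(s,u(s))\, ds,
\end{equation}
and I need a fixed point $u = \Phi(u)$ in $C([0,T];\X)$. Since $\A$ generates a strongly continuous semigroup satisfying \eqref{Tt:exp}, Lemma \ref{lem:SPE} combined with the local boundedness of the Mittag-Leffler functions yields a constant $M_T$ (depending on $T$) such that $\|S_\alpha(t)\| \le M_T$ and $\|P_\alpha(t)\| \le M_T$ uniformly for $t\in[0,T]$. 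Moreover, strong continuity of $\{S_\alpha(t)\}_{t\ge 0}$ gives $S_\alpha(\cdot)u_0 \in C([0,T];\X)$.

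Next, I would carry out the contraction argument. Fix $R>0$, let $L$ be the Lipschitz constant of $f$ on $[0,1]\times\overline{B(u_0, R)}$, and set $K := \sup_{(t,v)\in [0,1]\times\overline{B(u_0,R)}}\|f(t,v)\|$. For the closed set
\begin{equation}
    \mathcal{B}_{T,R} := \{u \in C([0,T];\X) : \|u(t)-u_0\| \le R \text{ for all } t\in[0,T]\},
\end{equation}
a direct estimate gives
\begin{equation}
    \|\Phi(u)(t) - u_0\| \le \|S_\alpha(t)u_0 - u_0\| + M_T K \frac{T^\alpha}{\alpha}
\end{equation}
and, using the Lipschitz property,
\begin{equation}
    \|\Phi(u)(t)-\Phi(v)(t)\| \le M_T L \frac{T^\alpha}{\alpha}\|u-v\|_{C([0,T];\X)}.
\end{equation}
Choosing $T$ small enough that $\|S_\alpha(t)u_0-u_0\|\le R/2$, $M_T K T^\alpha/\alpha \le R/2$, and $M_T L T^\alpha/\alpha < 1$ makes $\Phi$ a contraction on $\mathcal{B}_{T,R}$, and the Banach fixed point theorem produces the unique mild solution on $[0,T]$. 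Uniqueness on any common interval of existence follows from a localized version of the same estimate together with a Gronwall-type argument adapted to the singular kernel $(t-s)^{\alpha-1}$.

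Finally, I would handle the blow-up alternative. Defining $T_{max}$ as the supremum of all $T$ for which a mild solution on $[0,T]$ exists, uniqueness glues local pieces into one solution $u\in C([0,T_{max});\X)$. Assume for contradiction that $T_{max}<\infty$ and $\sup_{t\in[0,T_{max})}\|u(t)\| =: M <\infty$. The main obstacle is to show $u$ admits a continuous extension to $T_{max}$ and that the equation can be restarted there despite the nonlocal memory; this is the only step that is not a verbatim copy of the classical semigroup argument. For $t_1<t_2<T_{max}$ I split
\begin{equation}
\begin{aligned}
    u(t_2)-u(t_1) &= (S_\alpha(t_2)-S_\alpha(t_1))u_0 \\
    &\quad+\int_0^{t_1}\!\!\bigl[(t_2-s)^{\alpha-1}P_\alpha(t_2-s)-(t_1-s)^{\alpha-1}P_\alpha(t_1-s)\bigr]f(s,u(s))\, ds\\
    &\quad+\int_{t_1}^{t_2}(t_2-s)^{\alpha-1}P_\alpha(t_2-s) f(s,u(s))\, ds,
\end{aligned}
\end{equation}
and use strong continuity of $S_\alpha, P_\alpha$ together with the bound $\|f(s,u(s))\|\le \sup_{(s,v)\in[0,T_{max}]\times \overline{B(0,M)}} \|f(s,v)\|<\infty$ and the dominated convergence theorem to conclude that $\{u(t)\}_{t\to T_{max}^-}$ is Cauchy, yielding a limit $u(T_{max})\in\X$. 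For $t\in[T_{max},T_{max}+\delta]$ I then rewrite the mild equation as
\begin{equation}
    u(t) = g(t) + \int_{T_{max}}^t (t-s)^{\alpha-1} P_\alpha(t-s) f(s,u(s))\, ds,
\end{equation}
where $g(t) := S_\alpha(t)u_0 + \int_0^{T_{max}} (t-s)^{\alpha-1} P_\alpha(t-s) f(s,u(s))\, ds$ is a prescribed continuous function on $[T_{max}, T_{max}+1]$. Repeating the contraction argument in a small ball around $g$ extends $u$ past $T_{max}$, contradicting maximality. This proves \eqref{u:blowup}.
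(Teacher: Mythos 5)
Your proposal is correct and follows essentially the same route as the paper: the paper's proof is a one-line appeal to the Banach contraction principle applied to the Volterra integral equation \eqref{duhamel0} using the resolvent bounds of Lemma \ref{lem:SPE}, which is exactly your fixed-point setup. Your extra detail on the continuation/blow-up alternative (extracting the limit at $T_{max}$ and restarting the equation with the memory term absorbed into the prescribed function $g$) simply fills in steps the paper leaves to the standard theory cited in Remark \ref{rem:exist}.
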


\begin{proof}
    A mild local-in-time solution is constructed in the usual way via the Banach contraction principle applied to the Volterra integral equation 
    \begin{equation}\label{duhamel0}
        u(t) = S_\alpha (t) u_0 + \int_0^t (t-s)^{\alpha-1}P_\alpha (t-s) f( s, u(s))\, ds
    \end{equation} 
by using the resolvent estimates from Lemma~\ref{lem:SPE}.
\end{proof}

\begin{remark} \label{rem:exist} 
The monograph by Gal and Varma \cite[Sec.~3]{gal2020fractional} provides well-posedness results for the abstract Cauchy problem 
\eqref{eq:FRDS}-\eqref{ini:FRDS} developed in the same spirit as Rothe \cite{R84} who considered the classical parabolic
problem ($\alpha=1$) for second order elliptic operators in divergence form.
Analogous results on the existence of solutions to abstract fractional equations similar to the one in \eqref{eq:FRDS} can be found, for example, in
\cite{AL08,WCX12,ABCCN15,C-N22} and in the references therein.
\end{remark}


\subsection{Nonlinear stability}
 {\it The linearization principle} considered in this work states that the stability (or instability) of solutions to the linear equation $\partial^\alpha_tu=\A u$ implies (via the Taylor expansion) the stability (or instability) of solutions to the quasi-linear equation  \eqref{eq:FRDS}. Here, we recall the classical definition.

\begin{definition}\label{def:stab}
    The zero solution of equation \eqref{eq:FRDS} is stable if for every $\varepsilon>0$ there is $\delta>0$ such that for each initial datum $u_0\in\X$ with $\|u_0\|\le \delta$ the corresponding solution $u=u(t)$ of problem \eqref{eq:FRDS}-\eqref{ini:FRDS} is global in time and satisfies $\|u(t)\|\le \varepsilon$ for all $t>0$.
\end{definition}

In the following, we assume the linear stability by imposing decay estimates (discussed in Section \ref{sec:lin:stab}) on the resolvent families.
\begin{assumption}[Linear stability]\label{ass:stab:resolvent}
There exists a constant $K>0$ such that 
    the resolvent families  \eqref{Salpha} and \eqref{Palpha} satisfy
            \begin{equation} \label{SP:ass:stab}
               \|S_\alpha (t)v\|\le K \|v\| \min\{t^{-\alpha},1\} 
               \quad\text{and}\quad
               \|t^{\alpha-1}P_\alpha (t)v\|\le  K \|v\| \min\{t^{-\alpha-1},t^{\alpha-1}\},
           \end{equation}
        for  all $v\in\X$ and all $t> 0$. 
\end{assumption}

    \begin{theorem}[Nonlinear stability] \label{thm:stab}
        Let the decay estimates of the resolvent families from Assumption \ref{ass:stab:resolvent} hold true.
        Suppose that 
            \begin{equation}\label{f:flat}
                \lim_{\|u\|\to 0}\frac{\sup_{t\ge 0} \|f(t,u)\|}{\|u\|}=0.
            \end{equation}
         Then, the stationary solution $u \equiv 0$ of equation  \eqref{eq:FRDS}
         is stable. Moreover, there exist numbers $\delta>0$ and $C>0$ such that for all $u_0\in \X$ with $ \|u_0\|\leq \delta$, the corresponding solution of Cauchy problem \eqref{eq:FRDS}-\eqref{ini:FRDS} satisfies
         \begin{equation}
             \|u(t)\|\leq C \|u_0\| \min\{t^{-\alpha},1\} \quad \text{for all} \quad t> 0.
         \end{equation}
    \end{theorem}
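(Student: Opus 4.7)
The plan is to set up the mild formulation of the solution, push the linear decay estimates through the Duhamel integral, and close a weighted sup-norm bootstrap using the flatness hypothesis on $f$. By Theorem \ref{thm:exist}, for any initial datum $u_0\in \X$ there is a unique mild solution $u\in C([0,T_{\max}),\X)$ satisfying the integral equation
\begin{equation}
    u(t) = S_\alpha (t) u_0 + \int_0^t (t-s)^{\alpha-1}P_\alpha (t-s) f(s,u(s))\, ds.
\end{equation}
Taking norms and invoking Assumption \ref{ass:stab:resolvent} gives
\begin{equation}
    \|u(t)\| \le K\|u_0\|\min\{t^{-\alpha},1\} + K\int_0^t \min\{(t-s)^{-\alpha-1},(t-s)^{\alpha-1}\}\,\|f(s,u(s))\|\, ds.
\end{equation}

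Next, I would exploit \eqref{f:flat}: for every $\varepsilon>0$ there exists $\eta>0$ such that $\|f(t,v)\|\le \varepsilon\|v\|$ uniformly in $t\ge 0$ whenever $\|v\|\le\eta$. I introduce the weight adapted to the expected decay, namely
\begin{equation}
    M(t) = \sup_{0<s\le t}\frac{\|u(s)\|}{\min\{s^{-\alpha},1\}},
\end{equation}
which is finite on any compact subinterval of $[0,T_{\max})$ by continuity of $u$. The bootstrap is then the following: set $\delta:=\|u_0\|$, and let $T^\ast$ be the largest time on which $M(t)\le 2K\delta$ holds. On $[0,T^\ast]$ we have $\|u(s)\|\le 2K\delta\,\min\{s^{-\alpha},1\}\le 2K\delta$, so choosing $\delta$ so small that $2K\delta\le\eta$, the flatness estimate applies pointwise in $s$ and yields
\begin{equation}
    \|u(t)\|\le K\delta\min\{t^{-\alpha},1\} + K\varepsilon\, M(t)\int_0^t \min\{(t-s)^{-\alpha-1},(t-s)^{\alpha-1}\}\min\{s^{-\alpha},1\}\, ds.
\end{equation}

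The crux is now the convolution estimate
\begin{equation}
    I(t):=\int_0^t \min\{(t-s)^{-\alpha-1},(t-s)^{\alpha-1}\}\min\{s^{-\alpha},1\}\, ds \le C_\ast\min\{t^{-\alpha},1\},
\end{equation}
which is exactly the inequality the authors attribute to Lemma \ref{lem:integral} and which was already invoked in Remark \ref{rem:Pa:from:Sa}. Dividing by $\min\{t^{-\alpha},1\}$ and taking the supremum over $[0,t]$ we obtain $M(t)\le K\delta + K\varepsilon C_\ast\, M(t)$. Fixing $\varepsilon$ small enough that $K\varepsilon C_\ast\le 1/2$ (which in turn fixes the threshold $\eta$, and hence fixes the admissible $\delta$) we conclude $M(t)\le 2K\delta$ strictly, so by continuity $T^\ast$ cannot be the maximal time of this property unless $T^\ast=T_{\max}$. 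Because $\|u(t)\|\le 2K\delta$ on $[0,T^\ast)$ remains bounded, the blow-up criterion \eqref{u:blowup} forces $T_{\max}=\infty$, and with $C:=2K$ the desired decay $\|u(t)\|\le C\|u_0\|\min\{t^{-\alpha},1\}$ holds for all $t>0$; stability in the sense of Definition \ref{def:stab} follows at once.

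I expect the main technical obstacle to be the convolution inequality for $I(t)$: the integrand has two different singular behaviours at $s=0$ and $s=t$, and one must split into $[0,t/2]$ and $[t/2,t]$ and further distinguish the regimes $t\le 1$ and $t\ge 1$ to see that the worst part always scales like $\min\{t^{-\alpha},1\}$ and not like a strictly slower decay. The bootstrap closure and the continuity argument on $T^\ast$ are standard once this inequality is in hand; the only subtle point is making sure that $M(t)$ is well-defined and continuous from the right at $t=0$, which is immediate because $\min\{s^{-\alpha},1\}=1$ for $s\le 1$, so $M(t)=\sup_{0<s\le t}\|u(s)\|$ near the origin.
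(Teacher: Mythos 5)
Your proposal is correct and follows essentially the same route as the paper's proof: the same Duhamel representation \eqref{duhamel0}, the resolvent decay of Assumption \ref{ass:stab:resolvent}, the flatness hypothesis \eqref{f:flat}, and the convolution estimate of Lemma \ref{lem:integral} with $\eta=0$, closed by a continuity/bootstrap argument together with the continuation criterion \eqref{u:blowup}. The only cosmetic difference is that you run the bootstrap on the weighted quantity $\sup_{0<s\le t}\|u(s)\|/\min\{s^{-\alpha},1\}$ directly (needing the strict smallness $K\varepsilon C_\ast<1/2$ to exit the continuity argument), whereas the paper tracks the difference $\|u(t)-S_\alpha(t)u_0\|$ against the threshold $\delta\min\{t^{-\alpha},1\}$ and derives a contradiction at the exit time; both close under the same smallness condition on the nonlinearity.
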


    \begin{proof}
    By assumption \eqref{f:flat}, for every $\kappa>0$ there exists $R>0$ such that 
    \begin{equation}
        \text{if} \quad \|u\|<R\quad\text{then}\quad \|f(t,u)\|\leq \kappa\|u\|.
    \end{equation}
    We choose the parameters $R$ and $\kappa$ at the end of this proof.
    For $\delta \in (0,R)$ and for 
     $u_0\in \X$ satisfying  $\|u_0\|=\delta$, we denote by $u=u(t)$ the corresponding local-in-time solution of problem \eqref{eq:FRDS}-\eqref{ini:FRDS} provided by Theorem \ref{thm:exist}. In fact, this  solution exists for all $t\ge 0$, which is an immediate consequence 
     of the estimates below combined with the usual continuation argument involving 
      relation \eqref{u:blowup}.
        Define
            \begin{align}
                & T=\sup\left\{r>0\,:\, 
                \| u(t)-S_\alpha(t)u_0\|< \delta \min\{t^{-\alpha},1\}\quad \text{for all} \quad t\in [0,r] \right\}.
            \end{align} 
If $T=\infty$, the proof is complete because, by the definition of $T$ and by Assumption \ref{ass:stab:resolvent}, we have
\begin{equation}\label{in:stab:10}
\begin{split}
    \|u(t)\|&\le \| u(t)-S_\alpha(t)u_0\| +\|S_\alpha(t)u_0\|\\
    &\le \delta \min\{t^{-\alpha},1\}(1+K)\qquad \text{for all} \quad t\ge 0.
    \end{split}
\end{equation}

Now, we conjecture that $T<\infty$ in order to obtain a contradiction.
For all $\delta< R/(1+K)$, inequality \eqref{in:stab:10} implies that $\|u(\tau)\|<R$ for all $\tau \in [0,T]$.
           Thus,  
\begin{equation}\label{f:stab}
    \|f(\tau, u(\tau))\|\le \kappa\|u(\tau)\|\le \kappa 
    \big(
    \|u(\tau)-S_\alpha(\tau)  u_0\| +\|S_\alpha(\tau)  u_0\|
    \big).
\end{equation}
        We apply the norm $\|\cdot\|$ to integral equation \eqref{duhamel0} with $t\in [0,T]$.
By using estimates 
\eqref{SP:ass:stab},
inequality \eqref{f:stab},
the definition of $T$, and Lemma \ref{lem:integral} with $\eta=0$ below,
 we obtain
        \begin{equation}\label{stab:est}
            \begin{split}
                 \|u(t)-S_\alpha (t)u_0\| &\leq \kappa K \int_0^t  
                \min\big\{(t-s)^{-\alpha-1},(t-s)^{\alpha-1}\big\}\\
                &\hspace{3cm}\times \big(  \|u(s)-S_\alpha (s)u_0\| +\|S_\alpha (s)u_0\|
                \big)
                \, ds\\
                &\le \kappa K(1+K) C(\alpha) \delta \min\{t^{-\alpha}, 1\},
            \end{split}
        \end{equation}
with a constant $C(\alpha)=C(\alpha,0)>0$ from Lemma \ref{lem:integral}.
In particular,  by the definition of $T$ (under the conjecture that $T<\infty$) and by inequality \eqref{stab:est} with $t=T$, 
we obtain the relations 
$$
    \delta \min\{T^{-\alpha}, 1\} 
    =\| u(T)-S_\alpha(T)u_0\|
    \leq \kappa K(1+K)  C(\alpha) \delta \min\{T^{-\alpha}, 1\}
$$
which leads to contradiction for sufficiently small $\kappa>0$, namely, for 
$$\kappa< \frac{1}{K(1+K)  C(\alpha)}.$$
    Thus, $T=\infty$ and inequality \eqref{in:stab:10} holds true for all $t\geq 0$, where $\delta=\|u_0\|$.
    \end{proof}

The estimate of the following lemma is used not only in the proof of Theorem~\ref{thm:stab} but also in the proofs of other results of this work.

\begin{lemma}\label{lem:integral}
   Let $\alpha\in (0,1)$ and $\eta\ge 0$. There exists a number $C(\alpha, \eta)>0$ such that  
\begin{equation}\label{ineq:alpha:eta}
 \int_0^t  
        \min\{(t-s)^{-\alpha-1},(t-s)^{\alpha-1}\}\big(\min\{s^{-\alpha},1\}\big)^{1+\eta}\, ds\leq C(\alpha,\eta)
        \min \{t^{-\alpha},1\}
\end{equation}
        for all $t\ge 0$.
\end{lemma}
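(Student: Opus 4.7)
The plan is to prove the inequality by splitting into a short-time regime and a long-time regime, and, in the long-time regime, further splitting the integration interval at the midpoint. Let
\[
I(t) := \int_0^t \min\{(t-s)^{-\alpha-1},(t-s)^{\alpha-1}\}\bigl(\min\{s^{-\alpha},1\}\bigr)^{1+\eta}\, ds.
\]
The target bound is $I(t)\le C(\alpha,\eta)\min\{t^{-\alpha},1\}$.

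For the short-time regime $t\in(0,1]$, both minima collapse: $\min\{t^{-\alpha},1\}=1$, the kernel satisfies $\min\{(t-s)^{-\alpha-1},(t-s)^{\alpha-1}\}=(t-s)^{\alpha-1}$ since $t-s\le 1$, and $\bigl(\min\{s^{-\alpha},1\}\bigr)^{1+\eta}=1$ for $s\in(0,t]\subset(0,1]$. Then $I(t)\le \int_0^t(t-s)^{\alpha-1}\,ds = t^\alpha/\alpha \le 1/\alpha$, which is the desired bound.

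For the long-time regime $t>1$, I would split $I(t)=I_1(t)+I_2(t)$ with integration over $[0,t/2]$ and $[t/2,t]$ respectively. On $[0,t/2]$ we have $t-s\ge t/2$, which for $t\ge 2$ is $\ge 1$, so the kernel equals $(t-s)^{-\alpha-1}\le (t/2)^{-\alpha-1}$. Pulling this factor out, one is left with $\int_0^{t/2}\bigl(\min\{s^{-\alpha},1\}\bigr)^{1+\eta}\,ds$, which is bounded by a constant if $\alpha(1+\eta)>1$, by $C\log t$ if $\alpha(1+\eta)=1$, and by $C t^{1-\alpha(1+\eta)}$ if $\alpha(1+\eta)<1$. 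In each of the three subcases the product against $t^{-\alpha-1}$ is dominated by $C t^{-\alpha}$ (using $\eta\ge 0$ in the last subcase, and $t^{-1}\log t\lesssim t^{-\alpha}$ in the logarithmic one). On $[t/2,t]$ we use $s\ge t/2\ge 1$, so $\bigl(\min\{s^{-\alpha},1\}\bigr)^{1+\eta}=s^{-\alpha(1+\eta)}\le (t/2)^{-\alpha(1+\eta)}\le C t^{-\alpha}$; pulling this factor out, the remaining $\int_{t/2}^t\min\{(t-s)^{-\alpha-1},(t-s)^{\alpha-1}\}\,ds$ is uniformly bounded by $\int_0^1 u^{\alpha-1}\,du+\int_1^\infty u^{-\alpha-1}\,du=2/\alpha$ via the substitution $u=t-s$. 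Combining the two contributions yields $I(t)\le C(\alpha,\eta)t^{-\alpha}$ for $t\ge 2$; the transition region $t\in(1,2]$ is handled by the same cutoff but is effectively a bounded domain, giving an absolute constant.

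The calculation is essentially bookkeeping once the splitting is chosen, but the only mildly delicate point to watch is the three-case distinction $\alpha(1+\eta)\gtreqless 1$ on $[0,t/2]$, where the logarithmic borderline must be absorbed into the $t^{-\alpha}$ bound using $\alpha<1$. Everything else reduces to the elementary facts that $\int_0^1 u^{\alpha-1}\,du$ and $\int_1^\infty u^{-\alpha-1}\,du$ are finite, together with the algebraic observation $t^{-\alpha(2+\eta)}\le t^{-\alpha}$ for $t\ge 1$ and $\eta\ge 0$.
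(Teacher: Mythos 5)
Your proof is correct and takes essentially the same approach as the paper: $t\in(0,1]$ is handled by dropping both minima, and for $t>1$ the integral is split at $s=t/2$, with the kernel bounded by $(t/2)^{-\alpha-1}$ on $[0,t/2]$ and the factor $\bigl(\min\{s^{-\alpha},1\}\bigr)^{1+\eta}\le (t/2)^{-\alpha(1+\eta)}$ pulled out on $[t/2,t]$, where the kernel integrates to the constant $\int_0^\infty\min\{\tau^{-\alpha-1},\tau^{\alpha-1}\}\,d\tau$. Your three-case distinction on $[0,t/2]$ (and the separate treatment of the transition region $t\in(1,2]$) is harmless but unnecessary: the paper simply bounds $\bigl(\min\{s^{-\alpha},1\}\bigr)^{1+\eta}\le 1$ and uses $\min\{(t-s)^{-\alpha-1},(t-s)^{\alpha-1}\}\le (t-s)^{-\alpha-1}\le (t/2)^{-\alpha-1}$, which gives $(t/2)^{-\alpha-1}\cdot(t/2)=2^{\alpha}t^{-\alpha}$ at once for all $t>1$.
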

\begin{proof}
For $t\in (0,1]$, the integral in inequality \eqref{ineq:alpha:eta} is bounded by 
$$
\int_0^t(t-s)^{\alpha-1}\, ds = \frac{t^\alpha}{\alpha}\le \frac{1}{\alpha}.
$$

For $t>1$, we decompose the integral in \eqref{ineq:alpha:eta} as 
$\int_0^{t/2}...\, ds +\int_{t/2}^t...\, ds$ and,
by using the inequality $\min\{a,b\}\le a$,
we estimate first term by the quantity
\begin{align*}
     \int_0^{t/2}  
            (t-s)^{-\alpha-1}\cdot 1\, ds
            \le \left(\frac{t}2\right)^{-\alpha-1}\cdot \frac{t}2 = t^{-\alpha} 2^{\alpha}.
\end{align*}
On the other hand, the integral with respect to  $s\in [t/2,t]$ is estimated by 
 \begin{align*}
 \left(\frac{t}{2}\right)^{-\alpha(1+\eta)}
     &\int_{t/2}^t  
        \min\{ (t-s)^{-\alpha-1}, (t-s)^{\alpha-1}\}\, ds\\
        &= 
        \left(\frac{t}{2}\right)^{-\alpha(1+\eta)} \int_{0}^{t/2}  
        \min\{\tau^{-\alpha-1}, \tau^{\alpha-1} \}
         \, d\tau \le  
         \left(\frac{t}{2}\right)^{-\alpha(1+\eta)}C(\alpha),
 \end{align*}
where $C(\alpha)= \int_{0}^{\infty}  
        \min\{\tau^{-\alpha-1}, \tau^{\alpha-1} \}
         \, d\tau<\infty.$
\end{proof}

    \subsection{Nonlinear instability} \label{sec:instability}
Now, we are in a position to formulate conditions that lead to the instability of solutions.  
Recall that, in view of Definition \ref{def:stab}, to show the instability of the zero solution of equation \eqref{eq:FRDS}, it suffices to find $R_0>0$, a sequence of initial conditions $\{u_{0,k}\}_{k=0}^\infty$ satisfying $\|u_{0,k}\|\to 0$, and a sequence of times $T_k>0$ such that the corresponding solutions $u_k=u_k(t)$ satisfy $\|u_k(T_k)\|=R_0$ for all $k\in \mathbb{N}$.

We prove the instability of the zero solution of nonlinear equation \eqref{eq:FRDS} under
the following instability assumption of the zero solution of the linear equation $\partial_t^\alpha u=\A u$.

\begin{assumption}[Linear instability]\label{ass:instab}
The operator $(\A , D(\A ))$
has an eigenvalue $\lambda \in \C $ with the following properties
    \begin{itemize}
        \item 
    $\lambda\neq 0$ and  $\left|\arg(\lambda )\right| <\frac{\alpha\pi}{2}$;
     \item
     for every $\omega>0$ there exists $C=C(\omega)>0$
    such that 
    \begin{equation}\label{Sa:instab}
        \|S_\alpha(t)u_0\|
        \le C(\omega) e^{(\Re \lambda  +\omega)^{1/\alpha}t}\|u_0\|
    \end{equation}
    \text{and}
    \begin{equation}\label{Pa:instab}
        \|t^{\alpha-1} P_\alpha(t)u_0\|
        \le C(\omega) e^{(\Re \lambda  +\omega)^{1/\alpha}t}\|u_0\|,
    \end{equation}
    for all $t\ge 0$ and $u_0\in\X$.
    \end{itemize}
\end{assumption}

\begin{remark}
    Note that we do not require $\lambda$ in Assumption \ref{ass:instab} to satisfy the relation $\Re\lambda=s(\A)$, as discussed in Proposition \ref{prop:instab}; see Remark \ref{rem:instab} for more comments.
\end{remark}

\begin{remark}
For simplicity of presentation, we assume that $\lambda$ in Assumption \ref{ass:instab} is an eigenvalue.
In fact, following the reasoning of \cite{SS98} (see also \cite[Theorem 2.4]{CKKW21}), our instability result can be extended to the case where this number belongs to the boundary of the spectrum $\sigma(\A )$
and is an approximate eigenvalue.
\end{remark}

\begin{theorem}[Nonlinear instability]\label{thm:instab}
   Under  Assumption~\ref{ass:instab} and for sufficiently flat nonlinearities at the origin, namely, satisfying,
        \begin{equation}\label{f:flat:instab}
               \|f(t, \xi)\| \leq \kappa \|\xi\|^{1+\eta}  \quad \text{for all} \quad \|\xi\|\leq R \quad\text{and}\quad t\ge 0,
        \end{equation}
for some constants $\eta>0$, $\kappa>0$ and  $R>0$,
   the zero solution of  equation \eqref{eq:FRDS} is unstable.
\end{theorem}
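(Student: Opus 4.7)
\emph{Setup.} The plan is to run a Shatah–Strauss type instability argument adapted to the resolvent families $S_\alpha, P_\alpha$. Let $\phi\in D(\A)$ be the eigenvector associated with $\lambda$, normalised so that $\|\phi\|=1$, and consider the one-parameter family of initial data $u_{0,\delta}:=\delta\phi$ for $\delta\in(0,\delta_0]$. Formula~\eqref{Sa:eigen} gives $S_\alpha(t)u_{0,\delta}=\delta E_\alpha(\lambda t^\alpha)\phi$, and Lemma~\ref{lem:ML}(2), combined with the continuity and non-vanishing of $E_\alpha(\lambda t^\alpha)$ near $t=0$, supplies positive constants $c_1,c_2$ with
\begin{equation}
 c_1 e^{\sigma t}\le |E_\alpha(\lambda t^\alpha)|\le c_2 e^{\sigma t}\qquad\text{for every } t\ge 0,
\end{equation}
where $\sigma:=(\Re\lambda)^{1/\alpha}>0$. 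The target is to produce a fixed $R_0>0$ and a sequence of times $T_\delta\to\infty$ at which the solution satisfies $\|u(T_\delta)\|\ge R_0$, while $\|u_{0,\delta}\|=\delta\to 0$; this contradicts Definition~\ref{def:stab}.

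\emph{Bootstrap and Duhamel estimate.} Write $u(t)=\delta E_\alpha(\lambda t^\alpha)\phi+w(t)$, where
\begin{equation}
 w(t)=\int_0^t(t-s)^{\alpha-1}P_\alpha(t-s)\,f(s,u(s))\,ds,
\end{equation}
and introduce the bootstrap time
\begin{equation}
 \bar T_\delta:=\sup\bigl\{t\ge 0\,:\,\|w(s)\|\le \delta|E_\alpha(\lambda s^\alpha)|\text{ for every }s\in[0,t]\bigr\}.
\end{equation}
On $[0,\bar T_\delta]$ one has $\|u(s)\|\le 2c_2\delta e^{\sigma s}$, so for $\rho$ chosen small the bound $\|u(s)\|\le R$ required by~\eqref{f:flat:instab} holds, giving $\|f(s,u(s))\|\le \kappa(2c_2)^{1+\eta}\delta^{1+\eta}e^{(1+\eta)\sigma s}$. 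The decisive quantitative choice is to fix $\omega>0$ so small that $\nu:=(\Re\lambda+\omega)^{1/\alpha}<(1+\eta)\sigma$, which is possible because $\eta>0$ and $\nu\to\sigma$ as $\omega\to 0$. Inserting these bounds into the estimate~\eqref{Pa:instab} then yields
\begin{equation}
 \|w(t)\|\le C(\omega)\kappa(2c_2)^{1+\eta}\delta^{1+\eta}\int_0^t e^{\nu(t-s)}e^{(1+\eta)\sigma s}\,ds\le C_\star\,\delta^{1+\eta}e^{(1+\eta)\sigma t},
\end{equation}
with $C_\star:=C(\omega)\kappa(2c_2)^{1+\eta}\big/\bigl((1+\eta)\sigma-\nu\bigr)$.

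\emph{Closing the bootstrap.} Set $\rho^\star:=(c_1/C_\star)^{1/\eta}$ and fix $\rho\in(0,\rho^\star)$ so small that $2c_2\rho\le R$ and $C_\star\rho^{\eta}\le c_1/2$. Define $T_\delta$ by $\delta e^{\sigma T_\delta}=\rho$. Suppose, for contradiction, that $\bar T_\delta\le T_\delta$; continuity of $w$ forces $\|w(\bar T_\delta)\|=\delta|E_\alpha(\lambda\bar T_\delta^\alpha)|$, so combining with the Duhamel bound above,
\begin{equation}
 c_1\delta e^{\sigma\bar T_\delta}\le \|w(\bar T_\delta)\|\le C_\star\delta^{1+\eta}e^{(1+\eta)\sigma\bar T_\delta}\quad\Longleftrightarrow\quad \delta e^{\sigma\bar T_\delta}\ge \rho^\star,
\end{equation}
contradicting $\delta e^{\sigma\bar T_\delta}\le\delta e^{\sigma T_\delta}=\rho<\rho^\star$. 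Hence $\bar T_\delta>T_\delta$, and at $t=T_\delta$ the reverse triangle inequality gives
\begin{equation}
 \|u(T_\delta)\|\ge c_1\delta e^{\sigma T_\delta}-C_\star\delta^{1+\eta}e^{(1+\eta)\sigma T_\delta}\ge c_1\rho-C_\star\rho^{1+\eta}\ge \tfrac{1}{2}c_1\rho=:R_0>0.
\end{equation}
Since $\delta\to 0$ while $\|u(T_\delta)\|\ge R_0$ remains fixed, the zero solution of~\eqref{eq:FRDS} is unstable.

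\emph{Main obstacle.} The delicate feature is that the explicit identity $S_\alpha(t)u_{0,\delta}=\delta E_\alpha(\lambda t^\alpha)\phi$ delivers a \emph{sharp} lower exponential rate $\sigma$ for the linear part of $u$, whereas the abstract bound~\eqref{Pa:instab} on $P_\alpha$ acting on arbitrary vectors only provides the slightly larger rate $\nu(\omega)>\sigma$. The strict superlinearity $\eta>0$ of the nonlinearity is precisely what creates a margin, allowing $\omega$ to be tuned so that $(1+\eta)\sigma$ strictly dominates $\nu(\omega)$; this is exactly what makes the Duhamel convolution inherit the faster exponent $(1+\eta)\sigma$ of the nonlinearity rather than being limited by the kernel rate, and thus what allows the bootstrap $\|w(t)\|\le \delta|E_\alpha(\lambda t^\alpha)|$ to close.
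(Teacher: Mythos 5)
Your argument is essentially the paper's own proof of Theorem \ref{thm:instab}: the same data $\delta\phi$ along the eigenfunction, the identity \eqref{Sa:eigen}, a bootstrap on $w=u-S_\alpha(t)\delta u_0$ with threshold proportional to $\delta|E_\alpha(\lambda t^\alpha)|$, the choice of $\omega$ so that $(\Re\lambda+\omega)^{1/\alpha}<(1+\eta)(\Re\lambda)^{1/\alpha}$, estimate \eqref{Pa:instab} inside Duhamel's formula, and the lower bound \eqref{Ea:lower} to force an order-one norm at times $T_\delta\to\infty$; your bookkeeping (fixed escape level $\rho$ and escape time $T_\delta$) is a cleaner repackaging of the paper's $T_0$, $T$, $T_*$, and your convolution bound even avoids the paper's extra factor $t$. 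One caveat: the two-sided bound $c_1e^{\sigma t}\le|E_\alpha(\lambda t^\alpha)|\le c_2e^{\sigma t}$ claimed for \emph{all} $t\ge0$ overstates Lemma \ref{lem:ML}, which gives the lower bound only for $t\ge T_1$, and you invoke it at the exit time $\bar T_\delta$, which may be small; this requires non-vanishing of $E_\alpha(\lambda t^\alpha)$ on the whole compact interval $[0,T_1]$ (you assert it only ``near $t=0$''), or, more simply, replacing the bootstrap threshold by $\tfrac{c_1}{2}\delta e^{\sigma t}$, after which the lower bound is needed only at the single large time $T_\delta$ --- a looseness the paper's proof shares, since its threshold is likewise $\tfrac{\delta}{2}|E_\alpha(\lambda\tau^\alpha)|$.
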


\begin{proof}
    Fix $u_0 \in \X$ with $\|u_0\|= 1$ as the eigenfunction corresponding to the eigenvalue $\lambda$ from Assumption \ref{ass:instab}.
    We may assume that,
     for all sufficiently small $\delta \in (0,\min\{1,R\})$ (where $R$ is from assumption \eqref{f:flat:instab}),
    there exists 
    a unique global-in-time  solution $u_\delta \in C\big([0,\infty); \X\big) $ of equation \eqref{duhamel0} 
	with the initial datum $u_0$ replaced by $\delta u_0$. Moreover, we may also assume that this solution satisfies $\|u_\delta(t)\|< R$
for all $t\geq 0$; indeed,  the existence of a sequence $\delta_k\to 0$ and $T_k>0$ such that the corresponding solution satisfies 
$\|u_{\delta_k}(T_k)\|=R$ immediately implies the instability of zero solution (see Definition \ref{def:stab}).

Recall that $S_\alpha(\tau) u_0=E_\alpha(\lambda  \tau^\alpha)u_0$ (see comments on equation \eqref{Sa:eigen}).
We define two numbers
	\begin{equation}
		T 
		= 
		\sup 
		\left\lbrace 
		t>0
		: 
		\, 
		\| 
		u_\delta(\tau) - S_\alpha(\tau) \delta u_0 
		\|
		\le 
		\frac{\delta}{2}
        \left|E_\alpha(\lambda  \tau^\alpha)\right|
		\text{ for all } 
		\tau\in[0,t] 
		\right\rbrace 
		\label{eq:TDef}
	\end{equation}
    and $T_0>0$  such that $\delta \left|E_\alpha(\lambda  T_0^\alpha)\right|=2$. Such a number  $T_0>0$ exists for all small $\delta>0$ by 
    the exponential growth of $ \left|E_\alpha(\lambda  t^\alpha)\right|$ obtained 
   in Lemma \ref{lem:E:E} for $|\arg \lambda |<\alpha\pi/2$.

	If either $T>T_0$ or $T= \infty$, 
	then the zero solution is unstable.
	Indeed, 
	by the definitions of $T$ and $T_0$ combined with relation \eqref{Sa:eigen}, we obtain the inequality
	\begin{equation}
    \begin{split}
		\|
		u_\delta(T_0)
		\|
		&\ge 
		\|
		S_\alpha(T_0)
		\delta 
		 u_0
		\|
		- 
		\frac{\delta}{2} 
        \left|E_\alpha(\lambda  T_0^\alpha)\right|
        =\frac{\delta}{2} 
        \left|E_\alpha(\lambda  T_0^\alpha)\right| =1
        \end{split}
	\end{equation}
	which imply the instability of the zero solution.
	
	Next, we suppose that $T \le T_0$ and, for every $t\in [0,T]$, we consider the mild representation of equation \eqref{eq:FRDS} with the initial condition $\delta u_0$
	\begin{equation}\label{duh:instab}
		u_\delta(t) 
		- 
		S_\alpha(t)
		\delta 
		u_0 
		= 
		\int_0^t (t-\tau)^{\alpha-1} P_\alpha(t-\tau) f(\tau, u_\delta(\tau)) \, d\tau.   
	\end{equation}
By assumption \eqref{f:flat:instab}, there exist $\kappa>0$ and $C(\eta)>0$ such that 
for $\|u_\delta(\tau)\|<R$, we have
\begin{equation}\label{f:instab:2}
\begin{split}
    \|f(\tau, u_\delta(\tau))\|&\le \kappa\|u_\delta(\tau)\|^{1+\eta}\\ &\le \kappa C(\eta)
    \left(
    \|u_\delta(\tau)-S_\alpha(\tau) \delta u_0\|^{1+\eta} +\|S_\alpha(\tau) \delta u_0\|^{1+\eta}
    \right).
    \end{split}
\end{equation}
We apply the norm $\|\cdot\|$ to both sides of equation \eqref{duh:instab} for $t\in [0,T]$.
By using estimates 
\eqref{Sa:instab}-\eqref{Pa:instab} with arbitrary $\omega>0$,
inequality
\eqref{f:instab:2}, the definition of $T$ in \eqref{eq:TDef},
and relation \eqref{Sa:eigen}, we obtain
	\begin{equation}
        \begin{split}
		 \| 
		u_\delta(t)  &-  S_\alpha(t)\delta u_0 
		\| 
		 \\
		\le &
		C 
		\int_0^t 
        e^{(\Re\lambda +\omega)^{1/\alpha}(t-\tau)}
		\left(
		\| 
		u_\delta(\tau)  
        -
        S_\alpha(\tau)
		\delta 
		u_0 
		\|^{1+\eta} 
		+ 
		\| 
		S_\alpha(\tau)
		\delta 
		u_0
		\|^{1+\eta}
		\right) \, d\tau\\
		\le &
		C 
		\int_0^t 
		e^{(\Re\lambda +\omega)^{1/\alpha}(t-\tau)}
		\left(
		\frac{\delta^{1+\eta}}{2^{1+\eta}}
        |E_\alpha(\lambda  \tau^\alpha)|^{1+\eta}
		+ 
		\delta^{1+\eta} e^{(\Re\lambda)^{1/\alpha}(1+\eta)\tau}
		\right) \, d\tau\\
        \le &
		C \delta^{1+\eta}
		\int_0^t 
		e^{(\Re\lambda +\omega)^{1/\alpha}(t-\tau)}
		 e^{(\Re\lambda )^{1/\alpha}(1+\eta)\tau}
		\ \, d\tau.
		\label{eq:DuhDifEs:0}
        \end{split}
	\end{equation}
Now, choosing $\omega> 0$ such that 
$$
(\Re \lambda+\omega)^{1/\alpha} < (\Re \lambda)^{1/\alpha}(1+\eta)
$$
we obtain from  estimate \eqref{eq:DuhDifEs:0}
\begin{equation}
        \begin{split}
		 \| 
		u_\delta(t)  &-  S_\alpha(t)\delta u_0 
		\| 
  \le 
		C\delta^{1+\eta} t e^{(\Re\lambda)^{1/\alpha}(1+\eta) t}.
		\label{eq:DuhDifEs}
        \end{split}
        ,\qquad{\color{blue} 0\le t\le T\le T_0}
	\end{equation}
In particular, 
by the definition of the number $T$ and by inequality \eqref{eq:DuhDifEs} for $t = T$, 
	we have the relations
	\begin{equation}\label{rel:instab}
		\frac\delta2 
		 |E_\alpha(\lambda  T^\alpha)|
		= 
		\| 
		u_\delta(T) 
		- 
		S_\alpha(T) \delta u_0 
		\|
		\le 
		C
		\delta^{\eta+1} T
		e^{(\Re\lambda)^{1/\alpha}(1+\eta)T}.
	\end{equation}
Note that, for sufficiently small $\delta>0$,
there exists 
	the number $T_*\in (0,T]$ such that  
	\begin{equation}\label{Ea:equality}
\frac\delta2 
		 |E_\alpha(\lambda  T_*^\alpha)|
		= 
		C
		\delta^{\eta+1} T_*
		e^{(\Re\lambda)^{1/\alpha}(1+\eta)T_*}
	\end{equation}
    which is an immediate consequence of the lower bound of $E_\alpha(\lambda T)$ in \eqref{Ea:lower}
    (Notice that $T_*$ is arbitrarily large when $\delta\to 0$).
Thus, for $\delta>0$  small enough such that $\delta/2>C\delta^{\eta+1}$ with a constant $C>0$ from inequality \eqref{rel:instab},
by inequality \eqref{eq:DuhDifEs} with $t=T_*$ 
and by equations \eqref{Sa:eigen} and \eqref{Ea:equality} we have 
\begin{equation} \label{uT*:instab}
		\left\| 
		u(T_*) 
		\right\|
		\ge
		\|
		S_\alpha(T_*)
		\delta  
		u_0
		\|
		-
		C\delta^{1+\eta} T_* e^{(\Re\lambda +\omega)^{1/\alpha}(1+\eta)T_*}
		= 
		\frac{\delta}{2} |E_\alpha(\lambda  T_*^\alpha)|.
	\end{equation}
    
Now, we come back to equation \eqref{Ea:equality}. By using the lower bound for $E_\alpha(\lambda T_*)$ from expression \eqref{Ea:lower} for sufficiently small $\delta$, we obtain the inequality
$$
\frac\delta2 
		 |E_\alpha(\lambda  T_*^\alpha)|
		= 
		C
		\left(\delta T_*^{1/(1+\eta)}
		e^{(\Re\lambda)^{1/\alpha}T_*}\right)^{\eta+1}
        \leq C(\alpha,\lambda, \omega, \eta) \left(\frac\delta2 
		 |E_\alpha(\lambda  T_*^\alpha)|\right)^{\eta+2}
$$
which implies that 
$$\frac{\delta}{2}|E_\alpha(\lambda  T_*^\alpha)|\ge C>0 \quad \text{with}\quad  C= C(\alpha,\lambda, \omega, \eta)^{-1/(\eta+1)}>0
$$
independent of $\delta$ and of $T_*$. Applying this estimate on the right-hand side of inequality \eqref{uT*:instab} we complete the proof of instability of the zero solution of equation \eqref{eq:FRDS}.
\end{proof}


\section{Applications to fractional reaction-diffusion systems} \label{sec: Application}

\subsection{Fractional systems of differential equations}\label{sec:FDE}
We illustrate the abstract results from the previous sections by applying them to study stability of solutions of the initial-boundary value problem for systems of fractional reaction-diffusion equations. We begin with the simplest case of the following system of fractional differential equations
    \begin{align}  \label{eq: FDEsystem}
        \partial^\alpha_t v = g(v)
        \qquad\text{with }\ \alpha\in(0,1),
    \end{align}
where
    \begin{align}
        v=
            \begin{pmatrix}
                v_1(t,x)\\
                \vdots\\
                v_n(t,x)
            \end{pmatrix}
        \qquad\text{and}\qquad
        g(v)=
            \begin{pmatrix}
                g_1(v_1,...,v_n)\\
                \vdots\\
                g_n(v_1,...,v_n)
            \end{pmatrix} ,
    \end{align}
with $C^1$-nonlinearities $g_i$ for  $i\in\{1,\ldots,n\}$.
We assume that system \eqref{eq: FDEsystem} has a stationary solution, namely,
    \begin{equation}
        \bar{v}\in \mathbb{R}^n 
        \quad\text{such that}\quad
        g(\bar{v})=0.
    \end{equation}
Introducing the variables $u(t) = v(t)-\bar{v}$ and using the Taylor expansion, we obtain the fractional system
    \begin{equation}
        \partial^\alpha_t u = Au+ f(u),
        \label{eq: linearised}
    \end{equation}
where
    \begin{equation}
        A= 
        \begin{pmatrix}
            \frac{\partial g_1}{\partial v_1}(\bar{v}) &\ldots &\frac{\partial g_1}{\partial v_n}(\bar{v})\\
            \vdots &\ddots &\vdots\\
            \frac{\partial g_n}{\partial v_1}(\bar{v}) &\ldots &\frac{\partial g_n}{\partial v_n}(\bar{v})
        \end{pmatrix}
    \label{eq: linmatrix}
    \end{equation}
and $f$ is the corresponding Taylor reminder.

We are in a position to formulate the linearization principle for general systems of fractional differential equations 
\eqref{eq: FDEsystem}.
    
    \begin{theorem}[Nonlinear stability]\label{thm:stab:fde}
    Consider a stationary solution $\bar{v}$ of system \eqref{eq: FDEsystem}.
        \begin{enumerate}
            \item Assume that all eigenvalues $\lambda$ of the matrix $A$ in \eqref{eq: linmatrix} satisfy
                \begin{align*}
                    |\arg(\lambda)|>\frac{\alpha \pi}{2}.
                \end{align*}
            Then the stationary solution $\bar{v}$ is stable. Moreover, there exist constants $\delta>0$ and $C>0$ such that if $\|v(0)-\bar{v}\|\le \delta$, then the solution $v(t)$ of equation \eqref{eq: FDEsystem} with the initial datum $v(0)$ exists for all $t>0$ and satisfies
                \begin{align}
                    \|v(t)-\bar{v}\|\le C\|v(0)-\bar{v}\|\ \min\{t^{-\alpha},1\}.
                \end{align}

            \item Suppose that there exists an eigenvalue $\lambda$ of the matrix $A$ in \eqref{eq: linmatrix} such that
                \begin{align}
                    \lambda\ne 0
                    \qquad\text{and}\qquad
                    |\arg(\lambda)|<\frac{\alpha \pi}{2}.
                \end{align}
                Suppose, moreover, that the nonlinearity $g=g(v)$ has $C^2$- regularity.
            Then, the equilibrium solution $\bar{v}$ of equation \eqref{eq: linearised} is unstable.
        \end{enumerate}  
    \end{theorem}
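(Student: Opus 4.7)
The plan is to reduce the two parts of the theorem to direct applications of Theorems \ref{thm:stab} and \ref{thm:instab}, after verifying the relevant assumptions in the finite-dimensional setting where $\mathcal{A}=A$ is a bounded linear operator on $\mathbb{C}^n$. Assumption \ref{ass:A} is immediate: $A$ generates the uniformly continuous semigroup $T(t)=e^{tA}$, for which the Spectral Mapping Theorem always holds. By the subordination formulas \eqref{Salpha}--\eqref{Palpha} combined with Lemma \ref{lem:E:E}, the resolvent families are the matrix Mittag-Leffler functions
\begin{equation}
S_\alpha(t)=E_\alpha(t^\alpha A), \qquad t^{\alpha-1}P_\alpha(t)=t^{\alpha-1}E_{\alpha,\alpha}(t^\alpha A),
\end{equation}
and their norms can be controlled via a Jordan decomposition $A=PJP^{-1}$, because a Jordan block of size $k$ at eigenvalue $\mu$ contributes entries involving the scalar function $E_\alpha(\mu t^\alpha)$ and its first $k-1$ derivatives (and analogously for $E_{\alpha,\alpha}$).

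For Part (1), every eigenvalue $\mu$ of $A$ satisfies $|\arg\mu|>\alpha\pi/2$, so term-by-term differentiation of the asymptotic expansion \eqref{eq: asymptotics Ealphabeta stable:0} yields $|E_\alpha^{(j)}(\mu t^\alpha)|=O(t^{-\alpha})$ and $|E_{\alpha,\alpha}^{(j)}(\mu t^\alpha)|=O(t^{-2\alpha})$ as $t\to\infty$, which combined with their uniform boundedness on $[0,1]$ produce the two estimates of Lemma \ref{lem:ML}(1) entrywise. Summing over Jordan blocks verifies Assumption \ref{ass:stab:resolvent}. Since $g\in C^1$, the Taylor remainder $f(u):=g(\bar v+u)-Au$ satisfies $\sup_{t\ge 0}\|f(t,u)\|=\|f(u)\|=o(\|u\|)$ as $\|u\|\to 0$, so \eqref{f:flat} holds and Theorem \ref{thm:stab} delivers the stability together with the algebraic decay rate.

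For Part (2), I would select the eigenvalue $\lambda$ with $|\arg\lambda|<\alpha\pi/2$ that maximizes $\Re(\mu^{1/\alpha})$ over all eigenvalues of $A$ lying in this sector; such $\lambda$ exists by hypothesis. Combining the Jordan-block analysis with the sharp asymptotic \eqref{eq: asymptotics Ealphabeta unstable:0} shows that the dominant exponential growth of $\|S_\alpha(t)\|$ and $t^{\alpha-1}\|P_\alpha(t)\|$ is at most $\Re(\lambda^{1/\alpha})$, with only polynomial prefactors from Jordan-block sizes and algebraically decaying contributions from eigenvalues with $|\arg\mu|>\alpha\pi/2$. The trigonometric inequality
\begin{equation}
\Re(\lambda^{1/\alpha})=|\lambda|^{1/\alpha}\cos(\arg\lambda/\alpha)\le (|\lambda|\cos(\arg\lambda))^{1/\alpha}=(\Re\lambda)^{1/\alpha},
\end{equation}
which reduces to $(\cos\phi)^\alpha\le\cos(\alpha\phi)$ on $[0,\pi/2)$ for $\alpha\in(0,1)$ (verified by a second-derivative argument near $\phi=0$), together with the strict monotonicity $(\Re\lambda)^{1/\alpha}<(\Re\lambda+\omega)^{1/\alpha}$ for $\omega>0$, absorbs the polynomial prefactors into $e^{(\Re\lambda+\omega)^{1/\alpha}t}$ at the cost of a constant $C(\omega)$, verifying \eqref{Sa:instab}--\eqref{Pa:instab}. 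The $C^2$ regularity of $g$ produces $\|f(u)\|\le\kappa\|u\|^2$ for small $\|u\|$, so \eqref{f:flat:instab} holds with $\eta=1$, and Theorem \ref{thm:instab} yields the instability.

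The main obstacle will be the careful bookkeeping of the Jordan-block contributions combined with the trigonometric inequality $\Re(\lambda^{1/\alpha})\le(\Re\lambda)^{1/\alpha}$, which is precisely what matches the scalar exponential rate required in Assumption \ref{ass:instab} to the actual exponential growth of the matrix resolvent. Everything else reduces to the scalar Mittag-Leffler estimates already collected in Lemma \ref{lem:ML} and the subordination representation of $S_\alpha$ and $P_\alpha$.
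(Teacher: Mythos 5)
Your top-level strategy coincides with the paper's: verify Assumptions \ref{ass:stab:resolvent} and \ref{ass:instab} for the finite-dimensional operator by reducing to Jordan blocks and scalar Mittag-Leffler asymptotics, then invoke Theorems \ref{thm:stab} and \ref{thm:instab}, with the $C^1$/$C^2$ Taylor remainders supplying \eqref{f:flat} and \eqref{f:flat:instab} with $\eta=1$. Where you genuinely diverge is in how the Jordan-block resolvent estimates (the content of the paper's Theorem \ref{lem:Jordan}) are produced. The paper never touches derivatives of Mittag-Leffler functions: for the stable blocks it uses the Cauchy problems \eqref{Sa:Cauchy} and \eqref{Pa:Cauchy}, which for a Jordan block become a triangular system solved recursively by the fractional Duhamel formulas \eqref{Duh:0} and \eqref{duh:RL}, and the decay then follows from Lemma \ref{lem:ML}(1) together with the convolution estimate of Lemma \ref{lem:integral}; for the unstable blocks it simply bounds $\|e^{tA}u_0\|\le C(\omega)e^{(\Re\lambda+\omega)t}\|u_0\|$ and subordinates via Lemma \ref{lem:SPE} and Lemma \ref{lem:ML}(2), which delivers \eqref{Sa:instab}--\eqref{Pa:instab} directly with the rate $(\Re\lambda+\omega)^{1/\alpha}$ and makes your trigonometric inequality unnecessary. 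Your route (explicit matrix Mittag-Leffler functions, derivative asymptotics, and the comparison $\Re(\lambda^{1/\alpha})\le(\Re\lambda)^{1/\alpha}$) is sharper in that it tracks the true growth rate $\Re(\mu^{1/\alpha})$, at the price of more delicate bookkeeping.

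Three points in that bookkeeping need repair, though none is fatal. First, the entries of $E_\alpha(t^\alpha J)$ for a Jordan block $J=\mu I+N$ are $\tfrac{t^{\alpha j}}{j!}E_\alpha^{(j)}(\mu t^\alpha)$, not bare derivatives; once the factors $t^{\alpha j}$ are restored, your stated rates $E_\alpha^{(j)}(\mu t^\alpha)=O(t^{-\alpha})$ and $E_{\alpha,\alpha}^{(j)}(\mu t^\alpha)=O(t^{-2\alpha})$ no longer close for $j\ge 2$: you need the sharper $E_\alpha^{(j)}(z)=O(|z|^{-1-j})$ and $E_{\alpha,\alpha}^{(j)}(z)=O(|z|^{-2-j})$ in the sector $|\arg z|>\alpha\pi/2$ (the latter using $1/\Gamma(0)=0$), which do give $t^{\alpha j}E_\alpha^{(j)}(\mu t^\alpha)=O(t^{-\alpha})$ and $t^{\alpha-1+\alpha j}E_{\alpha,\alpha}^{(j)}(\mu t^\alpha)=O(t^{-\alpha-1})$ as required. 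Second, term-by-term differentiation of the expansions \eqref{eq: asymptotics Ealphabeta stable:0} and \eqref{eq: asymptotics Ealphabeta unstable:0} is not automatic for asymptotic series and should be justified (standard, e.g.\ via the contour-integral representation or identities expressing $E_{\alpha,\beta}'$ through other Mittag-Leffler functions), whereas the paper's recursion avoids the issue entirely. Third, the inequality $(\cos\phi)^\alpha\le\cos(\alpha\phi)$ on $[0,\pi/2)$ is true, but a second-derivative check at $\phi=0$ only proves it locally; a global argument (for instance, $\frac{d}{d\phi}\bigl[\ln\cos(\alpha\phi)-\alpha\ln\cos\phi\bigr]=\alpha\bigl(\tan\phi-\tan(\alpha\phi)\bigr)\ge0$) is needed. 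With these repairs your verification of \eqref{Sa:instab}--\eqref{Pa:instab} for the eigenvalue maximizing $\Re(\mu^{1/\alpha})$ in the sector is legitimate (Assumption \ref{ass:instab} does not insist on the paper's choice of the eigenvalue with maximal real part), and the rest of your argument matches the paper.
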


\begin{remark}
Theorem \ref{thm:stab:fde} has already been proven in other papers; however, we present its complete proof to illustrate our general stability results in action. More precisely, the stability and instability theorems for linear systems of fractional differential equations of the same fractional order were first obtained by Matignon \cite{M98} (see also \cite[Theorem~7.20]{diethelm2010analysis} for detailed calculations). The qualitative theory, including the asymptotic behavior of solutions to multi-order systems of linear fractional differential equations, can be found in the work by Diethelm {\it et al.}~\cite{DiethelmSiegmundTuan2017}. The nonlinear stability of constant steady states for systems of fractional differential equations \eqref{eq: FDEsystem} is proved in the works \cite{ZTYC15, CDST16}, and the corresponding results on nonlinear instability were recently published in \cite{CDST17}. We refer to recent work \cite{CTT20} for other related comments and references.
\end{remark}

In order to apply our general stability results, we should first prove the estimates required in Assumption \ref{ass:stab:resolvent} and  Assumption \ref{ass:instab} for
the resolvent families defined in \eqref{Salpha} and \eqref{Palpha},
where 
\begin{equation}\label{T:A}
    T(t)=e^{tA}=\sum_{k=0}^\infty \frac{t^kA^k}{k!}.
\end{equation}

\begin{theorem}[Resolvent estimates] \label{lem:Jordan}
Let $\alpha\in (0,1)$
and $A$ be an arbitrary $n\times n$ matrix with constant coefficients.
Denote by $\sigma(A)$ the set of eigenvalues of matrix $A$.
Consider the resolvent families $S_\alpha(t)$ and $P_\alpha(t)$ given by formulas \eqref{Salpha} and \eqref{Palpha}, where $T(t)$ is the semigroup \eqref{T:A}.
    \begin{enumerate}
        \item  If $\sigma(A)\subset \{\lambda\in\C\,:\, |\arg(\lambda)|>\frac{\alpha \pi}{2} \}$,
         then there exists a constant $K>0$ such that 
            \begin{equation} \label{Sa:stab:fde}
               \|S_\alpha (t)u_0\|\le K \|u_0\| \min\{t^{-\alpha},1\} 
            \end{equation}   
               \text{and}
            \begin{equation} \label{Pa:stab:fde}
               \|t^{\alpha-1}P_\alpha (t)u_0\|\le  K \|u_0\| \min\{t^{-\alpha-1},t^{\alpha-1}\},
           \end{equation}
        for  all $u_0\in\R^n$ and all $t> 0$. 

         \item  Assume $\{\lambda\in\sigma(A)\,:\, |\arg(\lambda)|<\frac{\alpha \pi}{2} \}\neq\emptyset$.
         Let $\lambda \in \sigma(A)$ satisfy 
         $$
         \Re \lambda = \max \left\{ \Re \mu \;:\: \mu \in \sigma(A), \quad |\arg(\mu)|<\frac{\alpha \pi}{2}\right\}.
         $$
         Then, for every $\omega>0$ there exists $C=C(\omega)>0$
    such that 
        \begin{equation}\label{Sa:instab:fde}
            \|S_\alpha(t)u_0\|
            \le C(\omega) e^{(\Re \lambda  +\omega)^{1/\alpha}t}\|u_0\|
        \end{equation}
    \text{and}
        \begin{equation}\label{Pa:instab:fde}
            \|t^{\alpha-1}P_\alpha(t)u_0\|
            \le C(\omega) e^{(\Re \lambda  +\omega)^{1/\alpha}t}\|u_0\|,
        \end{equation}
    for all $t\ge 0$ and $u_0\in\R^n$.
    \end{enumerate}
\end{theorem}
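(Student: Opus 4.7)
The plan is to reduce the estimates to the Jordan canonical form of $A$. Write $A = P J P^{-1}$ where $J$ is block-diagonal with Jordan blocks $J_\lambda = \lambda I_m + N$ and $N^m = 0$. The semigroup \eqref{T:A} inherits the block structure $T(t)=P\,e^{tJ}\,P^{-1}$, and substituting this into the subordination integrals \eqref{Salpha}, \eqref{Palpha} reduces the task to estimating $S_\alpha(t)$ and $P_\alpha(t)$ on a single Jordan block. Expanding $e^{st^\alpha J_\lambda} = e^{\lambda s t^\alpha}\sum_{k=0}^{m-1}(st^\alpha N)^k/k!$ (finite by nilpotence) and interchanging with the integral in $s$, Lemma~\ref{lem:E:E} identifies the resulting moments as derivatives of Mittag--Leffler functions:
\begin{equation}
S_\alpha(t)\big|_{J_\lambda}=\sum_{k=0}^{m-1}\frac{t^{\alpha k}}{k!}\,N^k\, E_\alpha^{(k)}(\lambda t^\alpha),\qquad P_\alpha(t)\big|_{J_\lambda}=\sum_{k=0}^{m-1}\frac{t^{\alpha k}}{k!}\,N^k\, E_{\alpha,\alpha}^{(k)}(\lambda t^\alpha),
\end{equation}
where differentiation under the integral is justified by the rapid decay of $\Psi_\alpha$.

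The next step is to extend Lemma~\ref{lem:ML} to the derivatives $E_\alpha^{(k)}$ and $E_{\alpha,\alpha}^{(k)}$. Differentiating the asymptotic expansions \eqref{eq: asymptotics Ealphabeta unstable:0} and \eqref{eq: asymptotics Ealphabeta stable:0} term by term (legitimate because they hold on closed sectors of analyticity) shows that, as $|z|\to\infty$, $|E_\alpha^{(k)}(z)|\le C_k|z|^{-k-1}$ and $|E_{\alpha,\alpha}^{(k)}(z)|\le C_k|z|^{-k-2}$ whenever $|\arg z|>\alpha\pi/2$ (the extra order of decay for $E_{\alpha,\alpha}$ stems from $1/\Gamma(0)=0$ killing the leading term of its stable expansion), while for $|\arg z|<\alpha\pi/2$ both derivatives grow like a polynomial in $|z|$ multiplied by $\exp(\Re z^{1/\alpha})$. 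Near $z = 0$ they are uniformly bounded by holomorphy.

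For Part (1), under the hypothesis $\sigma(A)\subset\{\lambda\in\C\,:\,|\arg\lambda|>\alpha\pi/2\}$, each term in the Jordan-block expansion satisfies, for $t\ge 1$,
\begin{equation}
|t^{\alpha k}E_\alpha^{(k)}(\lambda t^\alpha)|\le C\,t^{\alpha k -\alpha(k+1)}=C\,t^{-\alpha},\qquad |t^{\alpha-1}\cdot t^{\alpha k}E_{\alpha,\alpha}^{(k)}(\lambda t^\alpha)|\le C\,t^{\alpha-1+\alpha k-\alpha(k+2)}=C\,t^{-\alpha-1},
\end{equation}
while continuity in $t$ provides uniform bounds on $(0,1]$. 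Summing over the finite collection of Jordan blocks and conjugating by $P$ yields \eqref{Sa:stab:fde}--\eqref{Pa:stab:fde}.

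For Part (2), the Jordan block at the distinguished eigenvalue $\lambda$ produces terms of the form (polynomial in $t$)$\,\times\exp(\Re(\lambda^{1/\alpha})t)$. A short concavity argument --- the function $\alpha\mapsto\log\cos(\alpha\phi)-\alpha\log\cos\phi$ is strictly concave on $[0,1]$ with vanishing endpoints, hence positive in the interior --- yields the key inequality $\Re(\lambda^{1/\alpha})\le (\Re\lambda)^{1/\alpha}$. Since $x\mapsto x^{1/\alpha}$ is strictly increasing on $(0,\infty)$, the gap $(\Re\lambda+\omega)^{1/\alpha}-(\Re\lambda)^{1/\alpha}>0$ absorbs any polynomial factor for any $\omega>0$, producing \eqref{Sa:instab:fde}--\eqref{Pa:instab:fde}. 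Contributions from other eigenvalues $\mu$ are either algebraically bounded (when $|\arg\mu|>\alpha\pi/2$) or, when $|\arg\mu|<\alpha\pi/2$, exponentially dominated by $\lambda$ since $\Re\mu<\Re\lambda$ implies $(\Re\mu)^{1/\alpha}<(\Re\lambda)^{1/\alpha}$. The main technical obstacle is the careful treatment of eigenvalues lying on the critical line $|\arg\mu|=\alpha\pi/2$, where the Mittag-Leffler asymptotics transition between stable and unstable regimes; this requires either a sharper boundary analysis or a density argument absorbing the marginal behavior into the arbitrary parameter $\omega$.
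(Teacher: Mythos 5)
Your proposal is correct in substance, but it takes a genuinely different route from the paper. The paper also reduces to Jordan blocks, yet on each block it avoids Mittag--Leffler derivatives altogether: in the stable case it views the components of $S_\alpha(t)u_0$ and $t^{\alpha-1}P_\alpha(t)u_0$ as solutions of the triangular fractional systems \eqref{Sa:Cauchy} and \eqref{Pa:Cauchy}, solves them recursively through the scalar Duhamel formulas \eqref{Duh:0} and \eqref{duh:RL}, and closes the induction with the convolution estimate of Lemma~\ref{lem:integral}; in the unstable case it simply bounds $\|e^{tA}u_0\|\le C(\omega,\lambda)e^{(\Re\lambda+\omega)t}\|u_0\|$ on the block and feeds this into the subordination estimate of Lemma~\ref{lem:SPE} together with Lemma~\ref{lem:ML}, Part (2). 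Your closed-form expansion $S_\alpha(t)\big|_{J_\lambda}=\sum_{k}\frac{t^{\alpha k}}{k!}N^k E_\alpha^{(k)}(\lambda t^\alpha)$ (and its $E_{\alpha,\alpha}$ analogue) is correct and more explicit---it exhibits the exact algebraic orders---but it costs you an extension of Lemma~\ref{lem:ML} to the derivatives $E_\alpha^{(k)}$, $E_{\alpha,\alpha}^{(k)}$, which you only sketch; this can be justified (for instance by Cauchy estimates on disks of radius $\varepsilon|z|$ around the ray $\lambda t^\alpha$, which stay inside the sector because $\arg\lambda$ is strictly interior), or bypassed altogether in the unstable case by bounding $|e^{\lambda s t^\alpha}|\le e^{\Re\lambda\, s t^\alpha}$ under the subordination integral, as the paper effectively does.

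Two loose ends deserve mention. First, the ``critical line'' $|\arg\mu|=\alpha\pi/2$ that you flag as the main obstacle in Part (2) is not actually one: for such $\mu$ one has $\Re\big((\mu t^\alpha)^{1/\alpha}\big)=|\mu|^{1/\alpha}t\cos(\pi/2)=0$, so the exponential factor in expansion \eqref{eq: asymptotics Ealphabeta unstable:0} has modulus one and the corresponding block contributes at most polynomial growth in $t$; since the distinguished eigenvalue satisfies $\Re\lambda>0$, any polynomial is absorbed into $C(\omega)e^{(\Re\lambda+\omega)^{1/\alpha}t}$---precisely your ``absorb into $\omega$'' option, with no density argument needed. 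Second, for other eigenvalues $\mu$ with $|\arg\mu|<\alpha\pi/2$ you assert $\Re\mu<\Re\lambda$; equality may occur (the maximum can be attained by several eigenvalues), but your key inequality $\Re(\mu^{1/\alpha})\le(\Re\mu)^{1/\alpha}\le(\Re\lambda)^{1/\alpha}$, whose concavity proof is correct, together with the $\omega$-gap handles that case identically.
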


\begin{proof}
The idea used in this proof appeared already in \cite[Theorems 7.13 and 7.14]{diethelm2010analysis} and in \cite[Theorem 3.1]{DiethelmSiegmundTuan2017}. Here, we recall this reasoning in order to state the results in a form suitable for our applications.

\textit{Step 1. The matrix $A$ is a Jordan block.}
Assume first that
\begin{equation} \label{A:lambda}
A=    
 \begin{pmatrix}
                        \lambda       &1            &\ldots  &0      \\
                        0       &\lambda           &\ldots  &0      \\
                        \vdots  &\vdots  &\ddots  &\vdots \\
                        0       &0           &\ldots  &\lambda      
                    \end{pmatrix}.
\end{equation}
with some $\lambda\in \C$.
For every $u_0=\big(u^0_1,\cdots,u^0_n\big)\in\R^n$ and for matrix \eqref{A:lambda},
the resolvent 
$$u(t)=\big(u_1(t), ..., u_n(t)\big)=S_\alpha (t)u_0$$ satisfies the initial value problem \eqref{Sa:Cauchy}
which, in the case of the Jordan block \eqref{A:lambda}, reduces to the Cauchy problem for the system of fractional differential equations 
            \begin{align*}
                \partial_t^\alpha u_i &= \lambda_i u_i + u_{i+1}, \quad i\in \{1,\ldots, n-1\},\\ 
                \partial_t^\alpha u_n &= \lambda_n u_n.
            \end{align*}
Consequently, by the Duhamel principle \eqref{Duh:0},
    \begin{align}
       u_i(t) &= E_\alpha(\lambda t^\alpha) u^0_i + \int_0^t (t-s)^{\alpha-1} E_{\alpha,\alpha}\big(\lambda (t-s)^\alpha\big) u_{i+1}(s) \, d s,\quad i\in \{1,\ldots, n-1\},\\ 
       u_n(t) &= E_\alpha(\lambda t^\alpha) u^0_n.
    \end{align}

 If $\lambda\in\C$ satisfies $|\arg(\lambda)|>\frac{\alpha \pi}{2}$, then by Lemma \ref{lem:ML} part (1),
 \begin{align}
     \left\|E_\alpha(\lambda t^\alpha) u_0^i\right\|&\le m(\alpha, \lambda) \min\{t^{-\alpha},1\}\|u_0^i\|, \\
      \left\|t^{\alpha-1}E_{\alpha,\alpha}(\lambda t^\alpha)u_{i+1}\right\|&\le m(\alpha, \lambda) \min\{t^{-\alpha-1},t^{\alpha-1}\}\|u_{i+1}\|.
 \end{align}
 Thus, by induction from $i=n-1$ to $i=1$, we obtain
 \begin{align}
     \|u_i(t)\| &= \|E_\alpha(\lambda t^\alpha) u_0^i\| + \int_0^t \big\|(t-s)^{\alpha-1} E_{\alpha,\alpha}\big(\lambda (t-s)^\alpha\big) u_{i+1}(s)\big\| \, d s\\
     &\le m(\alpha, \lambda) \|u_0\|\left(  \min\{t^{-\alpha},1\} +\int_0^t  \min\{(t-s)^{-\alpha-1},(t-s)^{\alpha-1}\}
     \min\{s^{-\alpha},1\}\, ds \right)\\
     &\le C(\alpha,\lambda) \|u_0\|\min\{t^{-\alpha},1\},
 \end{align}
where the last inequality holds true by Lemma \ref{lem:integral}.

Estimates the internal resolvent $P_\alpha(t)$ are analogous, but now we use the Cauchy problem \eqref{Pa:Cauchy} for
$t^{\alpha-1}P_\alpha(t)$,
involving the fractional Riemann-Liouville derivative \eqref{RL:derivative}.
Thus, 
for every $v_0=\big(v_0^1,\cdots,v_0^n\big)\in\R^n$ and for matrix \eqref{A:lambda}, the vector 
 $$v(t)=\big(v_1(t), ..., v_n(t)\big)=t^{\alpha-1}P_\alpha (t)v_0,$$ satisfies the system 
            \begin{equation}\label{RL:vi:1}
            \begin{split}
                \RL\partial_t^\alpha v_i &= \lambda_i v_i + v_{i+1}, \quad i\in \{1,\ldots, n-1\},\\ 
                \RL\partial_t^\alpha v_n &= \lambda_n v_n.
                \end{split}
            \end{equation}
supplemented with the Cauchy type initial condition (involving the Riemann-Liouville fractional integral \eqref{fractional integral})
    \begin{equation}\label{RL:vi:2}
        \lim_{t\to 0} J^{1-\alpha}_t v_i(t)=v^i_0 ,\quad i\in \{1,\ldots, n\}.
    \end{equation}
Now, we recall (see, for example, \cite[Lemma 5.2]{diethelm2010analysis} or \cite[Ch.~7.2.1]{GKMR20}) that for  $\alpha \in (0,1)$ the 
Cauchy problem 
    \begin{equation}
        \RL\partial_t^\alpha v=\lambda v+f(t), 
        \quad  
        \lim_{t\to 0} J_t^{1-\alpha} v(t)  =v_0,
    \end{equation}
has the solution of the following form 
    \begin{equation}\label{duh:RL}
          v(t) = t^{\alpha-1}E_{\alpha,\alpha}(\lambda t^\alpha) v_0 + \int_0^t (t-s)^{\alpha-1} E_{\alpha,\alpha}\big(\lambda (t-s)^\alpha\big) f(s) \, d s.
    \end{equation}
Applying formula \eqref{duh:RL} to problem \eqref{RL:vi:1}-\eqref{RL:vi:2} we obtain the estimates of $t^{\alpha-1}P_\alpha(t)$
by proceeding analogously as in the first part of this proof.

 The proof is shorter for $\lambda\in\C$ in the matrix \eqref{A:lambda} satisfying $\lambda\neq 0$ and $|\arg(\lambda)|\le \frac{\alpha \pi}{2}$.
  In this case, we have $\Re\lambda>0$ (because $\alpha\in (0,1)$) and it is well known that (for the matrix $A$ in Jordan form \eqref{A:lambda}) for each $\omega>0$ there exists $C=C(\omega, \lambda)$ such that 
  \begin{equation}
     \|T(t)u_0\|=\|e^{tA}u_0\|\le C(\omega, \lambda) e^{(\Re \lambda+\omega) t}.
  \end{equation}
  Thus, the estimates of $S_\alpha(t)$ and $P_\alpha(t)$ are obtained from Lemma \ref{lem:SPE} combined with
the subordination formulas for the Mittag-Leffler functions in Lemma \ref{lem:E:E} and with
the inequalities in Lemma \ref{lem:ML}, Part (2).

\textit{Step 2. Arbitrary matrix $A$.}
Since, there exits an invertible matrix $P$ such that $A=PJP^{-1}$, where the matrix $J$ consists of Jordan blocks \eqref{A:lambda}, by using the series $T(t)$ from \eqref{T:A} in the formulas for $S_\alpha(t)$ and $P_\alpha(t)$ in \eqref{Salpha} and \eqref{Palpha}, our proof reduces to the analysis of Jordan blocks only.

If all eigenvalues of the matrix $A$ satisfy $|\arg(\lambda)|>\frac{\alpha \pi}{2}$, then by the stability estimates of each Jordan block \eqref{A:lambda} obtained in Step 1, complete the proof of inequalities \eqref{Sa:stab:fde} and \eqref{Pa:stab:fde}.

In the case of the instability estimates, 
we divide all eigenvalues of $A$ into two sets
\begin{equation}
    \sigma_{st}(A) =\left\{\lambda\in \sigma(A) \,:\, |\arg(\lambda)|\ge \frac{\alpha \pi}{2}\right\}, \quad  \sigma_{un}(A) =\left\{\lambda\in \sigma(A) \,:\, |\arg(\lambda)|< \frac{\alpha \pi}{2}\right\}.
\end{equation}
By assumption, among the eigenvalues in $\sigma_{un}(A)$, there is one with the maximal strictly positive real part, which we denote by $\lambda$ and which satisfies $\lambda\ne0$ and $|\arg(\lambda)|<\frac{\alpha \pi}{2}$.
The resolvents $S_\alpha(t)$ and $P_\alpha(t)$ restricted to subspaces corresponding to Jordan blocks with eigenvalues from $ \sigma_{st}(A)$
are uniformly bounded in $t>0$ (and decay in time).
Restricting the resolvents to subspaces corresponding to the eigenvalues from $ \sigma_{un}(A)$
we obtain the exponential estimates \eqref{Sa:instab:fde} and \eqref{Pa:instab:fde}.
 \end{proof} 
    \begin{proof}[Proof of Theorem \ref{thm:stab:fde}.]
The stability of the constant solution $\bar{v}$ of equation \eqref{eq: FDEsystem} is obtained by applying Theorem \ref{thm:stab}
to the modified problem \eqref{eq: linearised}. Decay estimates of the resolvent families $S_\alpha(t)$ and $P_\alpha(t)$ required by Assumption \ref{ass:stab:resolvent} are obtained in Theorem \ref{lem:Jordan}.
The Taylor reminder $f=f(u)$ in system \eqref{eq: linearised} satisfies assumption~\eqref{f:flat}.

Analogously, the instability of $\bar{v}$ is directly derived from Theorem \ref{thm:instab}.
 Here, we require the $C^2$-regularity of the nonlinear term $g=g(v)$ to guarantee the Taylor reminder $f=f(u)$ to satisfy assumption \eqref{f:flat:instab} with $\eta=1$.
    \end{proof}

\subsection{Constant solutions to fractional reaction-diffusion systems}\label{sec:FRD}
Next, we  apply the linearization principle to the fractional-reaction diffusion system with $\alpha\in (0,1)$ 
\begin{equation}\label{eq:FRD}
\begin{split}
    \partial_t^\alpha  v_1 &=   D_1\Delta v_1 +g_1(v_1,\dots,v_n),     \qquad x\in\Omega, \quad t>0,\\
    \vdots&\qquad \vdots \qquad\qquad\vdots \\
    \partial_t^\alpha  v_n &=   D_n\Delta v_n +g_n(v_1,\dots,v_n),  \qquad    x\in\Omega, \quad t>0,
\end{split}
\end{equation}
considered on a bounded open domain $\Omega \subset\R^n$ 
with a $C^2$-boundary $\partial\Omega$,
supplemented either 
with the Dirichlet boundary condition
    $$
    v_i=0, \qquad x\in \partial\Omega, \quad t>0, \quad i \in \{1,\dots,n\},
    $$
    or the Neumann boundary condition
    $$
    \nu\cdot \nabla v_i=0, \qquad x\in \partial\Omega, \quad t>0, \quad i \in \{1,\dots,n\}
    $$
and with an initial datum. Constant diffusion coefficients satisfy $D_i>0$ for all $i\in \{1,\dots,n\}$.

Recall that the Laplace operator with the suitable boundary condition and with the domain 
\begin{equation}\label{Delta:domain}
    D(\Delta) =\{u\in \bigcap_{p\ge 1} W^{2,p}_{loc}(\Omega)\,:\, u, \Delta u \in C(\overline\Omega), \; u \text{ satisfies the boundary conditions}\}
\end{equation}
generates (see, for example, \cite[Corollary 3.1.24]{L95} and references therein) a strongly continuous semigroup on the Banach space
    \begin{equation}\label{X:C}
        \X=
            \begin{cases}
                C(\overline\Omega),& \text{for the case of the Neumann boundary condition,}\\
                C_0(\overline\Omega),& \text{for the case of the Dirichlet boundary conditions,} 
            \end{cases}
    \end{equation}
supplemented with the usual norm $\|u\|_\infty=\max_{x\in \overline\Omega}|u(x)|$.
Thus, by Theorem \ref{thm:exist}, the initial-boundary value problem for system \eqref{eq:FRD} 
has a unique local-in-time solution for every
initial datum $u(0)\in \X^n$.

Now, assume that the boundary value problem for system \eqref{eq:FRD} has a constant stationary solution
    \begin{equation}
        \bar{v}\in \mathbb{R}^n 
        \quad\text{such that}\quad
        g_i(\bar{v})=0 \quad \text{for} \quad i\in \{1,\dots,n\}.
    \end{equation}
Notice that in the case of the Dirichlet boundary condition, only $\bar v=0$ is allowed.
As the usual practice, the variable $u(t) = v(t)-\bar{v}$ satisfies the system 
    \begin{equation} \label{eq:FRD:constant}
        \begin{pmatrix}
            \partial_t^\alpha u_1\\
            \vdots\\
            \partial_t^\alpha u_n
        \end{pmatrix}=
        \begin{pmatrix}
            D_1\Delta u_1\\
            \vdots\\
            D_n \Delta u_n
        \end{pmatrix} +
        A
        \begin{pmatrix}
    	u_1\\
            \vdots\\
            u_n
        \end{pmatrix}
        +f  
        \begin{pmatrix}
    	u_1\\
            \vdots\\
            u_n
        \end{pmatrix},
    \end{equation}
where
    \begin{equation}
        A= 
        \begin{pmatrix}
            \frac{\partial g_1}{\partial v_1}(\bar{v}) &\ldots &\frac{\partial g_n}{\partial v_n}(\bar{v})\\
            \vdots &\ddots &\vdots\\
            \frac{\partial g_n}{\partial v_1}(\bar{v}) &\ldots &\frac{\partial g_n}{\partial v_n}(\bar{v})
        \end{pmatrix}
        \quad \text{and} \quad f  \begin{pmatrix}
	u_1\\
        \vdots\\
        u_n
	\end{pmatrix} \quad \text{is the Taylor reminder.}
    \label{eq:lin:constant}
    \end{equation}
We define the associated linear 
 operator
 \begin{equation} \label{eq:A:FRD}
 \A \begin{pmatrix}
	u_1\\
        \vdots\\
        u_n
	\end{pmatrix} =
     \begin{pmatrix}
	D_1\Delta u_1\\
        \vdots\\
        D_n \Delta u_n
    \end{pmatrix} +
    A
    \begin{pmatrix}
	u_1\\
        \vdots\\
        u_n
    \end{pmatrix}
    ,
\end{equation}
supplemented either with the zero Dirichlet boundary condition or with the Neumann boundary condition and 
with the domain 
\begin{equation}\label{L:domain}
    D(\A) =\{u\in \bigcap_{p\ge 1} W^{2,p}_{loc}(\Omega)\,:\, u, \A u \in C(\overline\Omega), \; u \text{ satisfies the boundary conditions}\}.
\end{equation}

We are in a position to formulate our main result on stability and instability of constant solutions to system \eqref{eq:FRD} studied in the Banach space $\X$ defined in \eqref{X:C}.

    \begin{theorem}\label{thm:stab:FRD}
    Consider a constant stationary solution $\bar{v}\in\R^n$ of the initial-boundary value problem for system \eqref{eq:FRD}.
        \begin{enumerate}
            \item Assume that all eigenvalues $\lambda$ of the operator $\big(\A, D(\A)\big)$ defined in \eqref{eq:A:FRD}-\eqref{L:domain} 
satisfy
                \begin{align*}
                    |\arg(\lambda)|>\frac{\alpha \pi}{2}.
                \end{align*}
            Then the stationary solution $\bar{v}$ is stable. Moreover, there exist constants $\delta>0$ and $C>0$ such that if $\|v(0)-\bar{v}\|_\infty\le \delta$, then the solution $v(t)$ of system \eqref{eq:FRD} with the initial datum $v(0)$ exists for all $t>0$ and satisfies
                \begin{align}
                    \|v(t)-\bar{v}\|_\infty\le C\|v(0)-\bar{v}\|_\infty\ \min\{t^{-\alpha},1\}.
                \end{align}

            \item Assume that
            $$
            \left\{\lambda \in \sigma(\A)\;:\;  \lambda\ne 0
                    \quad\text{and}\quad
                    |\arg(\lambda)|<\frac{\alpha \pi}{2}\right\}\neq\emptyset
            $$
                Suppose, moreover, that the nonlinearity $g=g(v)$ has  $C^2$- regularity.
            Then the equilibrium solution $\bar{v}$ of equation \eqref{eq:FRD} is unstable.
        \end{enumerate}  
    \end{theorem}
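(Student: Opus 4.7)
The plan is to apply the abstract linearization principle to the reformulated equation \eqref{eq:FRD:constant} on the Banach space $\X$ defined in \eqref{X:C}. For part (1) we invoke Theorem \ref{thm:stab}, and for part (2) Theorem \ref{thm:instab}. In both cases, the flatness of the Taylor remainder $f$ at the origin (condition \eqref{f:flat} for part (1), and \eqref{f:flat:instab} with $\eta=1$ for part (2)) is immediate from the $C^1$, respectively $C^2$, regularity of $g$. The core task is therefore to verify the resolvent estimates of Assumption \ref{ass:stab:resolvent} and Assumption \ref{ass:instab} for the operator $\A$ given by \eqref{eq:A:FRD}-\eqref{L:domain}.

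A first observation is that the Laplacian with either boundary condition generates an analytic semigroup with compact resolvent on $\X$, and the bounded perturbation by the constant-coefficient matrix $A$ preserves both features. In particular, Assumption \ref{ass:A} holds (analytic semigroups satisfy the Spectral Mapping Theorem), and $\sigma(\A)$ consists of isolated eigenvalues of finite algebraic multiplicity. Moreover, if $\{-\mu_k\}_{k\ge 1}$ denote the eigenvalues of $\Delta$ (with $\mu_k\to\infty$), the spectrum factors as $\sigma(\A)=\bigcup_{k\ge 1}\sigma(M_k)$, where $M_k=-\mu_k D+A$ and $D=\mathrm{diag}(D_1,\dots,D_n)$. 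Since the eigenvalues of $M_k$ are, for large $k$, bounded perturbations of the points $\{-\mu_k D_i\}_{i=1}^n$, only finitely many eigenvalues of $\A$ lie in any half-plane $\{\Re z\ge -c\}$.

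For part (1), fix $c>0$ so small that the set $\Sigma=\{\lambda\in\sigma(\A):\Re\lambda\ge -c\}$ is finite; by hypothesis, every element of $\Sigma$ satisfies $|\arg\lambda|>\alpha\pi/2$ and in particular $0\notin\Sigma$. Let $P_\Sigma$ be the Riesz spectral projection associated with $\Sigma$, which is bounded on $\X$ and commutes with $\A$, with $T(t)$, and (via the subordination formulas \eqref{Salpha}-\eqref{Palpha}) with $S_\alpha(t)$ and $P_\alpha(t)$. On the finite-dimensional range $P_\Sigma\X$, the operator $\A$ reduces to a matrix whose spectrum is $\Sigma$, and Theorem \ref{lem:Jordan} part (1) supplies the decay estimates \eqref{SP:ass:stab}. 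On the complementary subspace $(I-P_\Sigma)\X$, the restriction of $\A$ still generates an analytic semigroup and has spectral bound at most $-c<0$, so Corollary \ref{cof:negative spectal bound} yields the same estimates. Summing the two pieces verifies Assumption \ref{ass:stab:resolvent}, and Theorem \ref{thm:stab} completes part (1).

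For part (2), the set $\{\lambda\in\sigma(\A):\lambda\ne 0,\,|\arg\lambda|<\alpha\pi/2\}$ is contained in $\{\Re\lambda>0\}$ (because $\alpha\pi/2<\pi/2$), and in view of the finiteness discussion above it is finite and, by hypothesis, nonempty; let $\lambda_*$ be an element of maximal real part and $\phi_*\in\X$ the associated eigenfunction. A Riesz projection onto the finite-dimensional generalized eigenspace corresponding to all eigenvalues with $\Re\lambda\ge\Re\lambda_*/2$ splits $\X$ into an invariant finite-dimensional subspace on which Theorem \ref{lem:Jordan} part (2) delivers the bounds \eqref{Sa:instab}-\eqref{Pa:instab} with $\lambda=\lambda_*$, and a complementary subspace on which the spectral bound is strictly less than $\Re\lambda_*$ so that Corollary \ref{cof:negative spectal bound} yields estimates absorbed by the right-hand sides of \eqref{Sa:instab}-\eqref{Pa:instab}. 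Thus Assumption \ref{ass:instab} is verified with the eigenvalue $\lambda_*$ and its eigenfunction $\phi_*$, and Theorem \ref{thm:instab} yields instability. The main technical obstacle lies in the Riesz decomposition step: proving that the spectral projection commutes with the resolvent families and that analyticity of the semigroup persists on each invariant subspace, so that Corollary \ref{cof:negative spectal bound} is applicable on the complement. Once this spectral setup is in place, the abstract Theorems \ref{thm:stab} and \ref{thm:instab} do the rest of the work.
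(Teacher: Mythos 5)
Your overall strategy coincides with the paper's: verify Assumptions \ref{ass:stab:resolvent} and \ref{ass:instab} by splitting the spectrum of $\A$ into finitely many ``low'' eigenvalues, handled by the matrix result (Theorem \ref{lem:Jordan}), and an infinite remainder with strictly negative spectral bound, handled through the subordination estimates (Lemma \ref{lem:SPE}, Corollary \ref{cof:negative spectal bound}), and then invoke Theorems \ref{thm:stab} and \ref{thm:instab}. The paper realizes this decomposition concretely, expanding $T(t)u_0$ in the eigenbasis $\{w^k\}$ of $-\Delta$ so that each mode evolves by $e^{tA_{D,k}}$ (Lemma \ref{lem:ADk} supplies $\sigma(\A)=\bigcup_k\sigma(A_{D,k})$ and the uniform bound $\Re\lambda\le-\omega_0$ for $k>k_0$), whereas you work with abstract Riesz spectral projections; the two are interchangeable here, and your route avoids the explicit diagonal structure at the price of having to justify that the projections commute with the resolvent families and that the spectral facts you assert (discreteness, $\sigma(\A)=\bigcup_k\sigma(M_k)$, finiteness in right half-planes) actually hold --- these are exactly what the paper proves in Lemma \ref{lem:ADk}.

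One step in your instability argument is not correct as written. After projecting onto the eigenvalues with $\Re\lambda\ge\Re\lambda_*/2$, you claim that Corollary \ref{cof:negative spectal bound} applies on the complementary subspace because its spectral bound is strictly less than $\Re\lambda_*$. But that corollary requires a \emph{negative} spectral bound, and the complement may contain eigenvalues with positive real part: since $\alpha\pi/2<\pi/2$, eigenvalues with $\alpha\pi/2\le|\arg\lambda|<\pi/2$ lie outside the unstable cone yet have $\Re\lambda>0$, and nothing prevents some of them from having real part in $(0,\Re\lambda_*/2)$. The estimate you need is still true, but it should come from Lemma \ref{lem:SPE} combined with the growth of $E_\alpha$ (Lemma \ref{lem:ML}, Part 2), which gives a bound of the form $Ce^{(\omega')^{1/\alpha}t}$ for any $\omega'$ above the complement's spectral bound and is absorbed by the right-hand sides of \eqref{Sa:instab}--\eqref{Pa:instab} because that bound is below $\Re\lambda_*$; alternatively, lower the cutoff and project onto all eigenvalues with $\Re\lambda\ge-c$ for a small $c>0$ (still a finite set), so that the complement genuinely has negative spectral bound and Corollary \ref{cof:negative spectal bound} applies, while Theorem \ref{lem:Jordan}, Part 2, still covers the finite-dimensional block containing $\lambda_*$. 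With this repair the proposal is sound and runs parallel to the paper's proof.
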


In the proof of Theorem \ref{thm:stab:FRD}, we use certain well-known properties of the eigenvalues of the operator $(\A, D(\A))$.
In the following lemma, the sequence $\{w^k\}_{k=0}^{\infty}$ denotes the orthonormal basis  of $L^2(\Omega)$ consisting of the eigenfunctions of the Laplace operator $-\Delta$, subject to the Dirichlet boundary condition or the Neumann boundary condition and we denote by $\{\mu^k\}_{k=0}^{\infty} \subset [0, \infty)$ the corresponding eigenvalues.

\begin{lemma} \label{lem:ADk}
Denote by $\sigma(\A)$ the spectrum of $(\A,D(\A))$ defined by  \eqref{eq:A:FRD}-\eqref{L:domain}.
\begin{enumerate}
    \item The spectrum $\sigma(\A)$ is discrete, and there exist constants $\omega \in \mathbb{R}$ and $\theta \in (\frac{\pi}{2}, \pi)$ such that
$$ \sigma(\A) \subset \{\lambda \in \mathbb{C} \,:\, |\arg(\lambda - \omega)| \ge \theta \}. $$

\item Define the matrices (with the eigenvalues $\mu^k$ of the operator $-\Delta)$
$$ A_{D,k} \equiv \begin{pmatrix} -D_1 \mu^k & \cdots & 0 \\ \vdots & & \vdots \\ 0 & \cdots & -D_n \mu^k \end{pmatrix} + A
\quad\text{for} \quad k\in\N\cup\{0\}.
$$
Then
\begin{equation}
    \sigma(\A) = \bigcup_{k=0}^\infty \sigma(A_{D,k}).
\end{equation}

\item There exist numbers $k_0 \in \mathbb{N}$ and $\omega_0 > 0$ such that all eigenvalues of the matrices $A_{D,k}$ with $k > k_0$ belong to the set
$\{\lambda \in \mathbb{C} \,:\, \Re \lambda \le -\omega_0 \}. $
\end{enumerate}
\end{lemma}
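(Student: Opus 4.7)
The plan is to exploit the structure of $\A$ as a bounded matrix perturbation of the decoupled diffusion operator $D\Delta \equiv \mathrm{diag}(D_1\Delta,\ldots,D_n\Delta)$ acting componentwise on $\X$. The three parts are essentially consequences of three standard facts: the Dirichlet/Neumann Laplacian on $\Omega$ is sectorial with compact resolvent on $C_0(\overline\Omega)$ or $C(\overline\Omega)$; its eigenfunctions $\{w^k\}$ are smooth by elliptic regularity and hence lie in $D(\A)$; and the Weyl law gives $\mu^k\to\infty$ as $k\to\infty$.

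For Part (1), I would start from the fact (see, e.g., \cite[Corollary 3.1.24]{L95}) that the scalar Laplacian with either boundary condition on a bounded $C^2$-domain is sectorial on $\X$ with compact resolvent. Taking the direct sum with the positive diffusion coefficients preserves this, and adding the bounded operator $A$ (multiplication by a constant matrix) is a bounded perturbation, which preserves both sectoriality---up to shifting the vertex $\omega$ and possibly widening the opening---and compactness of the resolvent. The compact resolvent yields discreteness of $\sigma(\A)$, and sectoriality provides the required $\omega\in\R$ and $\theta\in(\pi/2,\pi)$.

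For Part (2), the eigenfunction expansion of $-\Delta$ is used to decouple $\A$ block by block. For the inclusion $\bigcup_k\sigma(A_{D,k})\subseteq\sigma(\A)$, given $A_{D,k} c=\lambda c$ with $c\in\C^n\setminus\{0\}$, the function $u(x)=c\,w^k(x)$ lies in $D(\A)$ (it is smooth and inherits the boundary condition from $w^k$), and a direct calculation using $-\Delta w^k=\mu^k w^k$ yields $\A u=\lambda u$. For the reverse inclusion, Part (1) ensures that $\sigma(\A)$ consists only of eigenvalues; given an eigenpair $(\lambda,u)$, expand $u\in\X\subset L^2(\Omega)^n$ component-wise as $u=\sum_k c^k w^k$ with $c^k\in\C^n$, and note that $\A u=\lambda u$ is equivalent (coefficient by coefficient in $L^2$) to $A_{D,k}c^k=\lambda c^k$ for every $k$; since $u\neq 0$, at least one $c^k$ is nonzero, so $\lambda\in\sigma(A_{D,k})$ for that $k$.

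Part (3) is a direct application of Gershgorin's theorem to the matrix $A_{D,k}=-\mu^k\,\mathrm{diag}(D_1,\ldots,D_n)+A$. Every eigenvalue $\lambda$ of $A_{D,k}$ satisfies, for some index $i$, the bound $\Re\lambda \le -D_i\mu^k + A_{ii} + \sum_{j\ne i}|A_{ij}| \le -D_{\min}\mu^k + C_A$, where $D_{\min}=\min_i D_i>0$ and $C_A$ depends only on the entries of $A$. Since $\mu^k\to\infty$, for any prescribed $\omega_0>0$ one may choose $k_0$ such that $D_{\min}\mu^k - C_A\ge\omega_0$ for all $k>k_0$. The main technical obstacle I anticipate is the rigorous justification of the block-by-block diagonalization in Part (2), since the orthonormal basis $\{w^k\}$ is an $L^2$-object while $\A$ acts on $\X$; the cleanest resolution uses that by Part (1) every spectral value is an honest eigenvalue and that each eigenfunction, being smooth by elliptic regularity, admits a rigorous Fourier identification of its $\X$- and $L^2$-representations.
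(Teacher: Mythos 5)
Your proposal is correct, and for Parts (1) and (2) it follows essentially the same route as the paper: Part (1) is delegated to the standard sectoriality/compact-resolvent theory for elliptic operators on $C(\overline\Omega)$ or $C_0(\overline\Omega)$ (the paper simply cites Lunardi), and Part (2) is proved by the same two-sided argument --- testing an eigenfunction of $\A$ against $w^k$ in $L^2$ (integration by parts, using that both functions satisfy the boundary condition) to produce an eigenvector of $A_{D,k}$, and conversely checking directly that $\bar v\,w^k$ is an eigenfunction of $\A$; your remark that discreteness of $\sigma(\A)$ is what guarantees every spectral value is an honest eigenvalue is exactly the implicit step the paper relies on. The only genuine divergence is Part (3): the paper estimates $|\lambda|=\|A_{D,k}v\|\ge \mu^k\min_i D_i-\|A\|$ for a unit eigenvector $v$ and then invokes the sector of Part (1) to convert the largeness of $|\lambda|$ into $\Re\lambda\le-\omega_0$, whereas you apply Gershgorin's theorem to $A_{D,k}=-\mu^k\,\mathrm{diag}(D_1,\dots,D_n)+A$ to get $\Re\lambda\le -\mu^k\min_i D_i + C_A$ directly. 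Your version is self-contained (it does not need the sectorial localization of Part (1)) and is, if anything, a slightly cleaner way to obtain the uniform drift of the eigenvalues to the left half-plane; both arguments hinge only on $\mu^k\to\infty$.
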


\begin{proof}
{\it Part 1.} These are well-known properties of elliptic operators and sectorial operators; see, for example, \cite[Corollary 3.1.21]{L95}.

{\it Part 2.} 
Let $\lambda\in \C$ be an eigenvalue of $\A$ with the eigenfunction
    \begin{equation}
        \bar z=\begin{pmatrix}
            z_1\\ \vdots \\ z_n
        \end{pmatrix} \in L^2(\Omega)^n,
    \end{equation}
and define $\bar v\in\R^n$ as
    \begin{equation}
        \bar v=
        \begin{pmatrix}
            v_1\\\vdots\\v_n
        \end{pmatrix}
        \equiv\begin{pmatrix}
            \int_\Omega z_1(x)w^k(x)\,dx\\ \vdots \\ \int_\Omega z_n(x)w^k(x)\,dx
        \end{pmatrix}
        ,
    \end{equation}
where $w^k$ is an eigenfunction of $-\Delta$ chosen in such a way that $\bar v\neq 0$.
The $i$-th equation in the system $(\A-\lambda I)\bar z=0$, for $\A$ given by \eqref{eq:A:FRD}, has the form 
    \begin{equation}
        D_i \Delta z_i +\sum_{j=1}^n a_{ij} z_j -\lambda z_i=0,
    \end{equation}
supplemented with the boundary condition.
Computing the $L^2(\Omega)$-scalar product of this equation with the eigenfunction $w^k$ and integrating by parts we obtain
    \begin{equation}
        -D_i\mu^k v_i+\sum_{j=1}^n a_{ij} v_j -\lambda v_i=0.
    \end{equation}
Hence, $\bar v$ is an eigenvector of the matrix $A_{D,k}$ with the eigenvalue $\lambda$.

On the other hand, let $\lambda \in \mathbb{C}$ be an eigenvalue of the matrix $A_{D,k}$ for some $k\in \N\cup\{0\}$ and denote by $\bar v\in \R^n$ the corresponding eigenvector. 
We are going to prove that $\lambda$ is an eigenvalue of $\A$ corresponding to the eigenvector $\bar v w^k$, where $w^k$ is the eigenfunction of $-\Delta$ with the eigenvalue $\mu^k$. Indeed, by a direct calculation, we obtain 
$$ (\A - \lambda I) (\bar v w^k)
=  w^k \left[\begin{pmatrix} -D_1 \mu^k & \cdots & 0 \\ \vdots & & \vdots \\ 0 & \cdots & -D_n \mu^k \end{pmatrix} 
 + A-\lambda I \right]\bar v =0. $$

{\it Part 3.}
Let $\lambda \in \mathbb{C}$ be an eigenvalue of $A_{D,k}$ for some $k \in \mathbb{N}$ and let $v \in \mathbb{R}^n$ be the corresponding eigenvector such that $\|v\| = 1$.
Then, using the explicit form of $A_{D,k}$, we obtain
$$ |\lambda| = \|\lambda v\| = \|A_{Dk} v\| \ge \mu^k \cdot \min\{D_1, \ldots, D_n\}-\|A\|. $$
The proof is complete by the fact that $\mu^k\to\infty $ as $k\to\infty$, because all the eigenvalues of $A_{D,k}$ form a discrete set with no accumulation points and because they stay in the sector described in Part 1. 
\end{proof}

\begin{proof}[Proof of Theorem \ref{thm:stab:FRD}.]
To prove that Assumptions \ref{ass:stab:resolvent} and \ref{ass:instab} are met,  
we will use the subordination formulas \eqref{Salpha} and \eqref{Palpha} for the resolvents 
 $S_\alpha(t)$ and $P_\alpha(t)$. Thus, we analyze the semigroup $T(t)$ of linear operators  generated by 
 the operator $\big(\A, D(\A)\big)$ on the Banach space $\X^n$.

Recall that the function
$u(t)=T(t)u_0$ with arbitrary $u_0\in \X^n$ satisfies the initial-boundary value problem 
    \begin{equation} \label{eq:Salpha}
         \begin{pmatrix}
    	\partial_t u_1\\
            \vdots\\
            \partial_t u_n
    	\end{pmatrix}=
         \begin{pmatrix}
    	D_1\Delta u_1\\
            \vdots\\
            D_n \Delta u_n
    	\end{pmatrix} +
        A
        \begin{pmatrix}
    	u_1\\
            \vdots\\
            u_n
        \end{pmatrix}, 
        \qquad 
        u(0)=u_0=
        \begin{pmatrix}
    	u_{0,1}\\
            \vdots\\
            u_{0,n}
        \end{pmatrix}
        ,
    \end{equation}
supplemented with the suitable boundary conditions. This problem has the explicit solution
    \begin{equation} \label{u:formula}
        u(t) =\sum_{k=0}^\infty \beta^k(t) w^k,
    \end{equation}
with the eigenfunctions $w^k$ of $-\Delta$ subject to the boundary condition and 
where the time dependent vector coefficients $\beta^k(t)$ satisfying the following systems of  differential equations 
    \begin{equation}\label{beta:system}
        \begin{pmatrix}
    	\partial_t \beta_1^k\\
            \vdots\\
            \partial_t \beta_n^k
    	\end{pmatrix}=
         \left[
         \begin{pmatrix} -D_1 \mu^k & \cdots & 0 \\ \vdots & & \vdots \\ 0 & \cdots & -D_n \mu^k \end{pmatrix}
          +
        A\right]
        \begin{pmatrix}
    	\beta_1^k\\
            \vdots\\
            \beta_n^k
    	\end{pmatrix}, \qquad k\in \{0,1,2,\dots\},
    \end{equation}
and the initial conditions
    \begin{equation}
    \beta^k(0)=
         \begin{pmatrix}
    	\beta_1^k(0)\\
            \vdots\\
            \beta_n^k(0)
    	\end{pmatrix}= 
         \begin{pmatrix}
    	\int_\Omega u_{0,1}w^k\, dx\\
            \vdots\\
            \int_\Omega u_{0,n}w^k\, dx
    	\end{pmatrix}.
    \end{equation}
Thus, 
$
    \beta^k(t) =e^{tA_{D,k}}\beta^k(0)
$ 
with the matrix $A_{D,k}$ from Lemma \ref{lem:ADk}. 

Now, for the numbers $k_0 \in \mathbb{N}$ and $\omega_0 > 0$ from Lemma \ref{lem:ADk}, Part 3, we decompose the sum in \eqref{u:formula}
\begin{equation}
    u(t)=T(t)u_0 =\sum_{k=0}^{k_0} e^{tA_{D,k}}\beta^k(0)w^k + \sum_{k=k_0+1}^\infty  e^{tA_{D,k}}\beta^k(0)w^k \equiv u_1(t)+u_2(t).
\end{equation}
Thus, using the subordination formula \eqref{Salpha} we obtain
\begin{align}\label{Salpha:sum}
    S_\alpha(t)u_0 &=\int_0^\infty \Psi_\alpha(s)T(st^\alpha)\;ds \\
    & = \sum_{k=0}^{k_0} \int_0^\infty \Psi_\alpha(s)e^{st^\alpha A_{D,k}}\beta^k(0)w^k ds +  \int_0^\infty \Psi_\alpha(s)u_2(st^\alpha)\;ds.
\end{align}

In both sums, the last term $u_2(t)$ corresponds to the operator $(\A,D(\A))$ restricted to the Banach space spanned by the eigenvectors 
$\{w^{k_0+1}, \dots, \}$, where, by Lemma \ref{lem:ADk}, Part 3, its spectral bound is bounded from above by $-\omega_0$.
Thus, for every $\varepsilon>0$,
\begin{equation}
    \|u_2(t)\|_\infty\le C(\omega_0,\varepsilon) e^{(-\omega_0+\varepsilon)t}\|u_0\|_\infty.
\end{equation}
Consequently, by Corollary \ref{cof:negative spectal bound},
\begin{equation}
   \left\| \int_0^\infty \Psi_\alpha(s)u_2(st^\alpha)\;ds\right\|_\infty\le C\|u_0\|_\infty \min\{t^{-\alpha},1\}.
\end{equation}

On the other hand, the first term on the right-hand side of equation \eqref{Salpha:sum}, containing the finite sum, is estimated directly by applying Theorem \ref{lem:Jordan}. In particular, to show the instability of the zero solution, we choose the eigenvalue $\lambda=\lambda_{max}$ 
  required in Assumption \ref{ass:instab} to be the one satisfying
$$
\Re\lambda_{max}=\max \left\{\Re \lambda\,:\, \lambda\in \sigma(\A) \quad \text{and} \quad |\arg(\lambda)|<\frac{\alpha \pi}{2}\right\},
$$
which exists by the assumption of Theorem \ref{thm:stab:FRD}, Part 2. 

Finally, applying Theorem \ref{thm:stab}, we complete the proof of Theorem \ref{thm:stab:FRD}, Part 1, and applying Theorem \ref{thm:instab}, we complete the proof of Theorem \ref{thm:stab:FRD}, Part 2.
\end{proof}



   \subsection{Turing instability}
We use Theorem \ref{thm:stab:FRD} to extend the concept of diffusion-driven instability to the fractional setting. This phenomenon, first described by Turing \cite{T52}, is the well-known mechanism explaining the emergence of stable patterns (see, for example, \cite[Chapter II]{Murray-2}).
In the following theorem, we show the Turing instability in the case of a system of two fractional reaction-diffusion equations
        \begin{align}\label{eq:Turing}
            &\partial^\alpha_t u = \su \Delta u + f(u,v), & x\in\Omega, \quad t>0,\\
            &\partial^\alpha_t v = \sv \Delta v + g(u,v), &  x\in\Omega, \quad t>0,
        \end{align}
    on a bounded domain $\Omega\subset\mathbb{R}^n$ 
    and supplemented either with the Dirichlet boundary condition
    \begin{equation}\label{Dirichlet}
    u=v=0, \qquad x\in \partial\Omega, \quad t>0,
    \end{equation}
    or the Neumann boundary condition
    \begin{equation}\label{Neumann}
    \nu\cdot \nabla u=  \nu\cdot \nabla  v=0, \qquad x\in \partial\Omega, \quad t>0.
    \end{equation}
Rather than studying Turing instability in its most general form, our goal in this work is just to 
 illustrate some applications of the general instability Theorem \ref{thm:instab}, formulated in this particular setting in Theorem \ref{thm:stab:FRD}.

    \begin{theorem}[Turing instability]\label{thm:Turing}
        Let $\alpha\in(0,1)$ and $f=f(u,v)$ and $g=g(u,v)$ be arbitrary $C^2$-nonlinearities.
        Assume that $(\bar{u},\bar{v})\in\mathbb{R}^2$ is an asymptotically stable, constant, stationary solution of the following fractional differential system
            \begin{align}
                \partial^\alpha_t u =  f(u,v),\\
                \partial^\alpha_t v =  g(u,v),
            \end{align} 
            in the sense that the matrix
            \begin{equation}\label{matrix:A:fg}
                A= \left(
                    \begin{array}{cc}
                         f_u(\bar{u},\bar{v})& f_v(\bar{u},\bar{v}) \\
                         g_u(\bar{u},\bar{v})& g_v(\bar{u},\bar{v})
                    \end{array}
                \right)
            \end{equation}
            have both  eigenvalues $\lambda_\pm\in\C$ satisfying 
                $
                    |\arg(\lambda)|>{\alpha \pi}/{2}
                $ {\rm(}see Theorem \ref{thm:stab:fde}, Part 1{\rm )}.
        If $f_u(\bar{u},\bar{v})>0$ and $\su>0$ are sufficiently small, then $(\bar{u},\bar{v})\in\mathbb{R}^2$ is an unstable solution of the initial-boundary value problem for the fractional reaction-diffusion system \eqref{eq:Turing}.
    \end{theorem}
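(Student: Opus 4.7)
The plan is to reduce everything to checking a spectral condition on the finite-dimensional matrices $A_{D,k}$ and then invoke Theorem \ref{thm:stab:FRD}, Part 2. The linearization of \eqref{eq:Turing} at $(\bar u,\bar v)$ gives the operator $\A$ from \eqref{eq:A:FRD} built out of the Jacobian $A$ in \eqref{matrix:A:fg} and the diagonal diffusion $\mathrm{diag}(\su\Delta,\sv\Delta)$. By Lemma \ref{lem:ADk}, Part 2, $\sigma(\A)=\bigcup_{k\geq 0}\sigma(A_{D,k})$, so it will suffice to exhibit some $k$ for which $A_{D,k}$ has an eigenvalue $\lambda\neq 0$ with $|\arg\lambda|<\alpha\pi/2$. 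The cleanest way to do this is to produce a strictly positive real eigenvalue, which happens precisely when $\det(A_{D,k})<0$ (since then the real $2\times 2$ matrix $A_{D,k}$ has two real eigenvalues of opposite signs, whose positive one satisfies $\arg\lambda_+=0<\alpha\pi/2$ and is nonzero).

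Explicitly,
$$ \det(A_{D,k}) = \su\sv(\mu^k)^2 - (f_u\sv+g_v\su)\mu^k + \det(A).$$
First I would observe that the fractional stability hypothesis on $A$ forces $\det(A)>0$: if the eigenvalues $\lambda_\pm$ of $A$ are real, then $|\arg\lambda_\pm|>\alpha\pi/2$ (with $\alpha\in(0,1)$) forces both to be strictly negative, hence $\det(A)=\lambda_+\lambda_->0$; if they form a complex conjugate pair, $\det(A)=|\lambda_+|^2>0$ automatically. So the constant term of the quadratic above is strictly positive.

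Next I would fix any $k\geq 1$ with $\mu^k>\det(A)/(f_u\sv)$; such a $k$ exists because $f_u>0$ and the Laplace eigenvalues $\mu^k$ are unbounded above for both boundary conditions considered. At this fixed $k$, evaluating at $\su=0$ gives
$$\det(A_{D,k})\bigr|_{\su=0}=-f_u\sv\mu^k+\det(A)<0,$$
and since $\det(A_{D,k})$ is a polynomial (hence continuous) function of $\su$, there is $\delta=\delta(k)>0$ such that $\det(A_{D,k})<0$ for every $\su\in(0,\delta)$. Any such $\su$ produces a matrix $A_{D,k}$ with a positive real eigenvalue $\lambda_+$, hence an element of $\sigma(\A)$ satisfying $\lambda_+\neq 0$ and $|\arg\lambda_+|<\alpha\pi/2$. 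Applying Theorem \ref{thm:stab:FRD}, Part 2, together with the assumed $C^2$-regularity of $(f,g)$, then gives the asserted instability of $(\bar u,\bar v)$.

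The main subtle point is verifying $\det(A)>0$ from the fractional stability of $A$, because the condition $|\arg\lambda|>\alpha\pi/2$ with $\alpha<1$ is strictly weaker than classical Hurwitz stability and even permits $\mathrm{tr}(A)>0$; the saving feature is that for a $2\times 2$ real matrix the product of eigenvalues (two negative reals, or a complex conjugate pair) is automatically positive whenever no eigenvalue sits on the positive real axis. Once this is in hand, the remainder is elementary: fix $k$ independently of $\su$ using $f_u>0$, and then use continuity in $\su$ at $\su=0$ to convert the limiting inequality $-f_u\sv\mu^k+\det(A)<0$ into an open condition on $\su$. This is exactly where the hypothesis that $\su$ be ``sufficiently small'' enters.
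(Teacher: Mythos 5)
Your proof is correct, and while it shares the paper's overall strategy---reduce to the mode matrices $A_{D,k}$ via Lemma \ref{lem:ADk}, force $\det A_{D,k}<0$ for small $\su$, obtain a nonzero real positive eigenvalue of $\A$, and invoke Theorem \ref{thm:stab:FRD}, Part 2---you execute the linear step by a genuinely different, more elementary route. The paper goes through a separate linear result (Theorem \ref{thm:linear:Turing}): using the characterization of Lemma \ref{lem:A2}, it treats $\D_k$ as a quadratic in the continuous spectral variable $\lambda_k$, shows that for small $\su$ its minimum is negative and attained at a positive location, and deduces a band $[\Lambda_-,\Lambda_+]$ of unstable wave numbers; moreover, as written, that argument is carried out under $\T<0$, and the case $\T\ge 0$ (which fractional stability permits) is explicitly deferred to a forthcoming paper in the remark following the proof. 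You instead fix one Laplacian eigenvalue $\mu^k>\det(A)/(f_u\sv)$ independently of $\su$ and use continuity of $\det A_{D,k}$ in $\su$ at $\su=0$; since a negative determinant of a real $2\times 2$ matrix forces two real eigenvalues of opposite sign irrespective of the trace, your argument covers both trace cases uniformly and also makes explicit that an actual eigenvalue of $-\Delta$ lands in the unstable range, a point the paper leaves implicit (though for small $\su$ the band $[\Lambda_-,\Lambda_+]$ stretches to infinity, so some $\mu^k$ does fall in it). Your verification that $\det(A)>0$ follows from the fractional stability hypothesis is exactly the content $\D>0$ of Lemma \ref{lem:A2}, so nothing is missing there. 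What the paper's route buys is a quantitative description of the whole unstable band of wave numbers; what yours buys is a shorter argument with no case analysis and a cleaner smallness condition on $\su$.
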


\begin{remark} 
Theorems \ref{thm:stab:FRD} and \ref{thm:Turing} can be used to extend several results on Turing pattern formation obtained through linear stability analysis and numerical simulations in a reaction-diffusion system of several species where fractional  temporal derivatives operate on all species; see, for example, \cite{NN08,HLW05,GDMB09,GD10,DG18} and the references therein. By Theorems \ref{thm:stab:FRD} and \ref{thm:Turing}, constant stationary solutions of fractional reaction-diffusion systems considered in the cited papers are not only linearly unstable (as demonstrated in the cited papers by an analysis of the eigenvalues of the linearization operator) but are also unstable in the Lyapunov sense as solutions to the considered nonlinear systems.
\end{remark}

The proof of Theorem \ref{thm:Turing} is presented at the end of this subsection and is a direct application of Theorem \ref{thm:stab:FRD} combined with the following result.

\begin{theorem}[Linear Turing instability]\label{thm:linear:Turing}
    Let $\alpha\in (0,1)$. 
    Consider the system 
    \begin{equation} \label{FRD:lin}
    \begin{split}
        \partial_t^\alpha u&= D_1 \Delta u +au+bv, \\
         \partial_t^\alpha v&=  D_2\Delta v +cu+dv,
         \end{split}
         \qquad x\in \Omega, \quad t>0,
    \end{equation}
    $a,b,c,d\in \R$, on a bounded domain $\Omega\subset\R^n$ and supplemented either with the Dirichlet boundary condition \eqref{Dirichlet}
    or the Neumann boundary condition \eqref{Neumann}.
    We assume 
    that the matrix 
        \begin{equation}\label{matrix:A}
            A= 
            \left(
                \begin{array}{cc}
                     a& b \\
                     c& d
                \end{array}
            \right)
        \end{equation}
     has two eigenvalues $\lambda_\pm\in \C$
     such that 
        \begin{equation} \label{lambda:pm}
            |\arg (\lambda_\pm)|>  \frac{\alpha\pi}{2}.
        \end{equation}
    Assume also that $a>0$.
    Then, for sufficiently small $D_1>0$, the zero stationary solution of the initial-boundary value problem for system \eqref{FRD:lin} is unstable in the sense 
    that the operator defined by the right-hand side of system \eqref{FRD:lin} has an eigenvalue satisfying $|\arg (\lambda)|<\alpha\pi/2$.
\end{theorem}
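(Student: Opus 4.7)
The plan is to apply Lemma \ref{lem:ADk}, which identifies the spectrum $\sigma(\A)$ with $\bigcup_{k=0}^\infty \sigma(A_{D,k})$, where the $2\times 2$ matrices
\[
A_{D,k} = \begin{pmatrix} a - D_1 \mu^k & b \\ c & d - D_2 \mu^k \end{pmatrix}
\]
are indexed by the Laplacian eigenvalues $\mu^k\to+\infty$. Since each $A_{D,k}$ is a real $2\times 2$ matrix, if I can arrange $\det A_{D,k} < 0$ for a single $k$, its characteristic polynomial will have two real roots of opposite sign; the positive root is then an eigenvalue of $\A$ with $\arg = 0 < \alpha\pi/2$, which gives the desired instability.

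I would begin by verifying the auxiliary identity $\det A > 0$. From $|\arg \lambda_\pm| > \alpha\pi/2$, the pair $\lambda_\pm$ is either a genuine complex-conjugate pair (so $\lambda_+\lambda_- = |\lambda_+|^2 > 0$) or consists of two nonzero reals that must both be negative (since no positive real lies in the stable wedge); either way $\det A = \lambda_+ \lambda_- > 0$. Setting $D_1 = 0$ then gives
\[
\det A_{0,k} = a(d - D_2 \mu^k) - bc = \det A - a D_2 \mu^k,
\]
which becomes strictly negative once $\mu^k$ is large, because $a>0$, $D_2>0$, and $\mu^k\to\infty$. I would fix such an index $k_0$.

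The map $D_1 \mapsto \det A_{D_1,k_0}$ is polynomial, hence continuous at $D_1=0$, so the negativity persists for all $D_1>0$ sufficiently small; for such $D_1$ the matrix $A_{D_1,k_0}$ has a positive real eigenvalue, which by Lemma \ref{lem:ADk} belongs to $\sigma(\A)$. The only step that warrants a second look, rather than being a genuine obstacle, is the positivity of $\det A$: the stability hypothesis $|\arg \lambda_\pm| > \alpha\pi/2$ is strictly weaker than ``negative real parts'' when $\alpha<1$ (the eigenvalues may even sit in the open right half-plane for $\alpha<1/2$), yet the simple case check above still forces $\lambda_+\lambda_- > 0$. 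Beyond this observation, Lemma \ref{lem:ADk} absorbs all of the infinite-dimensional work, leaving the remainder a one-parameter continuity argument on an explicit $2\times 2$ matrix.
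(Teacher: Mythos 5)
Your argument is correct, and it shares the paper's backbone: the reduction, via the Laplacian eigenbasis, of the spectrum of the linearized operator to the union of the spectra of the matrices $A_{D,k}$ (Lemma \ref{lem:ADk}; the paper re-derives this reduction inside its proof through the Mittag-Leffler ansatz for the Fourier coefficients), together with the mechanism of exhibiting one mode whose $2\times 2$ matrix has negative determinant and hence a real positive eigenvalue, which lies in the unstable sector since its argument is $0<\alpha\pi/2$. The finishing step, however, is genuinely different. The paper fixes $D_1$ small and studies $\D_k$ as a parabola in the continuous frequency variable, showing its vertex is at a positive abscissa and its minimum is negative; this yields a band $[\Lambda_-,\Lambda_+]$ of frequencies with $\D_k<0$ and tacitly requires an actual Laplacian eigenvalue to land in that band (unproblematic because the band widens as $D_1\to 0$, with details referred to \cite{P15}). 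Moreover, the paper's written argument proceeds under $\T<0$ (it invokes $\T_k<0$) and defers the remaining admissible case \eqref{t:2:3} with $\T\ge 0$ to a remark and a forthcoming paper. You instead fix a single large mode $k_0$ with $\det A_{0,k_0}=\det A-aD_2\mu^{k_0}<0$ at $D_1=0$ and conclude by continuity in $D_1$; this is more elementary, avoids any discussion of where the Laplacian eigenvalues sit relative to a $D_1$-dependent band, and—because a negative determinant forces a positive real root irrespective of the sign of the trace—covers the full range of the theorem's hypotheses without case analysis, including the subcase the paper postpones. Two small remarks: your preliminary verification that $\det A>0$ is never actually used (negativity of $\det A-aD_2\mu^{k_0}$ for large $\mu^{k_0}$ needs only $a>0$ and $D_2>0$), and the hypothesis $|\arg\lambda_\pm|>\alpha\pi/2$ permits eigenvalues in the open right half-plane for every $\alpha\in(0,1)$, not only for $\alpha<1/2$.
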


However, we first prove a simple result concerning the roots of quadratic equations. 

\begin{lemma}\label{lem:A2}
For the matrix $A$ defined in \eqref{matrix:A},
we denote its trace by  
$\T = a+c$ and its determinant by $\D = ad-bc$.
Let $\alpha\in (0,1)$. The matrix $A$  has both eigenvalues $\lambda_\pm\in \C$
satisfying
    \begin{equation} \label{lambda:pm:2}
        |\arg (\lambda_\pm)|>  \frac{\alpha\pi}{2}
    \end{equation}
  if and only if $\D>0$, and  either 
    \begin{equation}\label{t:1:1}
         \T<0,
    \end{equation}
or
    \begin{equation}\label{t:2:2}
        \T\ge 0, \quad\text{and} \quad  \frac{\T}{2\sqrt{\D}} < \cos\left(\frac{\alpha \pi}{2}\right).
    \end{equation}
\end{lemma}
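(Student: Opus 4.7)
The plan is to analyze the roots of the characteristic polynomial of $A$ directly via the quadratic formula and then translate the angular condition $|\arg(\lambda_\pm)|>\alpha\pi/2$ into inequalities on $\T$ and $\D$. Since $\alpha\in(0,1)$ gives $\alpha\pi/2\in(0,\pi/2)$, no eigenvalue lying in the closed right half-plane with nonnegative imaginary part too close to $\R_+$ can satisfy the desired bound; the argument then separates cleanly into the real and the complex case.

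First, I would note that the eigenvalues of $A$ are
\begin{equation}
\lambda_{\pm}=\frac{\T \pm \sqrt{\T^{2}-4\D}}{2}.
\end{equation}
I would split the analysis based on the sign of the discriminant $\T^{2}-4\D$. In the real case $\T^{2}-4\D\ge 0$ both eigenvalues are real, and since $|\arg(\lambda)|\in\{0,\pi\}$ for a nonzero real number, the condition $|\arg(\lambda_{\pm})|>\alpha\pi/2$ forces both roots to be strictly negative; by Vieta this is equivalent to $\T<0$ and $\D>0$, which is covered by case \eqref{t:1:1}. (Here I would also observe that the case $\D=0$ is excluded since then $0$ is an eigenvalue, whose argument is undefined and cannot satisfy a strict lower bound.)

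Next, in the complex case $\T^{2}-4\D<0$, which already forces $\D>0$, I would write $\lambda_{\pm}=\T/2\pm i\sqrt{4\D-\T^{2}}/2$, so that $|\lambda_{\pm}|^{2}=\D$ and $\Re\lambda_{\pm}=\T/2$. If $\T<0$, then $|\arg(\lambda_{\pm})|>\pi/2>\alpha\pi/2$ automatically, which is again covered by \eqref{t:1:1}. If $\T\ge 0$, then $|\arg(\lambda_{\pm})|\in[0,\pi/2]$ and one has
\begin{equation}
\cos|\arg(\lambda_{\pm})|=\frac{\Re\lambda_{\pm}}{|\lambda_{\pm}|}=\frac{\T}{2\sqrt{\D}}.
\end{equation}
Since $\cos$ is strictly decreasing on $[0,\pi/2]$, the condition $|\arg(\lambda_{\pm})|>\alpha\pi/2$ is equivalent to $\T/(2\sqrt{\D})<\cos(\alpha\pi/2)$, which is exactly condition \eqref{t:2:2}. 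Conversely, I would check that whenever \eqref{t:2:2} holds the discriminant $\T^{2}-4\D$ is automatically negative, because $\T/(2\sqrt{\D})<\cos(\alpha\pi/2)<1$ gives $\T^{2}<4\D$, so we are indeed in the complex case and the computation above applies.

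Finally, I would assemble the three subcases (real with $\T<0,\D>0$; complex with $\T<0,\D>0$; complex with $\T\ge 0,\D>0$ and the cosine inequality) into the stated dichotomy. I do not expect a real obstacle: the only point requiring care is the treatment of the borderline values $\T=0$ and of the degenerate possibility $\D\le 0$, for which either a zero eigenvalue or a positive real eigenvalue arises and the angular condition fails.
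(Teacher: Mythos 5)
Your proposal is correct and follows essentially the same route as the paper's proof: the quadratic formula for $\lambda_\pm$, exclusion of $\D\le 0$, the observation that $\T<0$ forces negative real parts, and the identity $\cos\big(|\arg(\lambda_\pm)|\big)=\T/(2\sqrt{\D})$ together with the monotonicity of cosine in the complex case with $\T\ge 0$. The only cosmetic difference is that you organize the cases by the sign of the discriminant rather than by the sign of $\D$ first, which changes nothing essential.
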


\begin{proof}
The characteristic polynomial of the matrix $A$ is given by   
$w(\lambda)=\lambda^2-\lambda\T+\D$
and its roots (the eigenvalues of the matrix $A$) are given by formula 
    \begin{equation}\label{eigenvalues:0}
        \lambda_\pm = \frac{\T\pm \sqrt{\T^2-4\D}}{2}.
    \end{equation}
If $\D< 0$, then $\lambda_+$ is a real and positive eigenvalue, which cannot satisfy inequality \eqref{lambda:pm:2}.
The case $\D=\lambda_-\lambda_+=0$ is also excluded by inequality \eqref{lambda:pm:2}.
Now, for $\D >0$, one of the three possibilities can occur.
    \begin{enumerate}
        \item If $\T < 0$, then both eigenvalues $\lambda_\pm$ have negative real parts and they both satisfy inequality \eqref{lambda:pm:2}.

        \item If $\T \ge 0$ and $4\D\le \T^2$, then both eigenvalues are real and  positive, so, they do not satisfy \eqref{lambda:pm:2}.

        \item If $\T \ge 0$ and $4\D> \T^2$, then $\lambda_\pm$ are pairs of conjugate complex numbers with positive real parts. In this case, we complete the proof by using the relation 
             \begin{equation}
                \cos\big(\left|\arg \left(\lambda_\pm\right)\right|\big)= \frac{\Re \lambda_\pm}{\|\lambda_\pm\|} =
                \frac{\T}{2\sqrt D},
            \end{equation}
         and the fact that cosine is a decreasing function on $[0, \pi/2]$.
    \end{enumerate}
\end{proof}

\begin{proof}[Proof of Theorem \ref{thm:linear:Turing}.]
    We consider
an
orthonormal basis of eigenfunctions $\{w^k\}_{k=1}^\infty $ 
associated with positive eigenvalues $\{\lambda_k\}_{k=1}^\infty$
    of the Laplace operator, either with the Dirichlet or the 
Neumann condition.
Thus, a solution of system \eqref{FRD:lin} has the form 
$$
u(t)=\sum_{k=1}^\infty p_k(t) w^k, \quad v(t)=\sum_{k=1}^\infty q_k(t) w^k,
$$
with suitable time-dependent coefficients $p_k(t), q_k(t)$ satisfying the systems of fractional differential equations 
    \begin{equation}
        \begin{split}\label{pk:qk:system}
            \partial_t^\alpha p_k&=-D_1 \lambda_kp_k+a  p_k+bq_k,\\
            \partial_t^\alpha q_k&=- D_2 \lambda_k q_k+cp_k+dq_k.
        \end{split}
    \end{equation}
We look for a solution of system \eqref{pk:qk:system} in the form 
    $$
        p_k(t) =\bar p_k E_\alpha(\lambda t^\alpha), 
        \qquad 
        q_k(t) =\bar q_k E_\alpha(\lambda t^\alpha),
    $$
with the Mittag-Leffler function \eqref{Mittag-Leffler} and unknown numbers $\lambda$, $\bar p_k$,  $q_k$ which, by equations \eqref{eq: FDE simple:0}-\eqref{eq: sol D^a u=ru:0},
satisfy the linear system
    \begin{equation}\label{pk:qk:linear:system}
        \begin{split}
            \lambda \bar p_k&=(a-\su \lambda_k) \bar p_k +b\bar q_k,\\
             \lambda \bar q_k&= c\bar p_k+(d-\sv\lambda_k)\bar q_k.
        \end{split}
    \end{equation}
In the following, we analyze eigenvalues $\lambda$ of system \eqref{pk:qk:linear:system} by applying Lemma \ref{lem:A2} to the matrix
\begin{equation}\label{matrix:Ak}
    A_k= \left(
        \begin{array}{cc}
             a-D_1 \lambda_k& b \\
             c& d-D_2\lambda_k
        \end{array}
    \right)
\end{equation}
 with the trace and determinant 
    \begin{equation}
        \begin{split}
            \T_k &= a-\su\lambda_k+c-\sv\lambda_k=\T -\lambda_k(\su+\sv),\\
            \D_k &= (a-\su\lambda_k)(d-\sv\lambda_k)-bc =\D -\lambda_k (\su d+\sv a) +\su\sv \lambda_k^2
            .
        \end{split}
    \end{equation}
Our goal is to  find conditions on the matrix $A_k$, such that it has an eigenvalue 
$\lambda \in \C \setminus \{0\}$
satisfying 
\begin{equation}\label{lambda:Turing}
|\arg(\lambda)|<\alpha\pi/2,
\end{equation}
given that 
the matrix $A$ in \eqref{matrix:A} has both eigenvalues $\lambda_\pm\in \C$
 satisfying 
 $$
 |\arg (\lambda_\pm)|>  \frac{\alpha\pi}{2}.
  $$

First, notice that if $\T <0$, then $\T_k <0$. Moreover, if both $a<0$ and $d<0$ 
then $\D>0$ implies $\D_k>0$. Thus, for $a<0$ and $d<0$, by Lemma \ref{lem:A2}, both eigenvalues of the matrix $A_k$ have  negative real parts and inequality \eqref{lambda:Turing} cannot be satisfied. Consequently, in what follows, we assume that $a>0$ or $d>0$ (or both).

Form now on, without loss of generality, we assume that $a>0$. 
 Since $\T_k<0$,  our goal is to achieve 
  \begin{equation}
        \D_k = \D -\lambda_k (\su d+\sv a) +\su\sv \lambda_k^2<0 \quad\text{for some} \quad k\in\N.
    \end{equation}
This inequality is satisfied by some $\lambda_k>0$ if the minimum of the corresponding parabola 
is attained at a positive value and 
is negative. This occurs if
    \begin{equation}
      \frac{D_1 d+D_2 a}{2D_1 D_2 }> 0 \qquad \text{and} \qquad   -\frac{(\su d+\sv a)^2 - 4D_1 D_2  \D}{4D_1 D_2 } <0.
    \end{equation}
Denoting $\theta = D_1 /D_2 $, these inequalities become
    \begin{equation}
      \frac{d}{2D_2 }+\frac{a}{2D_1 }>0 \qquad \text{and} \qquad   {\theta d^2+ \frac{1}{\theta} a^2 }  > 4\left(\D - \frac{a d}{2}\right).
    \end{equation}
Now, it is clear that these inequalities will hold if, for fixed $D_2 >0$ and for $a>0$, we choose $D_1 >0$ sufficiently small. Consequently, there exist numbers $0<\Lambda_-<\Lambda_+$ (calculated,  {\it e.g.},~in the monograph \cite[Theorem 7.1]{P15}) such that
for $\lambda_k\in [\Lambda_-,\Lambda_+]$, the matrix $A_k$ in \eqref{matrix:Ak} has a real and positive eigenvalue (so satisfying inequality  \eqref{lambda:Turing}).
\end{proof}

\begin{remark}In the proof of Theorem \ref{thm:linear:Turing}, we have skipped the analysis of the case when the coefficients of the matrix $A$ in \eqref{matrix:A} satisfy
    \begin{equation}\label{t:2:3}
        \D>0, \quad \T\ge 0, \quad \frac{\T}{2\sqrt{\D}} < \cos\left(\frac{\alpha \pi}{2}\right),
    \end{equation}
and where it is possible to show that for $a>0$ and for sufficiently small $D_1 >0$, we have
        \begin{equation}
            \frac{\T_k}{2\sqrt{\D_k}} > \cos\left(\frac{\alpha \pi}{2}\right),
        \end{equation}
        for some $k$.
        We postpone detailed calculations to our forthcoming paper. 
\end{remark}

\begin{proof}[Proof of Theorem \ref{thm:Turing}]
   Apply the usual linearization procedure, Theorem \ref{thm:linear:Turing} with the matrix $A$ defined in \eqref{matrix:A:fg}, and Theorem \ref{thm:stab:FRD}, Part 2.
\end{proof}


\subsection{Non-constant stationary solutions}

In the study of stability of non-constant stationary solutions of the fractional reaction-diffusion system \eqref{eq:FRD}, the linearization principle produces a linear system of fractional reaction-diffusion equations analogous to \eqref{eq:FRD:constant}-\eqref{eq:lin:constant}, but with the matrix $A$ dependent on $x$. 
Here, the methods employed in the proof of Theorem~\ref{thm:stab:FRD} are not directly applicable because the linearized system generally does not admit an explicit solution in terms of the eigenfunctions of $-\Delta$. However, we can still use Corollary \ref{cof:negative spectal bound} to establish stability estimates for the resolvent families required by Theorem \ref{thm:stab}, provided that the spectral bound of the linearized operator is strictly negative.

As a final application of our general results, we extend to the fractional setting the classical result concerning the instability of non-constant stationary solutions to the general fractional reaction-diffusion equation subject to the Neumann boundary condition
    \begin{equation}\label{eq: FRDS 1D}
        \begin{split}
            \partial^\alpha_t v&= \Delta v + g(v),  \quad x\in \Omega, \quad t>0,\\
            \nu\cdot  \nabla v &=0,     \qquad\qquad \quad  x \in \partial\Omega, \quad t>0,
        \end{split}
    \end{equation}
with $\alpha\in (0,1)$, with an arbitrary $C^2$-nonlinearity $g=g(v)$, considered on a bounded domain $\Omega\subset\R^n$.

In the classical case where $\alpha=1$, the following theorem was initially proved by Chafee \cite{C75} in the one-dimensional setting, and later generalized to higher dimensions by Casten and Holland \cite{CH78} and Matano \cite{M79}. Applying the general instability tools from Section \ref{sec:instability}, we immediately obtain the analogous result in the fractional case.

\begin{theorem} \label{thm:non:const}
    Let $\alpha\in (0,1)$. Assume that $\Omega\subset\mathbb{R}^n$ is bounded, convex
    and $g=g(v)$ has an arbitrary $C^2$-nonlinearity.
    Then, all non-constant stationary solutions of problem \eqref{eq: FRDS 1D} are unstable.
\end{theorem}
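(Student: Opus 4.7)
The plan is to apply Theorem \ref{thm:stab:FRD}, Part 2, by showing that the linearization of \eqref{eq: FRDS 1D} about $\bar v$ produces an operator with a strictly positive eigenvalue. Setting $u = v - \bar v$, the equation takes the form $\partial_t^\alpha u = \A u + f(u)$ with the Neumann boundary condition, where $\A u = \Delta u + g'(\bar v(x))\, u$ and $f$ is the Taylor remainder, which satisfies \eqref{f:flat:instab} with $\eta = 1$ since $g \in C^2$. Because $g'(\bar v(\cdot)) \in C(\overline\Omega)$ is a bounded potential, the operator $(\A, D(\A))$ with domain \eqref{L:domain} generates an analytic semigroup on $\X = C(\overline\Omega)$ and has compact resolvent, so $\sigma(\A)$ is discrete and consists entirely of eigenvalues. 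It therefore suffices to exhibit one eigenvalue $\mu > 0$: it satisfies $\mu \ne 0$ and $|\arg \mu| = 0 < \alpha\pi/2$, and Theorem \ref{thm:stab:FRD}, Part 2, immediately yields the nonlinear instability of $\bar v$.

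The existence of such $\mu$ is the classical Casten--Holland--Matano result, which I would establish as follows. Let $\mu_1$ denote the principal (largest real) eigenvalue of $\A$, characterized variationally by
$$\mu_1 = \sup_{\phi \in H^1(\Omega),\; \phi \not\equiv 0} \frac{-\int_\Omega |\nabla \phi|^2\, dx + \int_\Omega g'(\bar v)\, \phi^2\, dx}{\int_\Omega \phi^2\, dx}.$$
For each $i \in \{1, \ldots, n\}$, differentiating the stationary equation $\Delta \bar v + g(\bar v) = 0$ in $x_i$ gives $\Delta(\partial_{x_i}\bar v) + g'(\bar v)\, \partial_{x_i}\bar v = 0$ in $\Omega$, so testing the Rayleigh quotient with $\phi_i = \partial_{x_i}\bar v$ and integrating by parts reduces the numerator to $-\int_{\partial\Omega} \phi_i\, \partial_\nu \phi_i\, dS$. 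Summing over $i$ and differentiating the Neumann condition $\partial_\nu \bar v \equiv 0$ tangentially along $\partial\Omega$, one obtains the standard identity
$$\sum_{i=1}^n \int_{\partial\Omega} \phi_i\, \partial_\nu \phi_i\, dS = \frac{1}{2}\int_{\partial\Omega} \partial_\nu|\nabla \bar v|^2\, dS = -\int_{\partial\Omega} II(\nabla \bar v, \nabla \bar v)\, dS,$$
where $II$ denotes the second fundamental form of $\partial\Omega$. Convexity of $\Omega$ ensures $II \ge 0$, so the right-hand side is nonpositive, and therefore $\mu_1 \ge 0$.

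To upgrade to the strict inequality $\mu_1 > 0$ I would argue by contradiction: if $\mu_1 = 0$, then each $\phi_i$ would achieve the supremum and hence be an eigenfunction of $\A$ with eigenvalue $0$, and simultaneously the boundary integral $\int_{\partial\Omega} II(\nabla \bar v, \nabla \bar v)\, dS$ would vanish. Combined with unique continuation for the elliptic equation satisfied by the $\phi_i$ and the fact that $II$ is positive on a set of positive measure in $\partial\Omega$ whenever $\bar v$ is non-constant (else $\Omega$ would fail to be a bounded convex body), this forces $\bar v$ to be constant, contradicting the hypothesis. I expect the main obstacle to be the boundary analysis: rigorously justifying the identity relating $\partial_\nu|\nabla \bar v|^2$ to the second fundamental form requires $C^2$ regularity of $\partial\Omega$ and elliptic regularity of $\bar v$ up to the boundary, and the strict inequality $\mu_1 > 0$ rather than $\mu_1 \ge 0$ is essential because Theorem \ref{thm:stab:FRD}, Part 2, requires the unstable eigenvalue to be nonzero. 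Once $\mu_1 > 0$ is secured, the fractional linearization principle delivers the instability of $\bar v$ at once.
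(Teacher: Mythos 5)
You have the right overall skeleton (linearize, produce a strictly positive eigenvalue via the Casten--Holland--Matano variational argument, then invoke the fractional linearization principle), but the final step rests on the wrong tool. Theorem \ref{thm:stab:FRD}, Part 2, is stated and proved only for \emph{constant} stationary solutions of system \eqref{eq:FRD}: its linearized operator \eqref{eq:A:FRD} has a constant coefficient matrix $A$, and the proof hinges on expanding $T(t)u_0$ in the eigenfunctions $w^k$ of $-\Delta$, which turns the problem into the constant-coefficient systems $\partial_t\beta^k=A_{D,k}\beta^k$. With the variable potential $g'(\bar v(x))$ this decomposition is unavailable, and the paper says so explicitly at the start of this subsection. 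What the paper does instead, and what your argument actually needs, is to verify Assumption \ref{ass:instab} directly: the operator $\A u=\Delta u+g'(\bar v)u$ with Neumann boundary condition is symmetric, so its spectrum is real; hence a positive eigenvalue forces $s(\A)>0$ to be attained by a real eigenvalue $\lambda$ with $\Re\lambda=s(\A)$ and $|\arg\lambda|=0<\alpha\pi/2$, Proposition \ref{prop:instab} then supplies the exponential resolvent bounds \eqref{Sa:instab}--\eqref{Pa:instab}, and the abstract Theorem \ref{thm:instab} (with the $C^2$ Taylor remainder satisfying \eqref{f:flat:instab}, $\eta=1$) gives nonlinear instability. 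Your proposal never addresses why the unstable eigenvalue controls the growth of $S_\alpha(t)$ and $P_\alpha(t)$, which is precisely the point where the symmetry of $\A$ (or some substitute) must enter; citing Theorem \ref{thm:stab:FRD} does not cover it.

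The second issue is in your re-derivation of the spectral input, which the paper simply cites from \cite{CH78,M79}. Your passage from $\mu_1\ge 0$ to $\mu_1>0$ is not sound as written: the claim that $II$ must be positive on a set of positive measure in $\partial\Omega$ whenever $\bar v$ is non-constant (``else $\Omega$ would fail to be a bounded convex body'') is false for convex domains with $C^2$ boundary, which may contain flat boundary portions where $II\equiv 0$, and ``unique continuation'' by itself does not close the argument. Note also that each individual Rayleigh quotient with $\phi_i=\partial_{x_i}\bar v$ need not be nonnegative; only the sum of the numerators has a sign, so one must pass to the quotient of sums before concluding $\mu_1\ge 0$. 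The standard way to get strictness is: if $\mu_1=0$, then every nonzero $\phi_i$ attains the supremum, hence is a principal Neumann eigenfunction, which is simple and strictly positive on $\overline\Omega$ up to sign; this forces $\nabla\bar v$ to be a nowhere-vanishing multiple of a fixed direction, contradicting $\nu\cdot\nabla\bar v=0$ on the boundary of a bounded domain. Either repair the strictness argument along these lines or, as the paper does, quote the classical result and spend the proof on verifying Assumption \ref{ass:instab}.
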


\begin{proof}
    Let $V=V(x)$ be a non-constant stationary solution of problem \eqref{eq: FRDS 1D}. Introducing the new variable 
    $u=v-V$ we can rewrite problem \eqref{eq: FRDS 1D} in the form
     \begin{align}\label{eq: FRDS 1D:lin}
                     &\partial^\alpha_t u= \Delta u + g'(V)u + f(u), & x\in \Omega, \quad t>0,\\
                   &\nu\cdot  \nabla u =0, & x \in \partial\Omega, \quad t>0,
            \end{align}
    where $f=f(u)$ is a suitable Taylor reminder satisfying assumption \eqref{f:flat:instab} with $\eta=1$.
    It is proved in References \cite{CH78,M79} by using a variational argument that the operator 
        $$\A u=\Delta u + g'(V)u,$$ 
    subject to the Neumann boundary condition has a positive eigenvalue when $V(x)$ is a non-constant stationary solution. Since this is a symmetric operator, its spectrum is real. Thus, by Proposition \ref{prop:instab}, the operator $\A$ satisfies Assumption \ref{ass:instab} 
    and Theorem~\ref{thm:instab} implies the instability of the zero solution of problem \eqref{eq: FRDS 1D:lin}.
\end{proof}


\end{document}